\newtheorem{theorem}{Theorem}[section]
\newtheorem{lemma}[theorem]{Lemma}
\newtheorem{remark}[theorem]{Remark}
\newtheorem{example}{Example}
\newcommand{\nv}{\vec{n}} 
\newcommand{\ia}{\textsf{i}}
\def\XXint#1#2#3{{\setbox0=\hbox{$#1{#2#3}{\int}$ }
		\vcenter{\hbox{$#2#3$ }}\kern-.6\wd0}}
\renewcommand{\ge}{\geqslant}
\renewcommand{\le}{\leqslant}
\renewcommand{\lrs}[3]{(l_{#1}(\mathbb{Z}))^{#2\times #3}} 
\newcommand{\LpI}[1]{L^{#1}(\mathcal{I})}
\newcommand{\LpO}[1]{L^{#1}(\Omega)}
\newcommand{\blf}{B} 
\newcommand{\mm}{m} 
\newcommand{\SSupp}{\text{Supp}}
\numberwithin{equation}{section}
\numberwithin{example}{section}
\begin{document}

\title[Galerkin Scheme Using Wavelets for Elliptic Interface Problems]{Galerkin Scheme Using Biorthogonal Wavelets on Intervals for Elliptic Interface Problems}

	\author{Bin Han}
	\address{Department of Mathematical and Statistical Sciences, University of Alberta, Edmonton, Alberta, Canada T6G 2N8.}
	 \email{bhan@ualberta.ca, mmichell@ualberta.ca}
	
	\author{Michelle Michelle}
	
	\thanks{Research supported in part by
		Natural Sciences and Engineering Research Council (NSERC) of Canada under grants RGPIN-2024-04991, RGPIN-2026-05814, and DGECR-2026-00371, the University of Alberta startup grant, and the Digital Research Alliance of Canada.}


	\makeatletter \@addtoreset{equation}{section} \makeatother

\begin{abstract}
This paper presents a wavelet Galerkin method for solving elliptic interface problems of the form $-\nabla\cdot(a\nabla u)=f$ in $\Omega\backslash \Gamma$, where $\Gamma$ is a smooth interface within $\Omega$. Since the scalar variable coefficient $a>0$ and source term $f$ are often discontinuous across $\Gamma$, the solution $u$ typically has discontinuous gradient $\nabla u$ across $\Gamma$ and hence $u\not\in H^{1.5}(\Omega)$, posing significant challenges for traditional numerical methods. By utilizing a compactly supported biorthogonal wavelet for $H^1_0(\Omega)$, we develop a strategy that incorporates additional wavelet elements (or basis functions) along the interface to resolve the complex geometry of the interface $\Gamma$ and the resulting gradient discontinuities. For the two-dimensional (2D) elliptic interface problem, the proposed method achieves near-optimal convergence rates: $\bo(h |\log(h)|)$ in the $H^1(\Omega)$-norm and $\bo(h^2 |\log(h)|^2)$ in the $\LpO{2}$-norm with respect to the approximation order. A key theoretical contribution is the use of the dual biorthogonal wavelet basis to establish the $H^1(\Omega)$ convergence results. This is supported by the development of weighted Bessel properties for wavelets and several inequalities in fractional Sobolev spaces. To maintain high accuracy and robustness against high-contrast coefficients, our method leverages an augmented set of wavelet elements, similar to meshfree approaches, thereby eliminating the need for the complex re-meshing required by finite element methods. Unlike existing techniques, this wavelet Riesz basis framework captures the geometry of $\Gamma$ seamlessly while ensuring that the condition numbers of the coefficient matrices remain small and uniformly bounded, independent of the problem size.
\end{abstract}

	\keywords{Elliptic interface problems, tensor product wavelets in Sobolev spaces, spline biorthogonal wavelets, Bessel property, fractional Sobolev spaces, meshfree methods}
	\subjclass[2020]{35J15, 65T60, 42C40, 41A15}
	\maketitle
	
	\pagenumbering{arabic}
	
	\section{Introduction and Motivations}\label{sec:intro}
In this paper, we
are interested in solving elliptic interface problems.
Such problems have been seen in many real-world applications of science and engineering such as
modeling of underground waste disposal,
mechanics of composite materials, oil reservoir simulations and other fluid flows through porous media.
Let $\Omega\subset \R^d$ be a bounded domain. Consider a smooth $(d-1)$-dimensional interface $\Gamma$ inside $\Omega$ such that it splits the whole domain $\Omega$ into two subdomains $\Omega_+$ and $\Omega_-$.
Throughout the paper, for a function $v$ in $\Omega$, we define $v_+:=v\chi_{\Omega_+}$, $v_-:=v\chi_{\Omega_-}$, and
\[
\lb v \rb(x)
:= \lim_{h \rightarrow 0^{+}} v_+(x+h \nv) - \lim_{h \rightarrow 0^{-}} v_-(x-h \nv), \quad x\in \Gamma,
\]
where $\nv$ is the unit normal vector of $\Gamma$ pointing into the subregion $\Omega_+$. The $d$-dimensional (dD) elliptic interface problem on the domain $\Omega$ considers
	%
		\begin{subnumcases} {\label{model0}}
		- \nabla \cdot (a \nabla u) = f
& \text{in \quad $\Omega \setminus \Gamma$}, \label{pde}\\
		\lb u\rb=g \quad & \text{on \quad $\Gamma$}, \label{jump1}\\
\lb a \nabla u \cdot \nv\rb  = g_\Gamma & \text{on \quad $\Gamma$}, \label{jump2}\\
		u = g_b & \text{on \quad $\partial \Omega$}, \label{boundary}
		\end{subnumcases}
	%
where the variable diffusion coefficient $a \in L^{\infty}(\Omega)$ satisfies $\text{ess-inf}_{x\in \Omega}(a(x)) > 0$, the source term $f\in L^{2}(\Omega)$,  the jump functions $g \in H^{1/2}(\Gamma)$ and $g_{\Gamma} \in H^{-1/2}(\Gamma)$,
	and the boundary condition $g_b\in H^{1/2}(\partial \Omega)$. Note that $\lb u\rb=g$ is the first jump condition \eqref{jump1} for possible discontinuity of $u$ across $\Gamma$, while $\lb a \nabla u \cdot \nv\rb=g_\Gamma$ is the second jump condition \eqref{jump2} for possible discontinuity of the flux across $\Gamma$. Hence, the solution $u$ often has singularities across $\Gamma$ and hence $u$ has low regularity.

Following the standard approach in finite element methods (FEMs), one often assumes $g=0$ in \eqref{jump1} and $g_b=0$ in \eqref{boundary}, which can be achieved by using auxiliary functions, see \cref{sec:exp} for details.
One can observe that the model problem \eqref{model0} for the case $g=0$ on $\Gamma$ is equivalent to
\begin{equation}\label{model}
\begin{cases}
- \nabla \cdot (a \nabla u) = f-g_\Gamma \delta_\Gamma
& \text{in \quad $\Omega$},\\
u = g_b & \text{on \quad $\partial \Omega$},
\end{cases}
\end{equation}
where $g_\Gamma \delta_\Gamma$ is the Dirac function along the interface $\Gamma$ with weight $g_\Gamma$.
Then the weak formulation of the model problem \eqref{model} with $g_b=0$ seeks $u \in H^{1}_0(\Omega)$ such that
	\be \label{weak}
	\blf(u,v):=\langle a \nabla u, \nabla v\rangle_{\Omega} = \langle f, v\rangle_{\Omega} - \langle g_{\Gamma}, v \rangle_{\Gamma}, \quad \forall  v \in H^{1}_0(\Omega),
	\ee
where the Sobolev space
$H^{1}_0(\Omega):=\{u\in H^{1}(\Omega)\; :\; u=0 \text{ on } \partial \Omega\}$.

The existence and uniqueness of a weak solution $u\in H^1(\Omega)$ to the model problem \eqref{model}, under the assumption that $u_+\in H^2(\Omega_+)$ and $u_-\in H^2(\Omega_-)$, have been extensively addressed in \cite[Sections 16 and 17 of Chapter~3]{LUbook}.
In most literature on elliptic interface problems, rectangular domains $\Omega$ are considered, the 2D setting is more common than the 3D setting, and both the diffusion coefficient $a$ and the source term $f$ are assumed to be smooth in the subdomains $\Omega_+$ and $\Omega_-$. However, both $a$ and $f$ can be discontinuous across the interface $\Gamma$. As a result, even with $g_{\Gamma}=0$ in \eqref{model}, due to the discontinuity of the diffusion coefficient $a$ across $\Gamma$, the solution $u$ to the problem \eqref{model} typically has a discontinuous gradient across the interface, which results in $u\notin H^{1.5}(\Omega)$, even though $u_+ \in H^{2}(\Omega_+)$ and $u_- \in H^{2}(\Omega_-)$. Specifically, the gradient $\nabla u$ must be discontinuous across the interface $\Gamma$ if the diffusion coefficient $a$ is discontinuous across $\Gamma$, the source term $f$ is discontinuous across $\Gamma$ (even if $a=1$), or $g_{\Gamma}$ is not identically zero.
Consequently, the solution $u$ to \eqref{model} has singularities near the interface $\Gamma$ and $u\not \in H^{1.5}(\Omega)$,
leading to deteriorated convergence rates of at most $0.5$ in the $H^1(\Omega)$-norm for classical numerical methods.

\subsection{Wavelet method and the central question}

Wavelets have been used for solving various PDEs (e.g., see \cite{Cer19, cohbook, dah97, cdd01, dku99, HM19, HM21a, HM23, HM26} and references therein). To the best of our knowledge, no studies on wavelet methods specifically address elliptic interface problems in \eqref{model} and develop a framework tailored to them for handling the low regularity of their solutions.
Due to the ability of wavelets to capture singularities, we regard wavelets as an attractive tool for solving such problems. 

In a traditional wavelet method for solving PDEs, one fixes the scale level, which then determines the number of basis functions through scaling and shifting of wavelets in the approximation. In fact, these wavelet basis functions span the same space as FEM basis functions. Hence, traditional wavelet methods are almost identical to FEMs, except that they use wavelet bases and possess uniformly bounded condition numbers. Due to its strong resemblance with FEMs, standard wavelet methods also suffer from reduced convergence rates when applied to the elliptic interface problem \eqref{model}.

A celebrated work in wavelets is the adaptive wavelet method for numerically solving general PDEs, introduced in \cite{cdd01}, which we now discuss in the context of \eqref{model}. Let $n$ be adaptively selected wavelet elements or basis functions. According to \cite[Section~3.1 and Proposition~3.2]{cdd01}, given that the solution $u$ is in the Besov space $B^{sd+1}_\tau (L^\tau(\Omega))$ with $\frac{1}{\tau}=s+\frac{1}{2}$, the adaptive wavelet method converges at the rate of $\bo(n^{-s})$ in the $H^1(\Omega)$-norm. That is, if $n=\bo(h^{-d})$ with $h$ being the mesh size, the convergence rate is $\bo(h^{sd})$. Due to the piecewise regularity of the solution $u$ of \eqref{model} and the discontinuity of $\nabla u$ across the interface, we have $u \in H^{1.5-\delta}(\Omega)$ for every $\delta>0$. To obtain the Besov regularity using the global Sobolev regularity of $u$, we need $sd+1 < 1.5 - \delta$. The embedding result gives $H^{3/2-\delta}(\Omega) = B^{3/2-\delta}_2(L^2(\Omega)) \hookrightarrow B^{1+sd}_\tau (L^\tau(\Omega))$, where $\tfrac{1}{\tau} = s+\tfrac{1}{2}$. This means that the adaptive wavelet method in \cite{cdd01} has a theoretically guaranteed convergence rate of $\bo(n^{-1/(2d)+\mu})$ for any fixed constant $0 < \mu < \tfrac{1}{2d}$ in the $H^1(\Omega)$-norm; equivalently, since $n=\bo(h^{-d})$, this rate is $\bo(h^{0.5-\gep})$ in the $H^1(\Omega)$-norm for any fixed constant $\gep:=d\mu>0$.

This naturally raises the question of
\textbf{\emph{whether we can design a biorthogonal wavelet method such that the convergence rate is higher than $0.5$ in the $H^1(\Omega)$-norm for the elliptic interface problem \eqref{model}.}}
We already answered this question in \cite{HM26} for the elliptic interface problem \eqref{model} with $d=1$. For convenience, we give a summary of the 1D result here. Suppose $u_{\pm}\in H^{m}(\Omega_{\pm})$ for an integer $m\ge 2$ and let us take any compactly supported biorthogonal wavelet on $\Omega$ having the approximation order $m$. Next, we suitably form wavelet elements $\mathcal{B}^{S,H^1_0(\Omega)}_{J_0,J}$ in \cite[(2.4)]{HM26} for all $J\ge J_0$ with cardinality $\bo(2^J)$.
Then, \cite[Theorem~2.1]{HM26} theoretically proves that the numerical solution $u_h$ satisfies
\begin{equation}\label{1D}
\|u-u_h\|_{H^1(\Omega)}=\bo(h^{m-1}),\qquad
\|u-u_h\|_{L^2(\Omega)}=\bo(h^m)
\end{equation}
with $h:=2^{-J}$ for all $J\ge J_0$.
For $d=2$, under the widely adopted assumption
$u_{\pm}\in H^2(\Omega_{\pm})$, this paper theoretically proves and numerically verifies that a suitably designed biorthogonal wavelet method can indeed achieve a near-optimal convergence rate $\bo(h|\log(h)|)$ in the $H^1(\Omega)$-norm and $\bo(h^2|\log(h)|^2)$ in the $L^2(\Omega)$-norm.
The logarithm factor comes from the fact that $u\in H^{1.5-\gep}(\Omega)$ for all $\gep>0$ but $u\not \in H^{1.5}(\Omega)$. Consequently, we obtain the order $\bo(h|\log(h)|)$ by exploiting the limit $\gep \to 0^+$ together with several inequalities on fractional Sobolev spaces in our proof.

\subsection{Related work on FDMs and FEMs} \label{subsec:FEMs}

We now review the literature on finite difference methods (FDMs) and finite element methods (FEMs) for the elliptic interface problems.
Various FDMs for solving the model problem \eqref{model0} have been studied in the literature \cite{FZ20, FHM23, FHM22, FHM24, LL94, LI06, ZZFW06} and many references therein. One way is to use the immersed interface method introduced in \cite{LL94}, whose later developments were discussed in \cite{LI06} and other papers. Another way is to use the matched interface and boundary method \cite{FZ20, ZZFW06} and further developments. More recently, a sixth-order hybrid FDM for the elliptic interface problem on a rectangular domain with mixed boundary conditions was developed in \cite{FHM24}. Under the assumption $u_{\pm}\in C^m(\overline{\Omega_{\pm}})$ with sufficiently large $m$ for FDMs, a key idea common to these studies is that they modify the finite difference stencils according to information near the interface. But the convergence proofs of these FDM methods are often not known in the literature, due to very complicated expressions and structures of modified stencils near the interface.

As mentioned earlier, wavelet methods are closely related to FEMs. Under the widely adopted assumption $u_{\pm}\in H^2(\Omega_{\pm})$ for FEMs, much of the literature on FEMs aims to develop second-order schemes to restore the optimal first-order convergence in the $H^1(\Omega)$-norm by modifying FEM elements near the interface.
One way is to use the body-fitted FEM with its mesh generated depending on the shape of the interface and the boundary of the domain \cite{CWW17,CZ98}. This can be challenging especially when the interface has a complicated geometry, and more so for time-dependent problems \cite{ABGL23, ZCBB22}. There is also a large class of FEMs that do not necessitate a mesh generation that conforms to the interface, which is called unfitted FEMs. Some methods that fall into this category are the immersed FEM (IFEM) \cite{ABGL23, GLL08, GL19, JWCL22, LLW03, LLZ15}, the CutFEM \cite{BCHLM15, HH02}, the extended FEM (XFEM) \cite{BBK17,KBB16,VSC06,ZB20,ZCBB22}, and the unfitted high-order $hp$ method \cite{bcde21, BE18,CL23,CLX21,M12}. There are far fewer studies on high-order methods, and they typically require a stronger smoothness assumption on each subdomain. After fixing a mesh that is independent of the interface, the IFEM proceeds by modifying shape functions of interface elements \cite{GL19}. As a recent development in the IFEM, a high-order method that addresses nonhomogeneous (first and second) jump conditions and achieves optimal convergence was studied in \cite{ABGL23}. Instead of modifying the shape functions near the interface, one can still choose to use the standard FEM shape functions, but employ the Nitsche's penalty along the interface \cite{GL19}. This is a key idea of the CutFEM, which was first studied by \cite{HH02} and then reviewed in \cite{BCHLM15}. A related method using penalties is the discontinuous Galerkin method for elliptic interface problems \cite{BE18,CYZ11,M12}.
The XFEM incorporates special basis functions near the interface in the approximation space to recover the optimal convergence rate \cite{BBK17,KBB16,ZB20,ZCBB22}. The shape functions in the XFEM are all continuous, and thus the method is conforming. Also,
unlike other FEMs that use discontinuous/nonconforming elements near the interface $\Gamma$, no penalties are used by the XFEM. The downside is that it may lead to ill-conditioning of the linear system. However, there are further studies that deal with the stabilization for such a method so that the conditioning behaves like the standard FEM \cite{BBK17,KBB16,ZB20}. Some studies that assume variable piecewise coefficients $a$ are \cite{BBK17,CWW17,GLL08,JWCL22,KBB16,M12}, whereas the other previously mentioned studies assume piecewise constant coefficients $a$.

	\subsection{Main contributions of this paper}
	We present a new wavelet Galerkin method based on the tensor product of compactly supported biorthogonal wavelets on intervals for the elliptic interface problem \eqref{model}. For $d=2$, our method achieves convergence rates  $\bo(h |\log(h)|)$ in the $H^1(\Omega)$-norm and $\bo(h^2 |\log(h)|^2)$ in the $\LpO{2}$-norm, as stated in \cref{thm:converg}. These rates get arbitrarily close to
first-order in the $H^1(\Omega)$-norm and
second-order in the $\LpO{2}$-norm as the scale level increases. Our proof extensively uses the dual part of the biorthogonal wavelets, relies on the weighted Bessel property and results on wavelets in fractional Sobolev spaces, and employs FEM arguments. Standard techniques in the literature are far from sufficient to establish the near-optimal convergence rates of our method, which has to handle the complex interface geometry and capture singularities of the exact solution $u$ with low regularity near the interface $\Gamma$. Our approach contrasts sharply with FEMs and traditional wavelet methods which critically rely on the polynomial approximation and Bramble-Hilbert lemma.
	
	At the same time, this paper complements and contrasts with our earlier work \cite{HM26}, where we analyzed the same scheme applied to the 1D elliptic interface problem. In that case, the method can achieve arbitrarily high convergence rates, largely because the interface $\Gamma$ is a single point and because of special results for Sobolev spaces. Moreover, the $\log(h)$ factors for the 2D case in this paper do not appear in \cite[Theorem~2.1]{HM26} (see \eqref{1D}), largely because we used the Sobolev embedding result $\|\nabla u\|_{L^\infty(\Omega_{\pm})}\le C\|u\|_{H^2(\Omega_{\pm})}$, which only holds for $d=1$. 
Hence, the convergence proof for the 2D case here is much more challenging than the 1D case.

	We shall  discuss in \Cref{subsec:method}
how the analysis extends to higher dimensions $d > 2$, even though the main theorem and numerical results in this paper concern the 2D elliptic interface problem in \eqref{model}. For $d\ge 2$, we informally argue in \Cref{subsec:method} that our method has a convergence rate of $\frac{d}{2(d-1)}-\gep$ with any $\gep>0$ in the $H^{1}(\Omega)$-norm. Though the convergence rate $\frac{d}{2(d-1)}-\gep$ is always greater than $1/2$ for sufficiently small $\gep$, it is strictly less than $1$ for $d>2$, for example, $\frac{d}{2(d-1)}=0.75$ for $d=3$.
Hence, we do not address the full theoretical and numerical details for $d>2$ in this paper. Important issues such as how to modify our method to regain a first-order convergence rate in the $H^{1}(\Omega)$-norm for $d\ge 3$ and how to develop high-order versions of our method are left for future work.

	To achieve a convergence rate higher than $0.5$ in the $H^{1}(\Omega)$-norm for the 2D elliptic interface problem, our method adds extra wavelet basis functions/elements from higher scale levels whose supports intersect the interface to the approximation. The resulting method has the following advantages:
	\begin{itemize}
		\item \textit{Simplicity.} Complex interface geometries (including those with high curvature), high-contrast variable coefficients $a$, and singularities along the interface are simultaneously handled by methodically incorporating wavelet basis functions from higher scale levels. As long as they touch the interface $\Gamma$, they are included in the approximation.
We do not modify wavelet elements near the interface, contrary to existing approaches based on FEMs or FDMs.
		\item \textit{Meshfree, uniformly bounded condition numbers, and robustness under numerical perturbations.} Our method preserves these known core wavelet advantages \cite{cdd01,DK92}. Being meshfree, it eliminates the need for domain re-meshing; accuracy is instead improved by increasing the wavelet scale level. Furthermore, because the biorthogonal wavelets form a Riesz basis for \(H^1_0(\Omega)\), condition numbers remain uniformly bounded, specifically by \(C_w \|a\|_{L^\infty}\|a^{-1}\|_{L^\infty}\) in \cref{thm:converg}. Meanwhile, the condition numbers for FEMs grow like \(h^{-2}\). Since the smallest singular values of wavelet coefficient matrices are bounded away from zero, iterative solvers tend to converge more rapidly. The Riesz basis property also allows us to show that the convergence rates remain valid when the interface curve approximation and quadrature errors are at most of the same order as the discretization errors. 

		\item \textit{Conforming.} All wavelet elements are continuous and belong to the Sobolev space $H^1_0(\Omega)$, same as the XFEM \cite{BBK17,KBB16,ZB20,ZCBB22}. The conforming nature of our method also differs from many studies cited in \cref{subsec:FEMs}, where elements near the interface may be discontinuous. We do not use the distance function $\mbox{dist}(x):=\inf_{y\in \Gamma} \|x-y\|$ for $x\in \Omega$ as in the XFEM to construct modified finite elements near $\Gamma$, which can be computationally expensive.
		\item \textit{Fixed constant-factor increase in the number of degrees of freedom.} The added wavelet elements have scale levels that are at most double the maximum scale level of the other regular basis functions. Hence, the number of terms used in the approximation with these extra functions is only a fixed constant multiple of the number of terms without them. This constant only depends on the interface shape and the support of the wavelets.
		\item \textit{Efficient implementation.} Thanks to the refinability properties of the wavelet basis and the fast wavelet transform, the implementation can be carried out efficiently.
	\end{itemize}

	\subsection{Organization of this paper}
	In \cref{sec:method}, we revisit some basic concepts and definitions of wavelets, present our method for solving \eqref{model}, and state our main result, \cref{thm:converg}, on convergence rates and uniform boundedness of condition numbers. 
	We also discuss how the analysis extends to higher dimensions. In \cref{sec:exp}, we discuss the setup of the numerical experiments, describe how we handle nonhomogeneous first jump conditions and/or Dirichlet boundary conditions, and present the numerical results of our method applied to \eqref{model0}-\eqref{model}. Finally, we provide the proofs in \cref{sec:proof}.

	\section{Wavelet Galerkin Method for the Model Problem \eqref{model}}	 \label{sec:method}

In this section, we describe our Galerkin scheme using the tensor product of biorthogonal wavelets on the unit interval $(0,1)$ for solving the $d$D elliptic interface problem in \eqref{model}. As usual in FEMs or traditional wavelet methods, the implementation of our Galerkin scheme solely relies on the primal part of the biorthogonal wavelets. However, the dual part of the biorthogonal wavelets will play a key role for proving theoretical convergence of our proposed method.


	\subsection{Preliminaries on wavelet bases in $\Lp{2}$ and $\LpI{2}$ with $\mathcal{I}:=(0,1)$} \label{sec:prelim}

Let us review some basic concepts of wavelets, which follow a similar presentation as in \cite{HM23}. Let $\phi:=\{\phi^{1}, \dots, \phi^{r}\}^{\mathsf{T}}$ and $\psi:=\{\psi^{1}, \dots, \psi^{s}\}^{\mathsf{T}}$ be square integrable functions in $\Lp{2}$. Define a wavelet affine system by
		 \begin{equation}\label{Bphipsi}
			 \AS_{J_0}(\phi;\psi):=  \left\{\phi^{\ell}_{J_0;k}: k \in \mathbb{Z},  \ell =1,\ldots,r \right\}
			\cup \left\{\psi^{\ell}_{j;k}: j \ge J_0, k \in \mathbb{Z}, \ell=1,\ldots, s \right\},
		\end{equation}
		where $J_0\in \Z$, $\phi^{\ell}_{J_0;k}:=2^{J_0/2} \phi^{\ell}(2^{J_0}\cdot-k)$, and $\psi^{\ell}_{j;k}:=2^{j/2} \psi^{\ell}(2^{j}\cdot-k)$.
		
		In the literature, when $r,s>1$, such a system is also referred to as a multiwavelet affine system. On the other hand, if $r=s=1$, then such a system is often referred to as a scalar wavelet affine system. Throughout this paper, we refer to multiwavelets and scalar wavelets simply as wavelets, since our analysis and discussion apply to both cases.
		
		We say that
		$\AS_{J_0}(\phi;\psi)$ is \emph{a Riesz basis for $\Lp{2}$} if
		(1) the linear span of $\AS_{J_0}(\phi;\psi)$ is dense in $\Lp{2}$, and (2) there exist positive constants $C_{1}, C_{2} >0$ such that
		 \begin{equation}\label{Riesz:L2}
			C_1 \sum_{\eta \in \AS_{J_0}(\phi;\psi)} |c_{\eta}|^2 \le \Big\| \sum_{\eta \in \AS_{J_0}(\phi;\psi)} c_{\eta} \eta \Big\|^2_{L^2(\R)} \le C_2 \sum_{\eta \in \AS_{J_0}(\phi;\psi)} |c_{\eta}|^2
		\end{equation}
		for all finitely supported sequences $\{c_\eta\}_{\eta \in \AS_{J_0}(\phi;\psi)}$. The relation in \eqref{Riesz:L2} holds for some $J_0\in \Z$ if and only if it holds for all $J_0\in \Z$ with identical positive constants $C_1$ and $C_2$ (see for example \cite[Theorem~6]{han12}). As a result, we simply refer to $\{\phi;\psi\}$ as \emph{a Riesz wavelet in $\Lp{2}$} if $\AS_{0}(\phi;\psi)$ is a Riesz basis for $\Lp{2}$.
		Further, let $\tilde{\phi}:=\{\tilde{\phi}^{1}, \dots, \tilde{\phi}^{r}\}^{\mathsf{T}}$ and $\tilde{\psi}:=\{\tilde{\psi}^{1}, \dots, \tilde{\psi}^{s}\}^{\mathsf{T}}$ be vectors of square integrable functions in $\Lp{2}$. We say that $(\{\tilde{\phi};\tilde{\psi}\},\{\phi;\psi\})$ is \emph{a biorthogonal wavelet} in $\Lp{2}$ if $(\AS_{0}(\tilde{\phi};\tilde{\psi}), \AS_{0}(\phi;\psi))$ is a biorthogonal basis in $\Lp{2}$, i.e.,
		(1) $\AS_{0}(\tilde{\phi};\tilde{\psi})$ and $\AS_{0}(\phi;\psi)$ are Riesz bases in $\Lp{2}$, and (2) $\AS_{0}(\tilde{\phi};\tilde{\psi})$ and $\AS_{0}(\phi;\psi)$ are biorthogonal to each other in $\Lp{2}$.
		
		The wavelet function $\psi$ has \emph{$m$ vanishing moments} if $\int_\R x^j \psi(x)dx=0$ for all $j=0,\ldots,m-1$. By convention, we define $\vmo(\psi):=m$ with $m$ being the largest of such an integer.
		
		The Fourier transform is defined by $\wh{f}(\xi):=\int_\R f(x) e^{-\ia x\xi} d x, \xi\in \R$ for $f\in \Lp{1}$ and is naturally extended to square integrable functions in $\Lp{2}$. Meanwhile, the Fourier series of $u=\{u(k)\}_{k\in \Z}\in \lrs{0}{r}{s}$ is defined by $\wh{u}(\xi):=\sum_{k\in \Z} u(k) e^{- \ia k\xi}$ for $\xi\in \R$, which is an $r\times s$ matrix of $2\pi$-periodic trigonometric polynomials. By $\td$ we denote the sequence such that $\td(0)=1$ and $\td(k)=0$ if $k\ne 0$.
		
		Now, we are ready to recall a key result of biorthogonal wavelets in $\Lp{2}$.
		
		\begin{theorem} \label{thm:bw} (\cite[Theorem~6.4.6]{hanbook} and \cite[Theorem~7]{han12})
			Let $\phi,\tilde{\phi}$ be $r\times 1$ vectors of compactly supported distributions on $\R$ and $\psi,\tilde{\psi}$ be $s\times 1$ vectors of compactly supported distributions on $\R$. Then $(\{\tilde{\phi};\tilde{\psi}\},\{\phi;\psi\})$ is a biorthogonal wavelet in $\Lp{2}$ if and only if the following are satisfied
			\begin{enumerate}
				\item[(1)] $\phi,\tilde{\phi}\in (\Lp{2})^r$ and $\ol{\wh{\phi}(0)}^\tp \wh{\tilde{\phi}}(0)=1$.
				\item[(2)] $\phi$ and $\tilde{\phi}$ are biorthogonal to each other: $\la \tilde{\phi},\phi(\cdot-k)\ra:=\int_{\R} \tilde{\phi}(x) \overline{\phi(x-k)}^\tp dx
= \td(k) I_r$ $\forall k\in \Z$.
				\item[(3)] There exist low-pass filters $a,\tilde{a}\in \lrs{0}{r}{r}$ and high-pass filters
				$b,\tilde{b}\in \lrs{0}{s}{r}$ such that
				\begin{align}
					 &\phi=2\sum_{k\in \Z} a(k)\phi(2\cdot-k),\qquad
					 \psi=2\sum_{k\in \Z} b(k)\phi(2\cdot-k), \label{phipsi}
					\\
					 &\tilde{\phi}=2\sum_{k\in \Z}\tilde{a}(k)
					 \tilde{\phi}(2\cdot-k),\qquad
					 \tilde{\psi}=2\sum_{k\in \Z} \tilde{b}(k)
					 \tilde{\phi}(2\cdot-k), \label{tphitpsi}
				\end{align}
				and $(\{\tilde{a};\tilde{b}\},\{a;b\})$ is a biorthogonal wavelet filter bank, i.e., $s=r$
				and
				\[
				\left [ \begin{matrix}
					 \wh{\tilde{a}}(\xi) &\wh{\tilde{a}}(\xi+\pi)\\
					 \wh{\tilde{b}}(\xi) &\wh{\tilde{b}}(\xi+\pi)
				 \end{matrix}\right]
				\left[ \begin{matrix}
					 \ol{\wh{a}(\xi)}^\tp &\ol{\wh{b}(\xi)}^\tp\\
					 \ol{\wh{a}(\xi+\pi)}^\tp &\ol{\wh{b}(\xi+\pi)}^\tp
				 \end{matrix}\right]
				=I_{2r}, \qquad \xi\in \R.
				\]
				\item[(4)] Every element in $\psi$ and $\tilde{\psi}$ has at least one vanishing moment, i.e., $\int_{\R} \psi(x)dx=\int_{\R} \tilde{\psi}(x) dx=0$.
			\end{enumerate}
		\end{theorem}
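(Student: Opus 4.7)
The plan is to establish the characterization by proving the two implications separately, with the refinement structure of shift-invariant spaces doing most of the work.

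\emph{Necessity.} Assuming $(\{\tilde{\phi};\tilde{\psi}\},\{\phi;\psi\})$ is a biorthogonal wavelet in $L^2(\mathbb{R})$, conditions (1) and (2) follow almost immediately: square integrability is part of the Riesz-basis hypothesis, level-zero biorthogonality is exactly (2), and the normalization $\overline{\widehat{\phi}(0)}^{\mathsf{T}}\widehat{\tilde{\phi}}(0)=1$ comes from evaluating the partition-of-unity identity induced by $\sum_{k\in\mathbb{Z}}\phi(\cdot-k)\overline{\tilde{\phi}(\cdot-k)}^{\mathsf{T}}$ at zero frequency via Poisson summation. For (3), I would set $V_0:=\overline{\operatorname{span}}\{\phi(\cdot-k):k\in\mathbb{Z}\}$, $V_1:=\overline{\operatorname{span}}\{\phi(2\cdot-k):k\in\mathbb{Z}\}$, and analogously $\tilde{V}_0,\tilde{V}_1$; since $\AS_0(\phi;\psi)$ is a Riesz basis in $L^2(\mathbb{R})$, its dense span forces $V_0\subset V_1$, and expanding $\phi$ in $\{\phi(2\cdot-k)\}$ biorthogonally against $\{\tilde{\phi}(2\cdot-k)\}$ yields the low-pass filter $a$ in \eqref{phipsi}. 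The same argument produces $b$, $\tilde{a}$, $\tilde{b}$, and the perfect-reconstruction matrix identity for $(\{\tilde{a};\tilde{b}\},\{a;b\})$ is then the Fourier-side restatement of the biorthogonality of $\AS_0(\tilde{\phi};\tilde{\psi})$ and $\AS_0(\phi;\psi)$ across scales $0$ and $1$. Finally, (4) follows by evaluating the off-diagonal block of the filter-bank identity at $\xi=0$ together with $\widehat{\phi}(0)\neq 0$ inherited from (1), which forces $\widehat{b}(0)=\widehat{\tilde{b}}(0)=0$, i.e., $\int \psi = \int \tilde{\psi}=0$.

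\emph{Sufficiency.} Conversely, assume (1)--(4). My plan is to verify, in this order, biorthogonality of the full affine systems, density of their span in $L^2(\mathbb{R})$, and the two-sided Riesz bounds \eqref{Riesz:L2}. Biorthogonality across all scales reduces by an inductive descent to level zero: the perfect-reconstruction identity in (3) combined with the refinement equations \eqref{phipsi}--\eqref{tphitpsi} lets me rewrite any inner product of the form $\langle \tilde{\psi}^\ell_{j;k},\psi^m_{j';k'}\rangle$ or $\langle \tilde{\psi}^\ell_{j;k},\phi^m_{J_0;n}\rangle$ as a sum of inner products at one coarser scale, iterating until reaching $\langle\tilde{\phi},\phi(\cdot-\ell)\rangle$ governed by (2). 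For density and the Riesz bounds, I would work in the Fourier domain and exploit the compact support of the filters to show that the biorthogonal multiresolutions $\{V_j\}$ and $\{\tilde{V}_j\}$ are dense in $L^2(\mathbb{R})$ and that every $f\in L^2(\mathbb{R})$ admits a telescoping expansion $f=P_{J_0}f+\sum_{j\ge J_0}(P_{j+1}-P_j)f$, where $P_j$ is the biorthogonal projector onto $V_j$, whose $L^2$-norm is equivalent to the $\ell^2$-norm of its wavelet coefficients.

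The main obstacle I anticipate lies in the sufficiency direction, specifically in the two-sided Riesz bound of \eqref{Riesz:L2}. Biorthogonality and density reduce to algebraic and Fourier-analytic manipulations of the filters, but the lower Riesz bound requires genuine stability of the refinable vectors $\phi$ and $\tilde{\phi}$, i.e., that their integer shifts form Riesz sequences in $L^2(\mathbb{R})$. To establish this, I would invoke the standard characterization of shift-stability via the spectral properties of the transition (transfer) operator associated with the low-pass filter $a$, and separately $\tilde{a}$; together with compact support and the normalization in (1), this ensures $\phi,\tilde{\phi}\in L^2(\mathbb{R})$ with stable shifts, after which the Riesz bounds for the full affine system $\AS_0(\phi;\psi)$ follow via Plancherel applied to the multiresolution identity. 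This step carries the heaviest technical load, which is why the statement of the theorem rests on the detailed treatment in \cite{hanbook,han12}.
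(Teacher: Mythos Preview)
The paper does not supply its own proof of this theorem; it is quoted as a known result from \cite[Theorem~6.4.6]{hanbook} and \cite[Theorem~7]{han12}, so there is nothing in the paper to compare your argument against. Your outline is broadly in line with how those references proceed.

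That said, one point in your sufficiency sketch is misdiagnosed. You flag the lower Riesz bound as the main obstacle and propose to obtain shift-stability of $\phi$ and $\tilde{\phi}$ via a transfer-operator analysis of the masks $a,\tilde{a}$. This is unnecessary: condition~(1) already places $\phi,\tilde{\phi}$ in $(L^2(\mathbb{R}))^r$, and the biorthogonality in~(2) then forces stable shifts directly, since $[\widehat{\tilde{\phi}},\widehat{\phi}]\equiv I_r$ together with the Cauchy--Schwarz inequality for bracket products bounds $[\widehat{\phi},\widehat{\phi}]$ and $[\widehat{\tilde{\phi}},\widehat{\tilde{\phi}}]$ away from zero (both are bounded above automatically, being trigonometric-polynomial matrices by compact support). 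The substantive work in the sufficiency direction lies elsewhere: (i) the Bessel (upper) bound for each affine system, which uses compact support and the vanishing moments in~(4), and (ii) completeness of the union of the multiresolution spaces, which uses the normalization $\overline{\widehat{\phi}(0)}^{\mathsf{T}}\widehat{\tilde{\phi}}(0)=1$ in~(1). Once both $\AS_0(\phi;\psi)$ and $\AS_0(\tilde{\phi};\tilde{\psi})$ are Bessel sequences, mutually biorthogonal, and have dense span, the lower Riesz bound for each follows automatically by duality, with no further spectral input required.
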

		
		To solve the elliptic interface problem \eqref{model}, we take the tensor product of wavelets on $\mathcal{I}:=(0,1)$. Without explicitly involving the dual, the direct approach presented in \cite{HM21a} allows us to construct all possible locally compactly supported biorthogonal wavelets in $\LpI{2}$ satisfying prescribed boundary conditions and maximum vanishing moments from any compactly supported biorthogonal wavelets in $\Lp{2}$. Moreover, we can avoid using more boundary wavelet basis functions than necessary and retain as many interior wavelet basis functions as possible. This consideration is essential for practical numerical implementation. Our direct approach produces a biorthogonal wavelet $(\tilde{\mathcal{B}}^{1D}_{J_0},\mathcal{B}^{1D}_{J_0})$ in $\LpI{2}$, where
		\[
		 \mathcal{B}^{1D}_{J_0}:=\Phi_{J_0}\cup \cup_{j=J_0}^\infty \Psi_j \subseteq \LpI{2},
		\qquad \tilde{\mathcal{B}}^{1D}_{J_0}:=\tilde{\Phi}_{J_0}\cup \cup_{j=J_0}^\infty \tilde{\Psi}_j \subseteq \LpI{2},
		\]
		the integer $J_0\in \N$ denotes the coarsest scale level, and
		\be \label{PhiPsij}
		\begin{aligned}
			&\Phi_{J_0} := \{\phi^{L}_{J_0;0}\}\cup
			\{\phi_{J_0;k} \setsp n_{l,\phi} \le k\le 2^{J_0}-n_{h,\phi}\}\cup \{ \phi^{R}_{J_0;2^{J_0}-1}\},\\
			&\Psi_{j} := \{\psi^{L}_{j;0}\}\cup
			\{\psi_{j;k} \setsp n_{l,\psi} \le k\le 2^{j}-n_{h,\psi}\}\cup \{ \psi^{R}_{j;2^{j}-1}\},\quad j\ge J_0,
		\end{aligned}
		\ee
		with $n_{l,\phi}, n_{h,\phi}, n_{l,\psi}, n_{h,\psi}$ being known integers, $\phi^L, \phi^R$ being boundary refinable functions, and $\psi^L, \psi^R$ being boundary wavelets that are finite subsets of functions in $\LpI{2}$.
Recall that $\psi_{j;k}:=2^{j/2}\psi(2^j\cdot-k)$.
We define $\tilde{\mathcal{B}}^{1D}_{J_0}$ the same way, except we add $\sim$ to each element in $\mathcal{B}^{1D}_{J_0}$ for a natural bijection.
		
	%
	To obtain $d$-dimensional biorthogonal wavelets in $\LpO{2}$ with $\Omega=(0,1)^d$, we use the tensor product of the one-dimensional biorthogonal wavelet in $\LpI{2}$. Given 1D functions $f_1,\ldots,f_d$, define $(f_1 \otimes \ldots \otimes f_d) (x_1,\ldots,x_d) := f_1(x_1) \ldots f_d(x_d)$ for $x_1,\ldots,x_d \in \R$. If $F_1,\ldots,F_d$ and $F$ are sets containing 1D functions, we define
	\begin{align*}
F_1 \otimes \cdots \otimes F_d & := \{f_1 \otimes \cdots \otimes f_d : f_i \in F_i, 1\le i \le d\},\qquad
\otimes^d F:=F \otimes \cdots \otimes F.
	\end{align*}
	Now, define
		\be \label{B2D}
		\mathcal{B}^{dD}_{J_0} := \Phi^{dD}_{J_0} \cup \cup_{j=J_0}^{\infty} \Psi^{dD}_{j},\qquad
		 \tilde{\mathcal{B}}^{dD}_{J_0} := \tilde{\Phi}^{dD}_{J_0} \cup \cup_{j=J_0}^{\infty} \tilde{\Psi}^{dD}_{j},
		\ee
		where
		\[
		\begin{aligned}
			& \Phi^{dD}_{J_0}:=\otimes^d \Phi_{J_0}, \quad \Psi^{dD}_{j}:= (\otimes^d \{\Phi_j ,\Psi_j\}) \backslash \Phi^{dD}_{j}, \quad
			& \tilde{\Phi}^{dD}_{J_0}:=\otimes^d  \tilde{\Phi}_{J_0}, \quad \tilde{\Psi}^{dD}_{j}:=(\otimes^d \{\tilde{\Phi}_j ,\tilde{\Psi}_j\}) \backslash \tilde{\Phi}^{dD}_{j},
		\end{aligned}
		\]
		and $\Phi_j, \Psi_j, \tilde{\Phi}_j, \tilde{\Psi}_j$ are defined as in \eqref{PhiPsij}. As a concrete example, we have the following sets for $d=2$
		\be \label{PhiPsi2D}
		\begin{aligned}
			& \Phi^{2D}_{J_0} =\Phi_{J_0}\otimes \Phi_{J_0}, \quad \Psi^{2D}_{j} =\{\Phi_j \otimes \Psi_j,  \Psi_{j}  \otimes \Phi_{j}, \Psi_j \otimes \Psi_j\}, \\
			& \tilde{\Phi}^{2D}_{J_0} =\tilde{\Phi}_{J_0}\otimes \tilde{\Phi}_{J_0}, \quad \tilde{\Psi}^{2D}_{j} =\{\tilde{\Phi}_j \otimes \tilde{\Psi}_j,  \tilde{\Psi}_{j}  \otimes \tilde{\Phi}_{j}, \tilde{\Psi}_j \otimes \tilde{\Psi}_j\}.
		\end{aligned}
		\ee
		By \cite[Theorem 1.2]{HM23}, if $\phi \in H^{1}(\R)$ and $\Phi_{J_0} \subseteq H^1_0(0,1)$, then $(\tilde{\mathcal{B}}^{dD}_{J_0},\mathcal{B}^{dD}_{J_0})$ is a biorthogonal wavelet in $\LpO{2}$ and its properly scaled version defined below 
		\be \label{BJH1}
		 \mathcal{B}^{H^1_0(\Omega)}_{J_0} := [2^{-J_0}\Phi^{dD}_{J_0}] \cup \cup_{j=J_0}^{\infty} [2^{-j}\Psi^{dD}_{j}]
		\ee
		is a Riesz basis of the Sobolev space $H^1_0(\Omega)$.
That is, (1) the linear span of $\mathcal{B}^{H^1_0(\Omega)}_{J_0}$ is dense in $H^1_0(\Omega)$, and (2) there exist positive constants $C_{\mathcal{B},1},C_{\mathcal{B},2}>0$ such that
		 \begin{equation}\label{stability}
			C_{\mathcal{B},1} \sum_{\eta \in \mathcal{B}^{H^1_0(\Omega)}_{J_0}} |c_{\eta}|^2 \le \Big\| \sum_{\eta \in \mathcal{B}^{H^1_0(\Omega)}_{J_0}} c_{\eta} \eta \Big\|^2_{H^{1}(\Omega)} \le C_{\mathcal{B},2} \sum_{\eta \in \mathcal{B}^{H^1_0(\Omega)}_{J_0}} |c_{\eta}|^2
		\end{equation}
for all sequences $\{c_\eta\}_{\eta\in \mathcal{B}^{H^1_0(\Omega)}_{J_0}}\in \ell^2$.
Furthermore, $(\tilde{\mathcal{B}}^{H^{-1}(\Omega)}_{J_0},\tilde{\mathcal{B}}^{H^{1}_0(\Omega)}_{J_0})$ forms a biorthogonal basis for the dual Sobolev spaces $(H^{-1}(\Omega),H^{1}_0(\Omega))$ (see \cite{hanbook} for the definition of such spaces) and any function $u \in H^1_0(\Omega)$ can be represented in terms of the following wavelet representation
		\be \label{u:repr}
		u = \sum_{\eta \in \mathcal{B}^{H^1_0(\Omega)}_{J_0}} c_\eta \eta
\qquad \mbox{with} \qquad
\{c_\eta:=\la u, \tilde{\eta}\ra\}_{\eta \in \mathcal{B}^{H^1_0(\Omega)}_{J_0}}
\in \ell^2
		\ee
		%
with the above series converging absolutely in $H^{1}_0(\Omega)$, where $\tilde{\eta}\in \tilde{\mathcal{B}}^{H^{-1}(\Omega)}_{J_0}$ is the corresponding element of $\eta\in \mathcal{B}^{H^1_0(\Omega)}_{J_0}$.
Indeed, there are many biorthogonal wavelet and scalar wavelet bases in $H^1_0(\Omega)$, which can be used for solving PDEs numerically (e.g., \cite[Section 7]{HM21a}, \cite[Section~3.2]{HM23} and \cite{Cer19,dku99}).

\subsection{Methodology for solving the model problem \eqref{model}}	 \label{subsec:method}
		
We first review the classical/standard wavelet method. Let $\Omega:=(0,1)^d$ with $d\in \N$.
%
For a given coarse level $J_0$ and for all $J\ge J_0$, we define the traditional finite-dimensional wavelet element space truncated at the scale level $J$ as follows:
\be \label{B2DS}
\mathcal{B}^{dD}_{J_0,J} := \Phi^{dD}_{J_0} \cup \cup_{j=J_0}^{J-1} \Psi^{dD}_{j}
\quad \mbox{and}\quad
\quad
\mathcal{B}^{H^1_0(\Omega)}_{J_0,J} := [2^{-J_0}\Phi^{dD}_{J_0}] \cup \cup_{j=J_0}^{J-1} [2^{-j} \Psi^{dD}_{j}].
\ee
Obviously, $\mathcal{B}^{H^1_0(\Omega)}_{J_0,J}$ is a finite subset of
$\mathcal{B}^{H^1_0(\Omega)}_{J_0}$,
where $\mathcal{B}^{H^1_0(\Omega)}_{J_0}$ is defined as in \eqref{BJH1}. Using a uniform grid,
the standard FEM only uses the basis $\Phi^{dD}_{J}$ and its finite element space $V_{J}:=\mbox{span}( \Phi^{dD}_{J})$.
It is very important to notice that
$\mbox{span}(\mathcal{B}^{dD}_{J_0,J}) = V_{J}$ and
$\mbox{span}(\mathcal{B}^{H^1_0(\Omega)}_{J_0,J}) = V_{J}$. In other words, both $\mathcal{B}^{H^1_0(\Omega)}_{J_0,J}$ (or $\mathcal{B}^{dD}_{J_0,J}$) and $\Phi^{dD}_{J}$ span the same (finite element) space $V_{J}$. The numerical solution to \eqref{model} obtained by the traditional wavelet Galerkin method using only $\mathcal{B}^{dD}_{J_0,J}$ is the same as the solution obtained by using $\Phi^{dD}_{J}$ in FEMs. In the context of the model problem \eqref{model}, using only the traditional wavelet basis $\mathcal{B}^{H^1_0(\Omega)}_{J_0,J}$ inevitably suffers from the same convergence issue faced in the standard FEM. More specifically, the observed convergence rate will typically be at most $0.5$ in the $H^1(\Omega)$-norm because the solution $u\not\in H^{1.5}(\Omega)$ and has discontinuous gradients across the interface $\Gamma$.

We now describe our proposed method.
Let $\SSupp(\tilde{\eta})$ be the smallest closed interval such that the function $\tilde{\eta}$ vanishes outside it. To address the issue of discontinuous gradients $\nabla u$, we incorporate higher-resolution wavelets defined at the scale level $j$ by
\be \label{Sj}
\mathcal{S}_{j} := \{\eta \in \Psi^{dD}_{j} \; : \; \SSupp(\tilde{\eta})  \cap \Gamma \neq \emptyset \;\; \text{and} \;\; \tilde{\eta} \in \tilde{\Psi}^{dD}_{j}\}, \qquad j\in \N
\ee
in addition to the standard wavelet elements $\mathcal{B}^{dD}_{J_0,J}$. 
Define $\#\mathcal{S}_j$ to be the cardinality of $\mathcal{S}_j$.
Because the interface $\Gamma$ is $(d-1)$-dimensional with $d\ge1$, we have $\#\mathcal{S}_j=\bo(2^{j(d-1)})$ for all $j\ge J_0$. Consequently,
the total number of elements in the sets $\mathcal{S}_j$ for all $j=J, \ldots, \tilde{J}-1$ with to-be-determined $\tilde{J} \in \N$ is
\[
N_J^{\tilde{J}}:=
\sum_{j=J}^{\tilde{J}-1} \#\mathcal{S}_j=
\sum_{j=J}^{\tilde{J}-1} \bo(2^{j(d-1)}) =
\begin{cases}
\bo(\tilde{J}-J), & \quad d=1,\\
\bo(2^{\tilde{J}(d-1)}), & \quad d \ge 2.
\end{cases}
\]
Since we require the augmented finite subset of the wavelet basis to be a fixed constant multiple of the traditional one, we must require $N_J^{\tilde{J}}=\bo(\#\mathcal{B}^{dD}_{J_0,J})=\bo(2^{dJ})$. Consequently,
the largest possible scale level
$\tilde{J}$ must satisfy
$\tilde{J}=\bo(2^{J})$ for $d=1$ and $\tilde{J}=\frac{dJ}{d-1}+\bo(1)$ for $d\ge2$. For simplicity, we take
\be \label{tildeJ}
\tilde{J}:=2^J \quad\mbox{for }  d=1
\quad \mbox{and}\quad
\tilde{J}:=\left \lfloor \tfrac{dJ}{d-1} \right \rfloor-1\quad \mbox{for } d \ge 2.
\ee
More specifically, with the choice of $\tilde{J}$ in \eqref{tildeJ}, we shall use
\be \label{BJ}
\mathcal{B}^S_{J_0,J}
:=\mathcal{B}^{dD}_{J_0,J} \cup \cup_{j=J}^{\tilde{J}-1} \mathcal{S}_j,
\quad \mbox{or equivalently},\quad
\mathcal{B}^{S,H^1_0(\Omega)}_{J_0,J}
:=\mathcal{B}^{H^1_0(\Omega)}_{J_0,J} \cup \cup_{j=J}^{\tilde{J}-1} [2^{-j}\mathcal{S}_j],
\ee
where the superscript $S$ indicates that we add extra wavelet elements $S_j, j=J,\ldots, \tilde{J}-1$ to the traditional wavelet basis $\mathcal{B}^{dD}_{J_0,J}$.
Note that $\#\mathcal{B}^{dD}_{J_0,J}=\bo(h^{-d})$ with mesh size $h:=2^{-J}$. Due to the choice of $\tilde{J}$ in \eqref{tildeJ}, it is easy to observe that we still preserve $\#\mathcal{B}^{S,H^1_0(\Omega)}_{J_0,J}=\bo(h^{-d})$ for all $J\ge J_0$.

Recall that a bilinear form is $\blf(u,v):=\la a \nabla u, \nabla v\ra_\Omega$ as in \eqref{weak}.
By the weak formulation in \eqref{weak}
with $g=0$ and $g_b=0$ in \eqref{model0} (i.e., \eqref{model} with $g_b=0$) and
considering the approximated function \[
u_J := \sum_{\eta \in \mathcal{B}^{S,H^1_0(\Omega)}_{J_0,J}} c_{\eta} \eta\quad \mbox{ with to-be-determined coefficients } \{c_\eta\}_{\eta \in \mathcal{B}^{S,H^1_0(\Omega)}_{J_0,J}},
\]
our wavelet Galerkin method reduces to finding all the coefficients $c_\eta$ for $\eta \in \mathcal{B}^{S,H^1_0(\Omega)}_{J_0,J}$ such that
\be \label{auJu}
\blf(u_J,v):=\langle a \nabla u_J, \nabla v\rangle_{\Omega} = \langle f, v \rangle_{\Omega} - \langle g_{\Gamma}, v \rangle_{\Gamma}, \quad \forall  v \in \mathcal{B}^{S,H^1_0(\Omega)}_{J_0,J}.
\ee

To understand our choice of the additional higher-scale level elements in $\cup_{j=J}^{\tilde{J}-1} \mathcal{S}_j$,
we provide some intuition and informal arguments for the proof of convergence.
Assume that $u_{\pm}\in H^m(\Omega_{\pm})$ with $m\ge 2$ and we use biorthogonal wavelets in $L^2(\Omega)$ with approximation order at least $m$.
Note that the true solution $u\in H^1_0(\Omega)$ to the elliptic interface problem \eqref{model} has the wavelet representation in
\eqref{u:repr}.
We consider an approximated function in the space spanned by $\mathcal{B}^{S,H^1_0(\Omega)}_{J_0,J}$:
\[
\mathring{u}_J:=\sum_{\eta\in \mathcal{B}^{S,H^1_0(\Omega)}_{J_0,J}}
\la u, \tilde{\eta}\ra \eta.
\]
Therefore, the error of our approximation is $u-\mathring{u}_J$ satisfies the following identity:
\[
u-\mathring{u}_J=
\sum_{j=J}^\infty \sum_{\alpha\in \Psi^{dD}_j \bs \mathcal{S}_j}
\la u,2^j \tilde{\alpha_j}\ra 2^{-j} \alpha_j+
\sum_{j=\tilde{J}}^\infty \sum_{\alpha_j\in \mathcal{S}_j} \la u, 2^j \tilde{\alpha}_j\ra 2^{-j} \alpha_j.
\]
Hence, to apply the standard argument in FEMs for convergence in the $H^1(\Omega)$-norm,
we have to estimate the first summation for the smooth part in $H^m(\Omega_{\pm})$ and the second summation for the singular part in $H^{1.5-\gep}(\Omega)$ for any $\gep>0$.
Note that $h:=2^{-J}$ is the mesh size. In what follows, the positive constants $C_1,C_2,C_3$  are generic, and independent of the grid size and the scale level, but could depend on $\gep>0$.
The wavelet characterizations of the smooth parts $u_{\pm}\in H^m(\Omega_{\pm})$ give us
\[
\sum_{j=J}^{\infty} \sum_{\alpha_j \in \Psi^{dD}_j \backslash \mathcal{S}_j} 2^{2j} |\la u, \tilde{\alpha}_j \ra|^2 \le 2^{-2 J(m-1)} C_1=h^{2(m-1)}C_1,
\]
while for $u\in H^{1.5-\gep}(\Omega)$ for any $\gep>0$, we have
\[
\sum_{j=\tilde{J}}^{\infty} \sum_{ \alpha_j \in \mathcal{S}_j} 2^{2j} |\la u, \tilde{\alpha}_j \ra|^2 \le 2^{(2\gep-1) \tilde{J}} C_2=h^{(1-2\gep)\tilde{J}/J}C_2.
\]
Hence,
using the Riesz basis property \eqref{stability},
it follows that the squared error of the approximation is
\be \label{heuristic:rates}
\begin{aligned}
\|u-\mathring{u}_J\|_{H^1(\Omega)}^2
&\le C_{\mathcal{B},2} \left( \sum_{j=J}^{\infty} \sum_{\alpha_j \in \Psi^{d D}_j \backslash \mathcal{S}_j} 2^{2j} |\la u, \tilde{\alpha}_j \ra|^2
+
\sum_{j=\tilde{J}}^{\infty} \sum_{ \alpha_j \in \mathcal{S}_j} 2^{2j} |\la u, \tilde{\alpha}_j \ra|^2\right)\\
&\le C_3 h^{\min(2(m-1), 2(1/2- \gep) \tilde{J}/J)}
\end{aligned}
\ee
for all $\gep>0$, where the generic constant $C_3$ depends on $\gep>0$.
By the choice of $\tilde{J}$ in \eqref{tildeJ}, under the $H^1(\Omega)$-norm, we see that the convergence order is $m-1$ for $d=1$ (in fact, it suffices to take any $\tilde{J}$ satisfying $\frac{2(m-1)J}{1-2\gep}\le \tilde{J}\le 2^J$ for this case) and $\frac{d}{2(d-1)}-\frac{d}{d-1}\gep$ with any $\gep>0$ for $d\ge 2$.
That is, the maximum attainable convergence rate in the $H^{1}(\Omega)$-norm for our method is $m-1$ in 1D for every $m\ge 2$, but only order $1-2\gep$ in 2D, and $\tfrac{1}{2} < \tfrac{d}{2(d-1)}-\frac{d}{d-1}\gep < 1$ in $d$D with $d \ge 3$, for any approximation order $m\ge 2$. In the formal proof of convergence discussed in \cref{sec:proof}, we use several technical steps to remove this dependence on $\gep$ and introduce a $\log(h)$ factor.

This conclusion crucially relies on the assumptions that we enrich the traditional finite subset of the wavelet basis only with basis functions from higher scale levels near the interface $\Gamma$, and that the augmented subset is a fixed constant multiple of the traditional one.
To obtain a high-order method for $d = 2$ or to enhance the convergence rates for $d\ge 3$, more sophisticated ideas are needed, which are left for future work.
The next theorem presents our main result on the convergence of our method for solving the 2D elliptic interface problem using $\mathcal{B}^{S,H^1_0(\Omega)}_{J_0,J}$ in \eqref{BJ} as $J\to\infty$, and the uniform boundedness of the condition numbers of the coefficient matrices.
We assume that $g=0$ on $\Gamma$ in the first jump condition \eqref{jump1} to avoid a discontinuous $u$ and $g_b=0$ in $\partial \Omega$, 
consistent with the standard FEM argument. Since $\mathcal{B}^{S,H^1_0(\Omega)}_{J_0,J}$ is a finite subset of the Riesz wavelet basis $\mathcal{B}^{H^1_0(\Omega)}_{J_0}$
in $H^1_0(\Omega)$,
the condition numbers of coefficient matrices from \eqref{auJu} are uniformly bounded and independent of the mesh size $h$ and resolution level $J$.
For technical reasons, we defer the proof and its auxiliary results to \cref{sec:proof}.

%
\begin{theorem}\label{thm:converg}
Let $\Omega:=(0,1)^2$ and set $\tilde{J}:=2J-1$ in \eqref{tildeJ} for $d=2$.
Assume that $\mathcal{B}^{S,H^1_0(\Omega)}_{J_0,J}$ in \eqref{BJ} is constructed from a compactly supported biorthogonal wavelet $(\{\tilde{\phi};\tilde{\psi}\},\{\phi;\psi\})$ in $L^{2}(\R)$ such that each of its elements is compactly supported, $\phi,\psi \in H^1(\R)$, $\phi$ has the approximation order at least $2$ (equivalently, $\vmo(\tilde{\psi}) \ge 2$), and $\tilde{\phi}, \tilde{\psi} \in H^t(\R)$ for some $t>0$. Under the standard assumptions $g=0$ and $g_b=0$ in
\eqref{model0} (i.e., $g_b=0$ in \eqref{model}) as in
finite element methods,
let $u\in H^1_0(\Omega)$ be the exact solution of the model problem \eqref{model} with variable functions $a,f,g_\Gamma$ such that
\be \label{assumption:u}
u_+:=u\chi_{\Omega_+}\in H^2(\Omega_+)\quad \mbox{ and }\quad u_-:=u\chi_{\Omega_-}\in H^2(\Omega_-).
\ee
We assume that the interface $\Gamma$ is of class $\mathscr{C}^2$.
For $J\ge J_0$, define
$h:=2^{-J}$ as the mesh size and $N_J$ as the cardinality of 
$\mathcal{B}^{S,H^1_0(\Omega)}_{J_0,J}$. Define $V_h^{wav}:=
\mbox{span}(\mathcal{B}^{S,H^1_0(\Omega)}_{J_0,J})$.
Let $u_h =u_J:=\sum_{\eta \in \mathcal{B}^{S,H^1_0(\Omega)}_{J_0,J}} c_{\eta} \eta \in V_h^{wav}$ be the numerical solution obtained from \eqref{auJu} (i.e., the weak formulation of \eqref{model} in the wavelet subspace $V_h^{wav}$) by using the basis $\mathcal{B}^{S,H^1_0(\Omega)}_{J_0,J}$ in \eqref{BJ}.
Then for all $J \ge J_0$, there exists a positive constant $C$, independent of all $J$, $h$ and $N_J$, such that
\be \label{converg:H1}
\|u_h - u \|_{H^{1}(\Omega)}\le C h |\log (h)|, \qquad
\|u_J - u \|_{H^{1}(\Omega)}\le C N_J^{-1/2} J,
\ee
and
\be \label{converg:L2}
\|u_h - u \|_{\LpO{2}}\le C h^2 |\log (h)|^2,
\qquad \|u_J - u \|_{\LpO{2}}\le C N_J^{-1} J^2,
\ee
where $\log (\cdot)$ is the natural logarithm and in fact, the above generic constant $C$ in \eqref{converg:H1} and \eqref{converg:L2} is bounded by $c (\|u_+\|^2_{H^2(\Omega_+)}+\|u_-\|^2_{H^2(\Omega_-)})^{1/2}$ with a positive constant $c$ only depending on 
the interface $\Gamma$ and the wavelet basis.
Moreover, the condition number must satisfy
\be \label{cond:number}
\kappa\Big([\blf(\alpha,\beta)]_{\alpha,\beta\in \mathcal{B}^{S,H^1_0(\Omega)}_{J_0,J}}\Big) \le C_w \| a\|_{L^\infty(\Omega)} \|a^{-1}\|_{L^\infty(\Omega)}, \quad \text{for all } J \ge J_0,
\ee
where $\kappa$ denotes the condition number of the coefficient matrix and $C_w$ is a positive constant that only depends on the wavelet basis and the domain $\Omega$, but $C_w$ is independent of the interface $\Gamma$.
\end{theorem}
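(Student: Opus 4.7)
The proof separates into two parts: the condition number bound \eqref{cond:number}, which follows directly from the Riesz basis property, and the convergence estimates \eqref{converg:H1}--\eqref{converg:L2}, which require a tailored wavelet approximation exploiting the biorthogonal structure. I would treat the condition number first, as it is essentially a corollary of the setup.

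For \eqref{cond:number}, observe that $\mathcal{B}^{S,H^1_0(\Omega)}_{J_0,J}\subset \mathcal{B}^{H^1_0(\Omega)}_{J_0}$, so the sum $\sum_\eta|c_\eta|^2$ is equivalent to $\|u_h\|^2_{H^1(\Omega)}$ with constants $C_{\mathcal{B},1}, C_{\mathcal{B},2}$ independent of $J$. Combined with Poincar\'e's inequality on $H^1_0(\Omega)$ and the pointwise sandwich $\|a^{-1}\|_{L^\infty(\Omega)}^{-1}|u_h|^2_{H^1}\le B(u_h,u_h)\le \|a\|_{L^\infty(\Omega)}|u_h|^2_{H^1}$, standard arguments relating bilinear-form quotients to matrix condition numbers yield \eqref{cond:number} with $C_w$ depending only on the Riesz constants and on the Poincar\'e constant of $\Omega$, and hence independent of $\Gamma$.

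For \eqref{converg:H1}, C\'ea's lemma (applicable since $B$ is $H^1_0$-coercive and continuous) reduces the problem to bounding $\inf_{v_h\in V_h^{wav}}\|u-v_h\|_{H^1(\Omega)}$. My candidate approximant is the biorthogonal projection $Q_J u:=\sum_{\eta\in \mathcal{B}^{S,H^1_0(\Omega)}_{J_0,J}}\langle u,\tilde\eta\rangle\eta$. I would split the missing wavelet contributions according to whether the dual support meets $\Gamma$. For dual supports separated from $\Gamma$, the piecewise $H^2$-regularity \eqref{assumption:u} combined with the vanishing moments of $\tilde\eta$ yields the classical $\bo(h)$ Bramble--Hilbert rate in $H^1$. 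For dual supports meeting $\Gamma$, the augmented basis $\mathcal{B}^{S,H^1_0(\Omega)}_{J_0,J}$ already contains all interface wavelets for scales $J\le j\le 2J-2$, so only the tail $j\ge 2J-1$ is missing; here the weighted Bessel property of $\tilde{\mathcal{B}}^{2D}_{J_0}$ in fractional Sobolev spaces, together with the one-dimensional nature of $\Gamma\in\mathscr{C}^2$ (producing only $\bo(2^j)$ interface dual supports at scale $j$), provides the needed control. A scale-by-scale accounting, in which each of the $\bo(J)$ scales between $J$ and $2J-2$ contributes at the $\bo(2^{-J})$ level, produces the logarithmic factor in \eqref{converg:H1}.

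The $L^2$-estimate \eqref{converg:L2} then follows from an Aubin--Nitsche duality argument: for $g\in L^2(\Omega)$, let $\phi\in H^1_0(\Omega)$ solve the adjoint problem with source $g$; then $(u-u_h,g)_\Omega=B(u-u_h,\phi-v_h)$ for any $v_h\in V_h^{wav}$, and Cauchy--Schwarz together with the $H^1$-estimate applied both to $u$ (contributing $h|\log h|$) and to $\phi$ (piecewise $H^2$ across $\Gamma$, contributing another $h|\log h|$) yields \eqref{converg:L2}. The $N_J$-versions follow from $N_J=\bo(h^{-2})$ and $|\log h|\sim J$. The principal difficulty will be making the weighted Bessel estimate near $\Gamma$ quantitatively sharp: one must precisely quantify how much regularity is lost when integrating $u$ against dual wavelets whose supports straddle $\Gamma$, which requires fractional Sobolev extension/trace inequalities calibrated against both the $\mathscr{C}^2$ geometry of the interface and the local shape of the dual boundary wavelets defined in \eqref{bndrywav}.
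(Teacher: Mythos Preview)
Your overall architecture matches the paper's proof: condition number via Riesz basis and Poincar\'e, $H^1$-error via C\'ea plus the biorthogonal projection $\mathring{u}_h=\sum_{\eta\in\mathcal{B}^{S,H^1_0(\Omega)}_{J_0,J}}\langle u,\tilde\eta\rangle\eta$ with the missing coefficients split according to whether $\operatorname{supp}(\tilde\alpha_j)$ meets $\Gamma$, and $L^2$-error via Aubin--Nitsche. That is exactly what the paper does.

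However, your account of where the logarithm comes from is wrong, and the sentence ``each of the $\mathcal{O}(J)$ scales between $J$ and $2J-2$ contributes at the $\mathcal{O}(2^{-J})$ level'' is internally inconsistent with what you wrote just before it: the interface wavelets at scales $J\le j\le 2J-2$ are \emph{in} the basis, so they contribute nothing to $u-\mathring{u}_h$. The only interface coefficients you must bound are those with $j\ge 2J-1$, and the logarithm does not arise from counting scales. In the paper the mechanism is this: after localizing by a cutoff $\rho$ supported near $\Gamma$, the function $w=\rho u$ belongs only to $H^{3/2-2\varepsilon}(\Omega)$ with $\|w\|_{H^{3/2-2\varepsilon}}\le C\varepsilon^{-1}(\|u_+\|_{H^2(\Omega_+)}+\|u_-\|_{H^2(\Omega_-)})$; the $\varepsilon^{-1}$ blowup is obtained by writing $\nabla w=\nabla(\rho v_+)+F\chi_{\Omega_-}$ with $F\in H^1$ and invoking a multiplication estimate together with the explicit bound $\|\chi_{\Omega_-}\|_{H^{1/2-\varepsilon}(\mathbb{R}^2)}\le C_\Gamma\varepsilon^{-1/2}$ (which the paper proves as a separate lemma). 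A weighted Bessel inequality at exponent $\tau=3/2-2\varepsilon$ then gives
\[
\sum_{j\ge 2J-1}\sum_{\substack{\tilde\alpha_j\in\tilde\Psi_j^{2D}\\ \operatorname{supp}(\tilde\alpha_j)\cap\Gamma\ne\emptyset}}2^{2j}|\langle u,\tilde\alpha_j\rangle|^2
\;\le\; C\,2^{-2J(1-4\varepsilon)}\varepsilon^{-2},
\]
and optimizing over $\varepsilon$ (the maximum of $\varepsilon\,2^{-4J\varepsilon}$ occurs at $\varepsilon=\frac{1}{4|\log h|}$) produces the factor $|\log h|^2$ on the squared $H^1$-error, hence $h|\log h|$ in \eqref{converg:H1}. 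Your closing remark about ``fractional Sobolev extension/trace inequalities calibrated against the $\mathscr{C}^2$ geometry'' is pointing at exactly this, but you should replace the scale-counting heuristic with the $\varepsilon$-optimization; without it you will not be able to close the estimate, because $u\notin H^{3/2}$ across $\Gamma$ and any attempt to sum the tail at the critical exponent diverges. One further small point: for large $j$ the interface dual wavelets are all interior (not boundary) elements $\tilde\eta_{j,k}$, so the shape of the boundary duals in \eqref{bndrywav} plays no role in the interface estimate.
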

	
We shall prove \cref{thm:converg} under the abstract assumption \eqref{assumption:u} on $u_+$ and $u_-$, which can be satisfied by specifying concrete conditions on variable functions $a, f$ and $g_\Gamma$. For example,
according to \cite[Theorem 10.1 and Section~16]{LUbook},
the assumption \eqref{assumption:u} on $u$ is satisfied if $a_+:=a\chi_{\Omega_+}\in C^1(\overline{\Omega_+})$, $a_-:=a\chi_{\Omega_-}\in C^1(\overline{\Omega_-})$, $f\in \LpO{2}$, and $g_\Gamma\in H^{1/2}(\Gamma)$.
Even though we assume $\mbox{ess}$-$\inf_{x,y\in \Omega} a(x,y)>0$, the variable functions $a\in L^\infty(\Omega)$ and $f\in \LpO{2}$ could be discontinuous across the interface $\Gamma$.
It is also important to notice that $N_J$, the cardinality of the set $\mathcal{B}^{S,H^1_0(\Omega)}_{J_0,J}$, satisfies $h^{-2}\le N_J\le C_\Gamma h^{-2}$ with $h:=2^{-J}$ for a positive constant $C_\Gamma$ only depending on the interface curve $\Gamma$, in particular, the length of $\Gamma$. Our proof of \cref{thm:converg}
extensively uses the dual part of the biorthogonal wavelet basis and relies on the weighted Bessel properties and results of wavelets in fractional Sobolev spaces, plus standard FEM arguments and various inequalities in fractional Sobolev spaces. The key techniques for proving convergence are quite similar across dimensions, with the 1D case being special, because the interface $\Gamma$ is a single point and because of special results for Sobolev spaces in one dimension.

\begin{remark} \label{remark:quad}
	\cref{thm:converg} is formulated using the exact interface and Galerkin forms. The same convergence rates remain valid, when the interface curve and inner products are approximated numerically, provided that the curve-approximation and quadrature errors are at most of the same order as the discretization errors. We justify this remark at the end of \cref{sec:proof} using a perturbation argument that exploits the Riesz basis property.  
\end{remark}

 The linear system in \eqref{auJu} can be conveniently formed by first computing the inner products of the bilinear form involving the scaling functions $\phi$ at the highest scale level, and then using the refinability and fast transform of our wavelet basis (see \eqref{refmat}). To evaluate these inner products, we locally approximate the interface curve $\Gamma$ by a polynomial of high enough degree, map each curve-sided triangle or quadrilateral integration domain to a reference right triangle or square, and then apply the Gaussian quadrature. These three key ingredients enable the inner products to be computed with sufficient accuracy to attain the convergence rates established in \cref{thm:converg}. 

		\begin{figure}[htbp]
			\centering
			 \begin{subfigure}[b]{0.24\textwidth} \includegraphics[width=\textwidth]{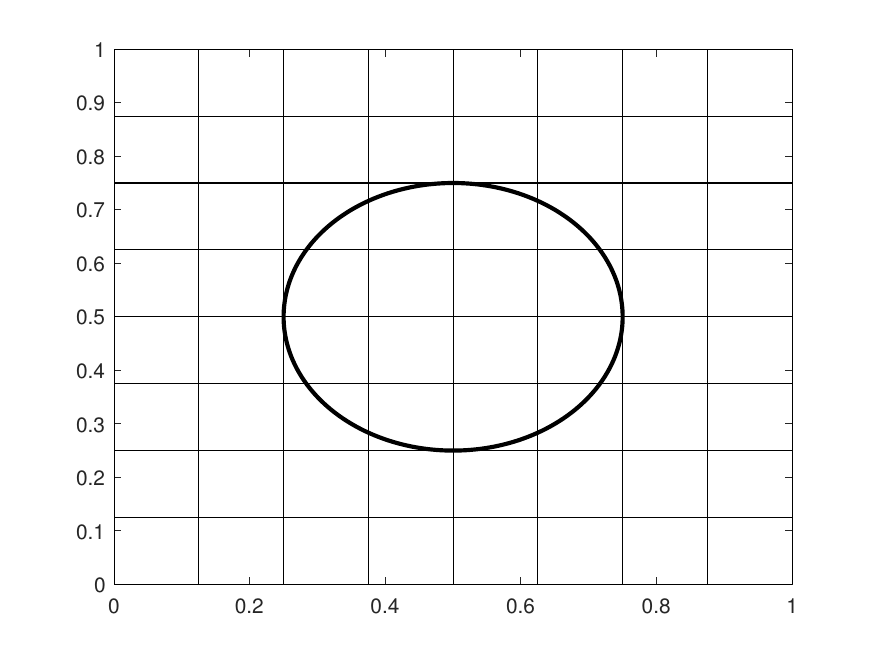}
				 \caption{$\mathcal{B}^{2D}_{3,4}$}
			\end{subfigure}
			 \begin{subfigure}[b]{0.24\textwidth} \includegraphics[width=\textwidth]{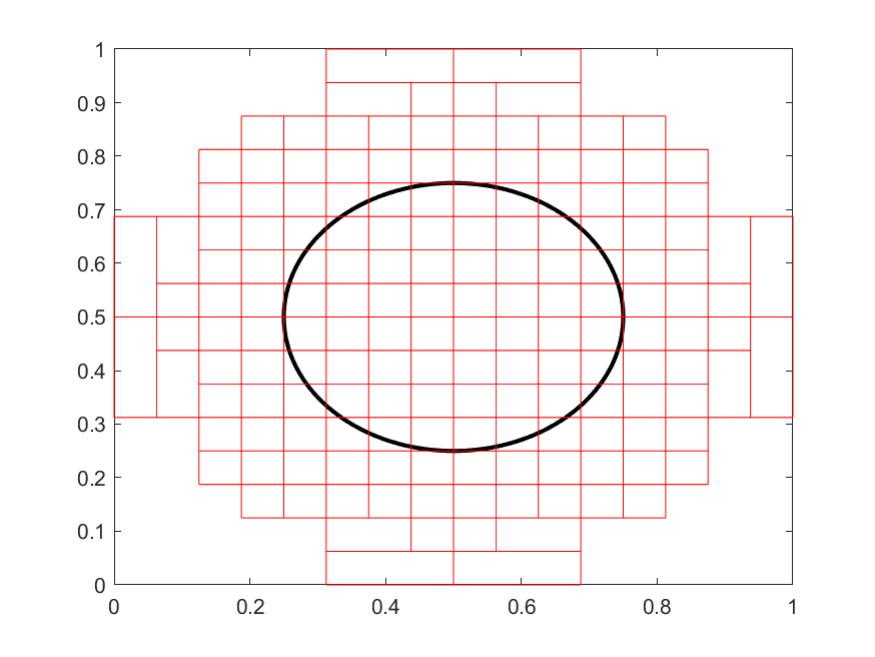}
				 \caption{$\mathcal{S}_4$}
			\end{subfigure}				
			 \begin{subfigure}[b]{0.24\textwidth} \includegraphics[width=\textwidth]{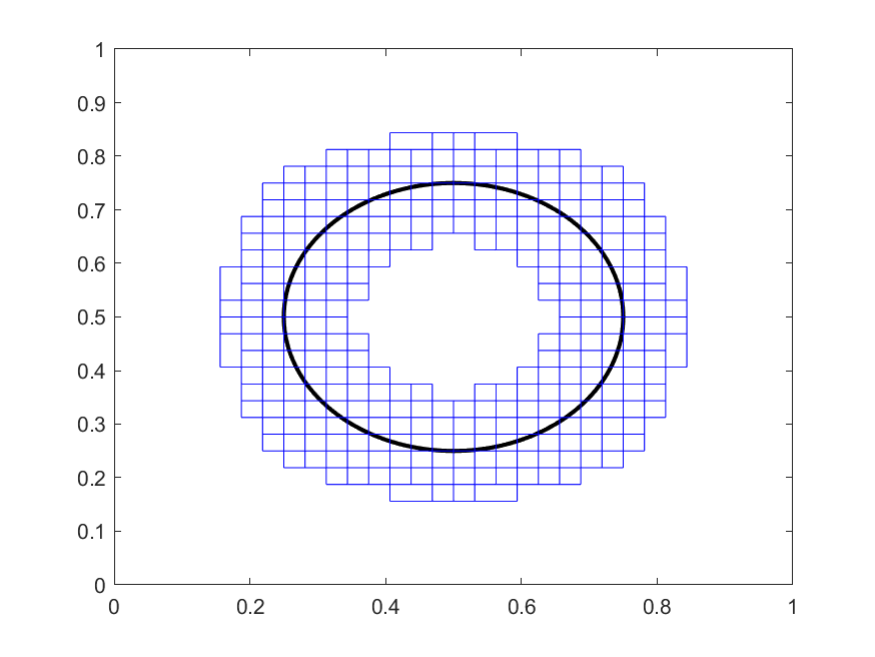}
				 \caption{$\mathcal{S}_5$}
			\end{subfigure}
			 \begin{subfigure}[b]{0.24\textwidth} \includegraphics[width=\textwidth]{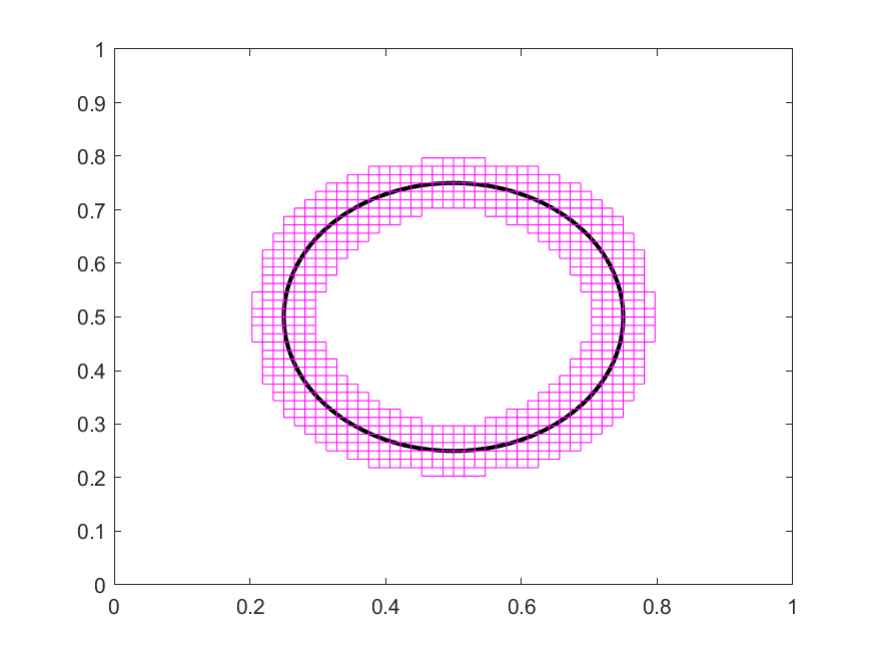}
				 \caption{$\mathcal{S}_6$}
			\end{subfigure}
			\caption{For simplicity, we assume that the interface curve, $\Gamma$, is a circle. Panel (a) depicts the overlapping supports of wavelets in $\mathcal{B}^{2D}_{3,4}$. Panels (b)-(d) depict the overlapping supports of extra wavelet elements/basis functions added along the interface $\Gamma$, which make up the set $\cup_{j=4}^{6} [2^{-j} \mathcal{S}_j]$.}
			\label{fig:BSsupp}
		\end{figure}
		
	\section{Numerical Experiments} \label{sec:exp}
		
		In this section, we present the setup of the numerical experiments and the corresponding performance of our wavelet Galerkin method.
		
		 \subsection{A biorthogonal wavelet basis in $H^1_0(\Omega)$ derived from bilinear finite elements}\label{subsec:wavelet}
		
		Interpolating functions play a critical role in numerical PDEs, wavelet analysis, and computer aided geometric design (e.g., see \cite{han24} and references therein).
 The simplest example of compactly supported interpolating functions is probably the hat function $\phi(x):=\max(1-|x|,0)$ for $x\in \R$, which is extensively used in numerical PDEs and approximation theory. The hat function $\phi$ satisfies the refinement equation $\phi=\frac{1}{2}\phi(2\cdot-1)+\phi(2\cdot)+\frac{1}{2}\phi(2\cdot+1)$ and $\wh{\phi}(0)=1$.
		In what follows, we recall a biorthogonal wavelet basis in $\LpI{2}$ derived from the hat function $\phi$ and discussed in \cite[Example~7.5]{HM21a}; also see \cite{dku99}. We use this biorthogonal wavelet basis in this section, since it is simple and its second-order approximation meets the minimum requirement of \cref{thm:converg} (also refer to the discussion leading up to \eqref{heuristic:rates}).
		
		Let $\phi$ be the hat function. Consider the scalar biorthogonal wavelet $(\{\tilde{\phi};\tilde{\psi}\},\{\phi;\psi\})$ in $\Lp{2}$ with $\widehat{\phi}(0)=\widehat{\tilde{\phi}}(0)=1$ and a biorthogonal wavelet filter bank $(\{\tilde{a};\tilde{b}\},\{a;b\})$ given by
		\begin{align}	 a=&\left\{\tfrac{1}{4},\tfrac{1}{2},\tfrac{1}{4}\right\}_{[-1,1]}, \quad b=\left\{-\tfrac{1}{8},-\tfrac{1}{4},\tfrac{3}{4},-\tfrac{1}{4},-\tfrac{1}{8}\right\}_{[-1,3]},
			\label{ab} \\
			 \tilde{a}=&\left\{-\tfrac{1}{8}, \tfrac{1}{4}, \tfrac{3}{4}, \tfrac{1}{4}, -\tfrac{1}{8} \right\}_{[-2,2]}, \quad \tilde{b}=\left\{-\tfrac{1}{4}, \tfrac{1}{2}, -\tfrac{1}{4}\right\}_{[0,2]}.\label{tatb}
		\end{align}
		In other words, the refinable functions $\phi, \tilde{\phi}$ and the wavelet functions $\psi, \tilde{\psi}$ are determined through the equations in \eqref{phipsi} and \eqref{tphitpsi}.
		The analytic expression of the hat function is $\phi:=(x+1)\chi_{[-1,0)} + (1-x)\chi_{[0,1]}$. Additionally, $\tilde{\phi},\tilde{\psi}\in H^\tau(\R)$ with $\tau<0.440765$, while $\phi, \psi\in H^\tau(\R)$ with $\tau<1.5$. Both wavelet functions $\psi$ and $\tilde{\psi}$ have order two vanishing moments. As stated in \cite[Example 7.5]{HM21a}, the boundary refinable functions and boundary wavelet functions are defined to be
		\begin{align*}
				\psi^{L} & = \tfrac{1}{2} \phi(2\cdot -1)  - \phi(2\cdot-3) + \tfrac{1}{2} \phi(2\cdot - 4), \\
				\tilde{\phi}^{L} & =
				\begin{bmatrix}
					0 & - \tfrac{1}{2}\\
					1 & \tfrac{3}{2} \\
				\end{bmatrix}
				 \tilde{\phi}^{L}(2\cdot)
				+ \begin{bmatrix}
					\tfrac{1}{2}\\
					\tfrac{1}{2}
				\end{bmatrix} \tilde{\phi}(2\cdot -3)
				+ \begin{bmatrix}
					\tfrac{3}{2}\\
					-\tfrac{1}{4}
				\end{bmatrix} \tilde{\phi}(2\cdot -4)
				+ \begin{bmatrix}
					\tfrac{1}{2}\\
					0
				\end{bmatrix} \tilde{\phi}(2\cdot -5)
				+ \begin{bmatrix}
					-\tfrac{1}{4}\\
					0
				\end{bmatrix} \tilde{\phi}(2\cdot -6),\\
				\tilde{\psi}^{L} & =
				\begin{bmatrix}
					0 & -1\\
					-1 & 2
				\end{bmatrix}
				 \tilde{\phi}^{L}(2\cdot) +
				\begin{bmatrix}
					1\\
					0
				\end{bmatrix}
				\tilde{\phi}(2\cdot -3)
				+ \begin{bmatrix}
					0\\
					-\tfrac{1}{2}
				\end{bmatrix} \tilde{\phi}(2\cdot-4),\\
				\psi^{R} & = \phi^{L}(1-\cdot), \quad \tilde{\phi}^{R} = \tilde{\phi}^{L}(1-\cdot), \quad \text{and} \quad \tilde{\psi}^{R} = \tilde{\psi}^{L}(1-\cdot).
		\end{align*}
		Furthermore, we define
		\be \label{hatwavI}
		\begin{aligned}
			& \Phi_{j} := \{\phi_{j;2},\phi_{j;1}\} \cup \{\phi_{j;k}:3 \le k \le 2^{j}-3\} \cup \{\phi_{j;2^j -2},\phi_{j;2^j -1}\},\\
			& \Psi_{j} := \{\psi_{j;1}, \psi^{L}_{j;0}\} \cup \{\psi_{j;k}:2 \le k \le 2^{j}-3\} \cup \{\psi_{j;2^j-2},\psi^{R}_{j;2^{j}-1}\},\\
			& \tilde{\Phi}_{j} := \{ \tilde{\phi}^{L}_{j;0}\} \cup \{\tilde{\phi}_{j;k}:3 \le k \le 2^{j}-3\} \cup \{\tilde{\phi}^{R}_{j;2^{j}-1}\},\\
			& \tilde{\Psi}_{j} := \{ \tilde{\psi}^{L}_{j;0}\} \cup \{\tilde{\psi}_{j;k}:2 \le k \le 2^{j}-3\} \cup \{\tilde{\psi}^{R}_{j;2^{j}-1}\},
		\end{aligned}
		\ee
		$\mathcal{B}^{1D}_{J_0} := \Phi_{J_0} \cup \cup_{j=J_0}^{\infty} \Psi_{j}$, and $\tilde{\mathcal{B}}^{1D}_{J_0} := \tilde{\Phi}_{J_0} \cup \cup_{j=J_0}^{\infty} \tilde{\Psi}_{j}$. Then, $(\tilde{\mathcal{B}}^{1D}_{J_0},\mathcal{B}^{1D}_{J_0})$, where $J_0 \ge 3$, is a biorthogonal wavelet in $\LpI{2}$. Moreover, by construction, we observe that $\mathcal{B}^{1D}_{J_0}$ reproduces $x$ and $x-1$ in the neighbourhood of the left and right endpoints of the unit interval $[0,1]$. Using \eqref{B2D}, \eqref{PhiPsi2D}, and \eqref{BJH1}, we obtain the 2D wavelet basis function used in the Galerkin framework for solving \eqref{model}. Due to item (3) of \cref{thm:bw} and the above relations for $\psi^{L},\tilde{\phi}^{L},\tilde{\psi}^{L},\psi^R$, there exist well-defined refinability matrices $A_{j,j'}$ and $B_{j,j'}$ such that
		\be \label{refmat}
		\Phi_{j} = A_{j,j'}
		\Phi_{j'}
		\quad \text{and} \quad
		\Psi_{j} = B_{j,j'} \Phi_{j'}
		\quad
		\text{for all} \quad j < j',
		\ee
		which is convenient to use in the numerical implementation (in forming the coefficient matrix).
		
		In our experiments, the degree of the Gaussian quadrature rule is 5 and we approximate the interface curve locally with a polynomial degree 3. See \cref{remark:quad} and the paragraph that follows for further details on the numerical quadrature and interface curve approximation.
		
		\cref{fig:2Dwavelets} visualizes the basis functions used in our approximate solution $u_J$. Due to the symmetry in the biorthogonal wavelet basis in \cref{subsec:wavelet}, each term of the approximate solution can be obtained by scaling, shifting, and rotating one of the functions in panels (c)-(h) of \cref{fig:2Dwavelets}.
		\begin{figure}[htbp]
			\centering
			 \begin{subfigure}[b]{0.24\textwidth} \includegraphics[width=\textwidth]{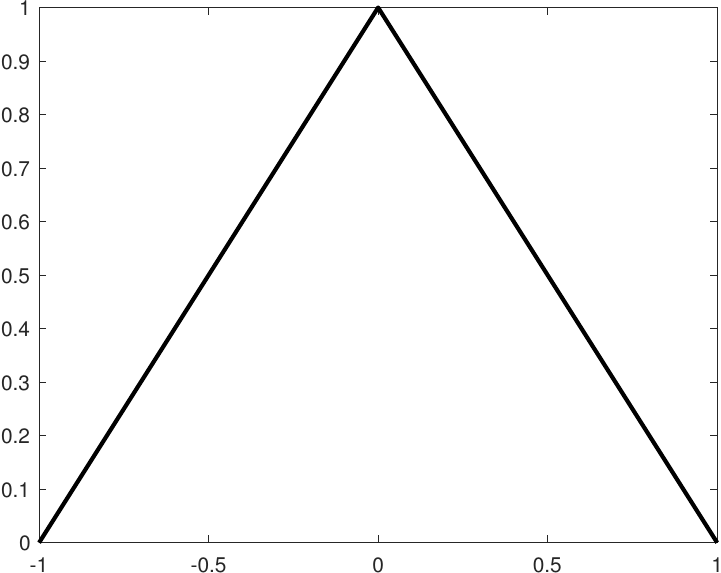}
				\caption{$\phi$}
			\end{subfigure}
			 \begin{subfigure}[b]{0.24\textwidth} \includegraphics[width=\textwidth]{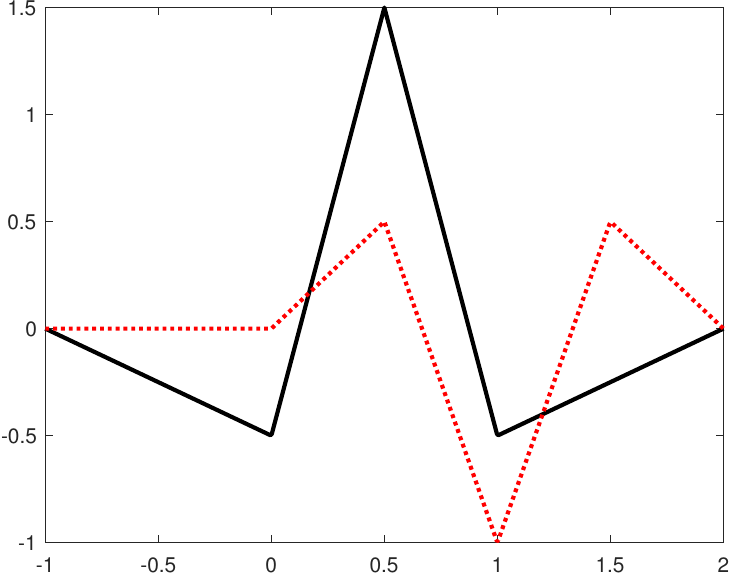}
				\caption{$\psi$ (black), $\psi^{L}$ (red)}
			\end{subfigure}				
			 \begin{subfigure}[b]{0.24\textwidth} \includegraphics[width=\textwidth]{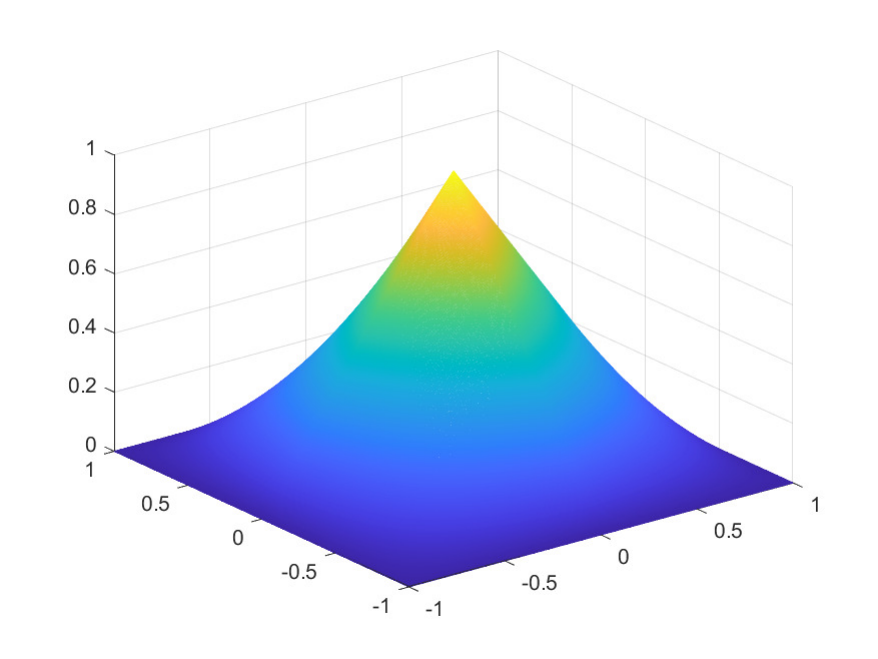}
				\caption{$\phi \otimes \phi$}
			\end{subfigure}
			 \begin{subfigure}[b]{0.24\textwidth} \includegraphics[width=\textwidth]{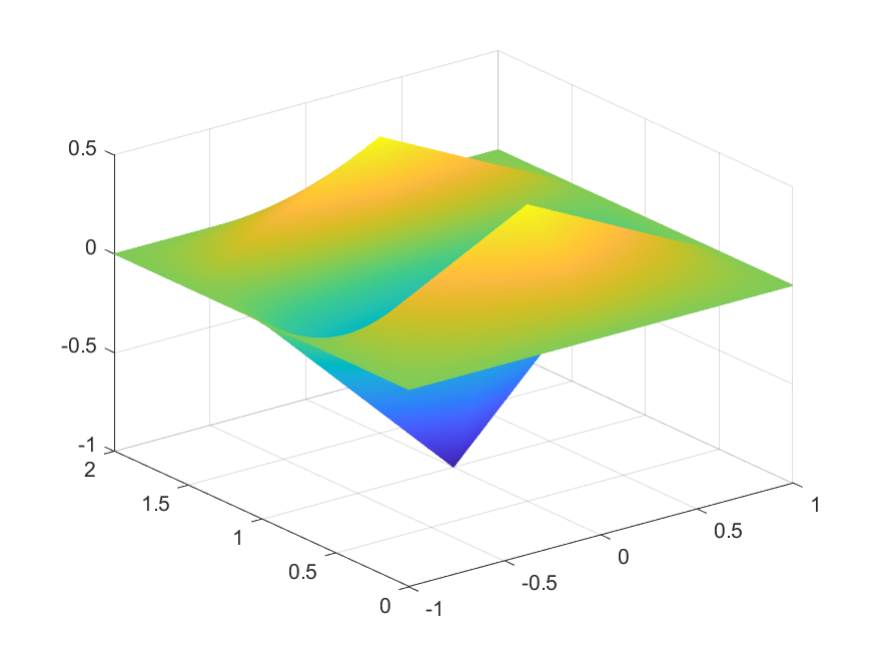}
				\caption{$\phi \otimes \psi^{L}$}
			\end{subfigure}
			 \begin{subfigure}[b]{0.24\textwidth} \includegraphics[width=\textwidth]{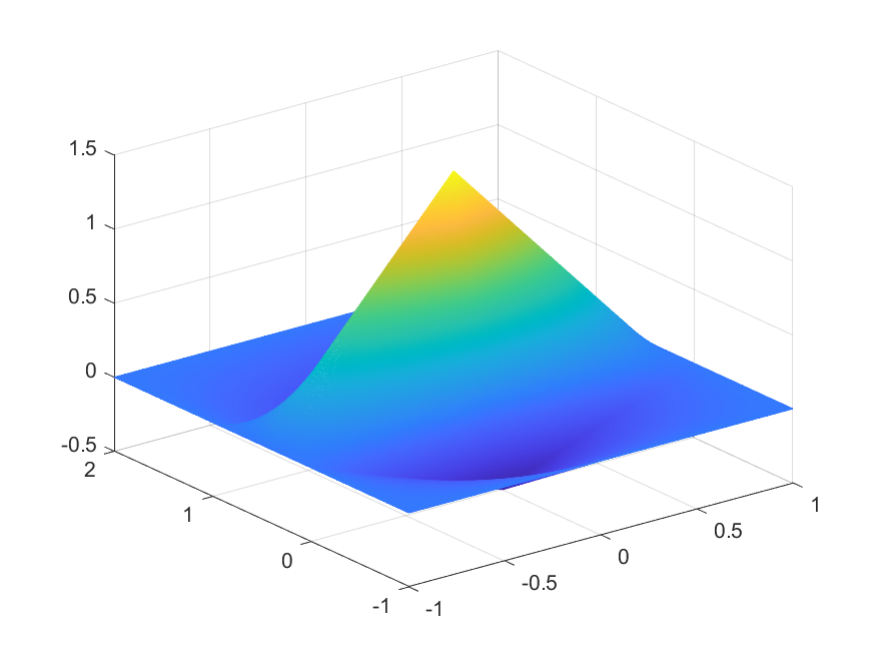}
				\caption{$\phi \otimes \psi$}
			\end{subfigure}		
			 \begin{subfigure}[b]{0.24\textwidth} \includegraphics[width=\textwidth]{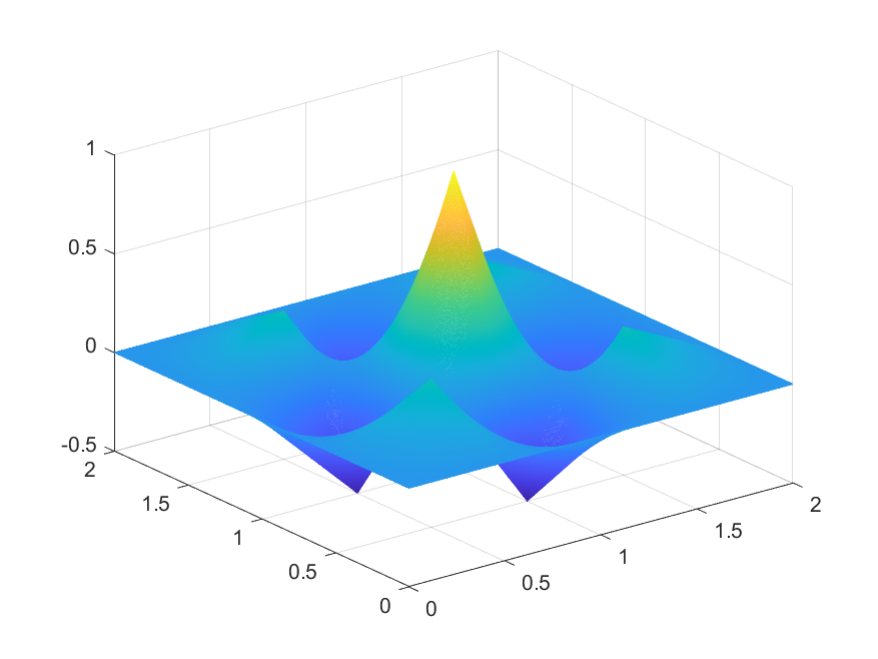}
				\caption{$\psi^{L} \otimes \psi^{L}$}
			\end{subfigure}	
			 \begin{subfigure}[b]{0.24\textwidth} \includegraphics[width=\textwidth]{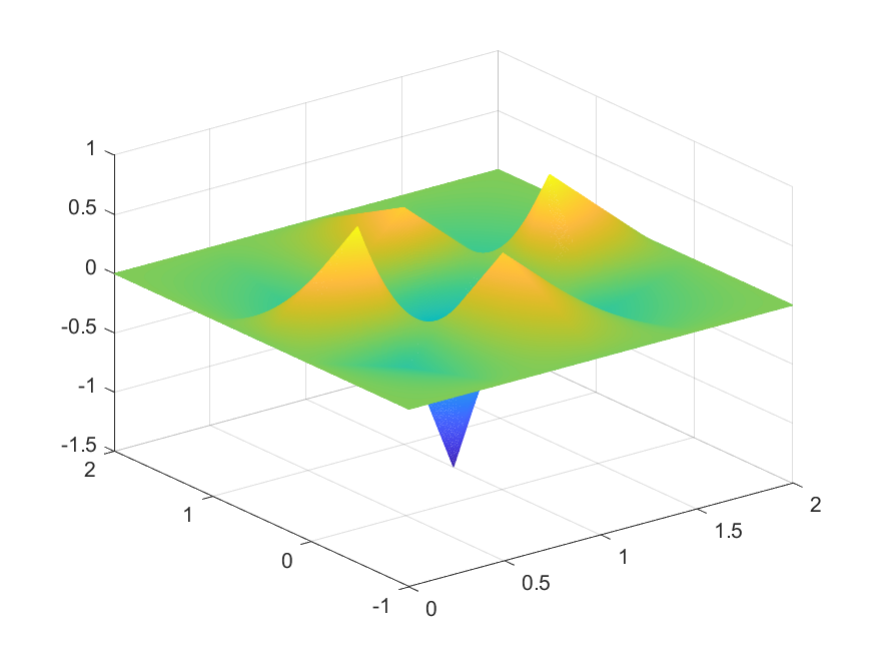}
				\caption{$\psi^{L} \otimes \psi$}
			\end{subfigure}	
			 \begin{subfigure}[b]{0.24\textwidth} \includegraphics[width=\textwidth]{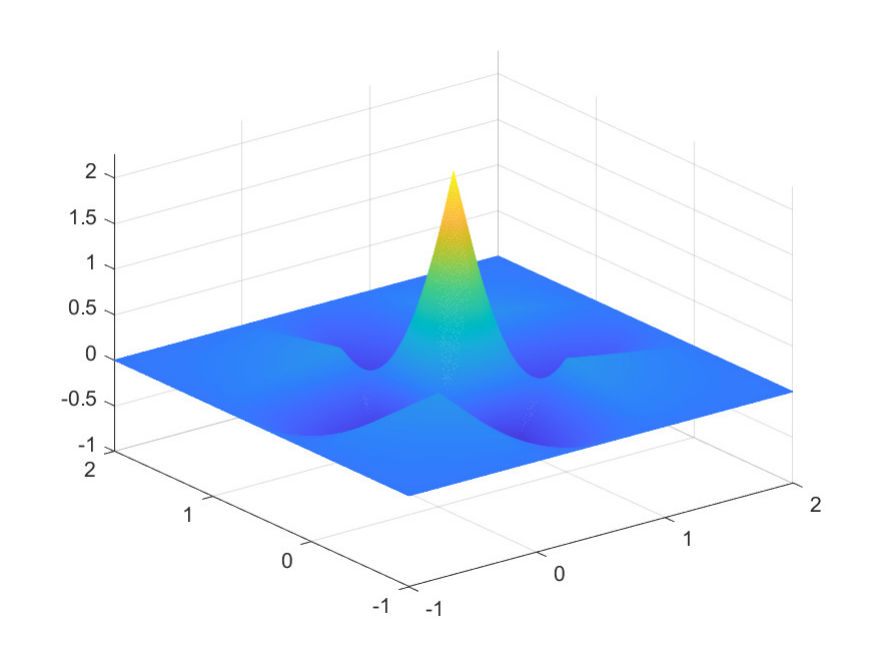}
				\caption{$\psi \otimes \psi$}
			\end{subfigure}	
			\caption{Panels (a)-(b) depict generators of the 1D wavelet basis $\mathcal{B}^{1D}_{J_0}$ with $J_0=3$. Panels (c)-(h) depict generators of the 2D wavelet basis $\mathcal{B}^{2D}_{J_0}$ with $J_0 =3$.}
			\label{fig:2Dwavelets}
		\end{figure}
		Meanwhile, \cref{fig:BSsupp} visualizes the overlapping supports of wavelet basis functions in $\mathcal{B}^{S,H^1_0(\Omega)}_{3,4} \cup \cup_{j=4}^{6} [2^{-j} \mathcal{S}_j]$, and gives us an insight as to how we add the wavelets along the interface in our approximate solution $u_J$.
		
		\subsection{Numerical errors for approximate solutions}
		In each table, $J$ corresponds to the scale level in \eqref{B2DS} with the coarsest scale level $J_0 = 3$. $N_J$ stands for the number of terms (freedom) at the scale level $J$ used in the approximate solution, which is equal to the cardinality of $\mathcal{B}^{S,H^1_0(\Omega)}_{3,J}$ as defined in \eqref{BJ}. For a known exact solution $u$, the quantities under `order' (for the $L^2(\Omega)$ convergence) are computed as follows
		\begin{equation} \label{convorder}
		\text{order} = 2 \log_2\left( \|u_{J-1}-u\|_2 / \|u_{J}-u\|_2 \right)  \left(\log_2\left( N_J / N_{J-1} \right)\right)^{-1}.
		\end{equation}
		If the exact solution $u$ is unknown, we replace $u$ with $u^{\text{ref}}$ in the above formula, where $u^{\text{ref}}$ is the reference solution computed using $\mathcal{B}^{S,H^1_0(\Omega)}_{3,8}$. The convergence in terms of $H^1(\Omega)$-semi-norm is similarly calculated, except we use the solution's gradient. To approximate the $\LpO{2}$-error, we compute all errors using the $l^2$-norm on a fine grid of size $2^{-13}$ in each direction. The condition number $\kappa$ of the coefficient matrix of size $N_J \times N_J$ is calculated by dividing its largest singular value with its smallest singular value. For each example, we compare the errors and the convergence rates of the approximate solution formed by $\mathcal{B}^{S,H^1_0(\Omega)}_{3,J}$ with the one formed by the traditional wavelet method using $\mathcal{B}^{2D,H^{1}_0(\Omega)}_{3,J}$ only. Because both $\mathcal{B}^{2D,H^{1}_0(\Omega)}_{3,J}$ and $\Phi^{2D}_{J}$ span the same finite element space $V_{J}=\mbox{span}(\Phi^{2D}_{J})$, the numerical solutions obtained by the traditional wavelet method using $\mathcal{B}^{2D,H^{1}_0(\Omega)}_{3,J}$  and the FEM $\Phi^{2D}_{J}$ are the same.

		\subsection{Handling nonhomogeneous first jump and/or Dirichlet boundary conditions}
		
		Some of the following examples have nonhomogeneous first jump condition and/or Dirichlet boundary condition. To handle them, we shall exploit the geometry of our interface curve and unit square domain with $(1/2,1/2)$ as its center. For the sake of discussion, we assume that the first jump condition is parameterized in terms of angle and $\Omega_-$ is away from $\partial \Omega$. Since the interface curve is smooth, we are able to radially extend the first jump condition outward and treat its restriction in $\Omega_+$ as an auxiliary solution. More specifically, for $(x,y) \in \R^2 \setminus \Omega_-$, we define
		\be \label{angle}
			\tilde{g}(x,y) := g\left(\Theta(x,y)\right),
			\quad
			\Theta(x,y):= \begin{cases}
				 \arctan\left(\tfrac{y-1/2}{x-1/2}\right), & \text{if} \; x>1/2,\\
				 \arctan\left(\tfrac{y-1/2}{x-1/2}\right) + \pi, & \text{if} \; x<1/2, \; y\ge 1/2,\\
				 \arctan\left(\tfrac{y-1/2}{x-1/2}\right) - \pi, & \text{if} \; x<1/2, \; y< 1/2,\\
				\pi/2, & \text{if} \; x=1/2, \; y>1/2,\\
				-\pi/2, & \text{if} \; x=1/2, \; y<1/2.
			\end{cases}
		\ee
		To handle this nonhomogeneous Dirichlet boundary condition, we build two more auxiliary solutions
		\begin{align*}
			\tilde{u}_{LR} & := (g_b(0,y) - \tilde{g}(0,y)) (1 - x) + (g_b(1,y) - \tilde{g}(1,y)) x,\\
			\tilde{u}_{BT} & := (g_b(x,0) - \tilde{g}(x,0) - \tilde{u}_{LR} (x,0))(1-y) + (g_b(x,1) - \tilde{g}(x,1) - \tilde{u}_{LR} (x,1)) y.
		\end{align*}
		Define the function $G$ such that
		\[
		G_+ = G \chi_{\Omega_+} := \tilde{g}\chi_{\Omega_+} + \tilde{u}_{LR} + \tilde{u}_{BT}
		\quad \text{and} \quad
		G_- = G \chi_{\Omega_-} := \tilde{u}_{LR} + \tilde{u}_{BT}.
		\]
		Next, we aim to find $\tilde{u}_J := \sum_{\eta \in \mathcal{B}^{S,H^1_0(\Omega)}_{J_0,J}} c_{\eta} \eta$ such that
		\[
		\langle a \nabla \tilde{u}_J, \nabla v\rangle_{\Omega} = \langle f, v\rangle_{\Omega} - \langle g_{\Gamma}, v \rangle_{\Gamma} - \langle a \nabla G, \nabla v \rangle , \quad \forall  v \in \mathcal{B}^{S,H^1_0(\Omega)}_{J_0,J}.
		\]
		Finally, we define our approximate solution as $u_J := \tilde{u}_J + \tilde{g} \chi_{\Omega_+} + \tilde{u}_{LR} + \tilde{u}_{BT}$.
		
		\subsection{Examples with known exact solutions $u$}
We present five examples here, where the exact solutions are known.
\cref{thm:converg} guarantees the condition numbers satisfy $\kappa\le C_w \|a\|_{L_\infty(\Omega)}\|a^{-1}\|_{L_\infty(\Omega)}$ in \eqref{cond:number}. In all the numerical examples, we indicate the numerically estimated constant $C_w$ in \eqref{cond:number}.

	\begin{example} \label{ex:circle}
		\normalfont
		We apply our wavelet method to \cite[Section 6]{GL19}, where we transform the original problem so that its domain is the unit square and increase the contrast of the discontinuous $a$. This problem is a typical test problem \eqref{model} with $a_{+}=10^6$,
		$a_{-}=1$,
		\be \label{gamma:circle}
		\Gamma = \{(x,y) \in \Omega \; : \; x(\theta)=\tfrac{1}{4}\cos(\theta)+\tfrac{1}{2}, \; y(\theta)=\tfrac{1}{4}\sin(\theta)+\tfrac{1}{2}, \; \theta \in [0,2\pi)\},
		\ee
		$g=0$, and $f, g_{\Gamma}$ are chosen such that the exact solution, $u$, is
		\[
		u_{+} = a_{+}^{-1} ((x-\tfrac{1}{2})^2 + (y-\tfrac{1}{2})^2)^{3/2} + 2^{-6}\left(a_{-}^{-1} - a_{+}^{-1}\right)
		\quad
		\text{and}
		\quad
		u_{-} = a_{-}^{-1} ((x-\tfrac{1}{2})^2 + (y-\tfrac{1}{2})^2)^{3/2}.
		\]
		This makes $g_{\Gamma} = 0$ on $\Gamma$, and the Dirichlet boundary condition, $g_b$, nonzero on $\partial \Omega$.
		\begin{table}[htbp]
			\begin{center}
				 \resizebox{\textwidth}{!}{%
				\begin{tabular}{c | c c c c c c | c c c c c}
					\hline
					& \multicolumn{6}{c}{$\mathcal{B}^{S,H^1_0(\Omega)}_{3,J}$ (ours)} \vline & \multicolumn{5}{c}{$\mathcal{B}^{2D,H^{1}_0(\Omega)}_{3,J}$ (traditional) or $\Phi^{2D}_{J}$ (FEM)}\\
					\hline
					$J$ & $N_{J}$ & $\kappa$ & $\frac{\|u_{J}-u\|_2}{\|u\|_{2}}$ & order & $\frac{\|\nabla u_J - \nabla u\|_2}{\|\nabla u\|_{2}}$ & order & $N_{J}$ & $\frac{\|u_J - u\|_2}{\|u\|_{2}}$ & order & $\frac{\|\nabla u_J - \nabla u\|_2}{\|\nabla u\|_{2}}$& order \\
					\hline
					4 & 2345 & 5.83E+6  & 1.64E-1 &  & 3.98E-1 &  &  225 & 7.11E-1  &  & 8.97E-1  &   \\
					5 & 10401 & 8.94E+6  & 3.75E-2 & 1.98 & 2.01E-1 & 0.917 & 961  & 4.24E-1  & 0.711 & 7.14E-1  & 0.315 \\
					6 & 43449 & 1.24E+7 & 8.38E-3 & 2.10 & 1.00E-1 & 0.970 & 3969  &  2.30E-1 &  0.863 & 5.42E-1  & 0.388 \\
					7 & 177169 &1.56E+7   & 2.35E-3 & 1.81 & 5.16E-2 & 0.947 & 16129  &  1.06E-1 & 1.10 & 3.71E-1  & 0.539\\
					\hline
				\end{tabular}%
				}
			\end{center}
			\caption{Numerical results for \cref{ex:circle}. The estimated constant $C_w$ in \eqref{cond:number} is less than $16$.}
			\label{ex:tab:circle}
		\end{table}
		See \cref{ex:tab:circle} for numerical results, and \cref{fig:ex:circle} for plots. This example aims to show that the high contrast in the diffusion coefficient $a$ results in large condition numbers, but they are still uniformly bounded, which is consistent with our main result \cref{thm:converg}. Additionally, it demonstrates that even though both jump conditions are equal to zero (i.e., $g=g_\Gamma=0$), the solution still has low regularity. If we compare the degrees of freedom of $\mathcal{B}^{S,H^1_0(\Omega)}_{3,J}$ and $\mathcal{B}^{2D,H^{1}_0(\Omega)}_{3,J}$ (or equivalently $\Phi^{2D}_{J}$) at each scale level, we observe that the former is only a fixed constant multiple of the latter for all scale levels and this constant is independent of the scale level.
		\begin{figure}[htbp]
			 \begin{overpic}[width=0.25\textwidth]{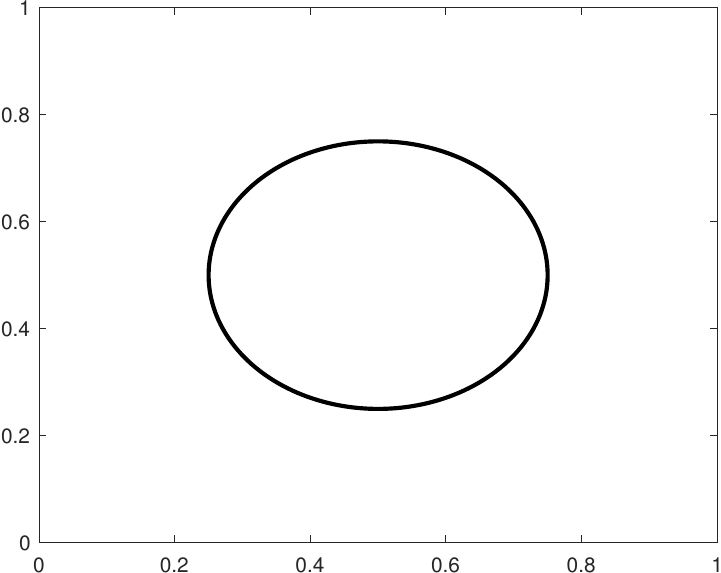}
			 	\put(38,68){$a_+ = 10^6 $}
			 	\put(40,38){$a_-=1$}
			 \end{overpic}
			 \includegraphics[width=0.3\textwidth]{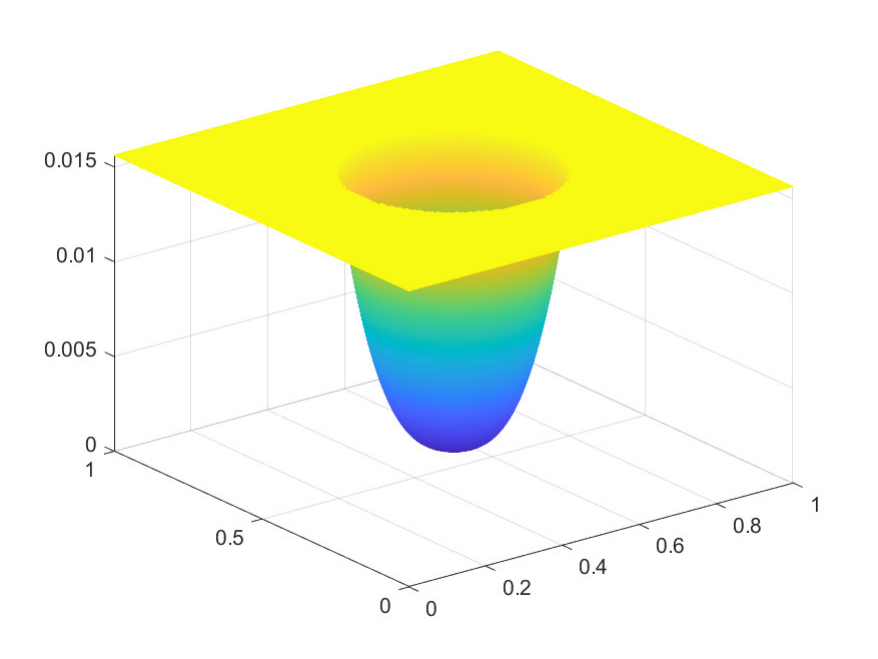}
			 \includegraphics[width=0.3\textwidth]{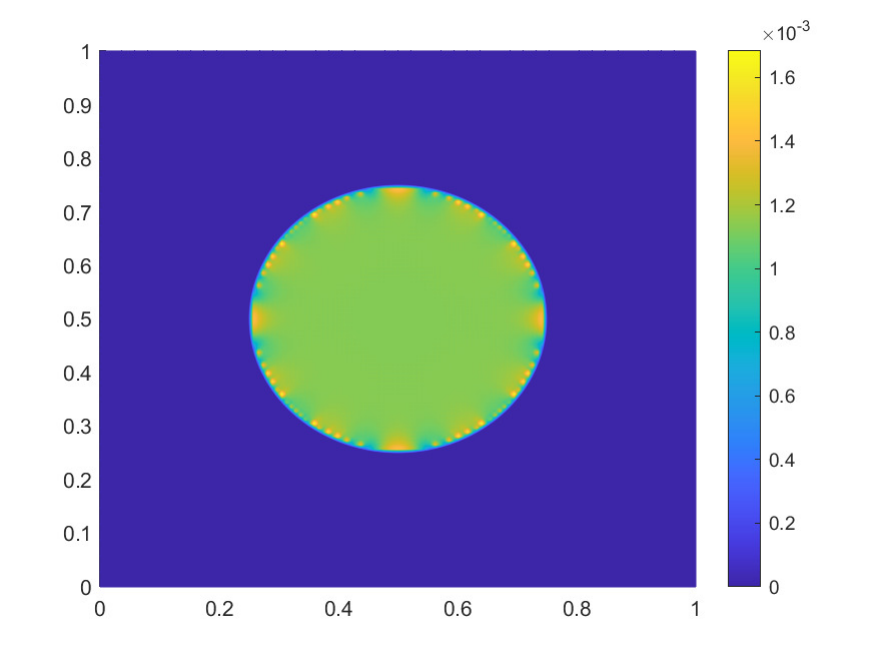}
			 \caption{\cref{ex:circle}. Left: the plot of $\Gamma$. Middle: the plot of the approximate solution at $J=7$, where $a_+ = 10^6$. Right: the plot of the error at $J=7$, where $a_+ = 10^6$.}
			\label{fig:ex:circle}
		\end{figure}
	\end{example}
	
	\begin{example} \label{ex:star:gonglili}
		\normalfont
		We apply our wavelet method to \cite[Example 2]{GLL08}, where we transform the original problem so that its domain is the unit square. More specifically, consider the general elliptic interface problem \eqref{model0}, where $a_+ \in \{10^2, 10^{-2}\}$, $a_- = (2x-1)^2 + (2y-1)^2 +1$,
		\[
		\Gamma = \{(x,y) \in \Omega : x(\theta) = \tfrac{1}{2}(\tfrac{1}{10} \sin(5\theta - \tfrac{\pi}{5}) + \tfrac{1}{2})\cos(\theta)+ \tfrac{1}{2},
		y(\theta) = \tfrac{1}{2}(\tfrac{1}{10} \sin(5\theta - \tfrac{\pi}{5}) + \tfrac{1}{2})\sin(\theta)+ \tfrac{1}{2}, \theta \in [0,2\pi)\},
		\]
		and $f, g, g_\Gamma$ are chosen such that the exact solution, $u$, is
		\[
			u_+ =  a_+^{-1} \left(\sin(2x-1) \cos(2y-1) + \log(\sqrt{(2x-1)^2 + (2y-1)^2})\right)
			\quad \text{and} \quad
			u_- = (2x-1)^2 + (2y-1)^2.
		\]
		This makes $g, g_\Gamma \neq 0$ on $\Gamma$ and the Dirichlet boundary condition, $g_b$, nonzero on $\partial \Omega$. Note that the exact solution $u$ is discontinuous across $\Gamma$.
See \cref{ex:tab:star:gonglili} for numerical results, and \cref{fig:ex:star:gonglili} for plots.
		\begin{table}[htbp]
			\begin{center}
				 \resizebox{\textwidth}{!}{%
					\begin{tabular}{c | c c c c c c | c c c c c}
						\hline
						 \multicolumn{12}{c}{$a_+ = 10^2$}\\
						\hline
						& \multicolumn{6}{c}{$\mathcal{B}^{S,H^1_0(\Omega)}_{3,J}$ (ours)} \vline & \multicolumn{5}{c}{$\mathcal{B}^{2D,H^{1}_0(\Omega)}_{3,J}$ (traditional) or $\Phi^{2D}_{J}$ (FEM)}\\
						\hline
						$J$ & $N_{J}$ & $\kappa$ & $\frac{\|u_J - u\|_2}{\|u\|_{2}}$ & order & $\frac{\|\nabla u_J - \nabla u\|_2}{\|\nabla u\|_{2}}$ & order & $N_{J}$ & $\frac{\|u_J - u\|_2}{\|u\|_{2}}$ & order & $\frac{\|\nabla u_J - \nabla u\|_2}{\|\nabla u\|_{2}}$& order \\
						\hline
						4 & 2847 & 3.48E+2 & 5.21E-2 &  & 2.59E-1 &  & 225 & 2.58E-1 &  & 5.76E-1 &  \\
						5 & 12539 & 4.36E+2 & 1.57E-2 & 1.63 & 1.46E-1 & 0.778 & 961 & 1.41E-1  & 0.834 & 4.04E-1 & 0.490\\
						6 & 52145 & 4.68E+2 & 8.28E-3 & 0.894 & 1.047E-1 & 0.468 & 3969 & 7.94E-2  & 0.805 & 3.01E-1 & 0.413\\
						7 & 212267 & 4.95E+2 & 9.47E-4 & 3.09 & 3.01E-2 & 1.78 & 16129 &  3.94E-2 & 1.00 & 2.07E-1 & 0.535\\
						\hline
						 \multicolumn{12}{c}{$a_+ = 10^{-2}$}\\
						\hline
						& \multicolumn{6}{c}{$\mathcal{B}^{S,H^1_0(\Omega)}_{3,J}$ (ours)} \vline & \multicolumn{5}{c}{$\mathcal{B}^{2D,H^{1}_0(\Omega)}_{3,J}$ (traditional) or $\Phi^{2D}_{J}$ (FEM)}\\
						\hline
						$J$ & $N_{J}$ & $\kappa$ & $\frac{\|u_J - u\|_2}{\|u\|_{2}}$ & order & $\frac{\|\nabla u_J - \nabla u\|_2}{\|\nabla u\|_{2}}$ & order & $N_{J}$ & $\frac{\|u_J - u\|_2}{\|u\|_{2}}$ & order & $\frac{\|\nabla u_J - \nabla u\|_2}{\|\nabla u\|_{2}}$& order \\
						\hline
						4 & 2847 & 3.25E+3 & 4.09E-2 &  & 2.06E-1 &  & 225 & 2.05E-1  &  &  5.12E-1 &  \\
						5 & 12539 & 3.62E+3 & 7.89E-3 & 2.24 & 1.21E-1 & 0.729 & 961 & 1.07E-1  & 0.893 & 3.85E-1  & 0.392\\
						6 & 52145 & 3.80E+3 & 3.46E-3 & 1.16 & 7.87E-2 & 0.598 & 3969 & 5.25E-2  & 1.01 & 2.61E-1  & 0.548\\
						7 & 212267 & 3.82E+3 & 5.40E-4 & 2.65 & 2.75E-2 & 1.50 & 16129 &  2.62E-2 & 0.990 & 1.87E-1  & 0.477\\
						\hline
					\end{tabular}%
				}
			\end{center}
			\caption{Numerical results for \cref{ex:star:gonglili}. The estimated constant $C_w$ in \eqref{cond:number} is less than 6 for $a_+ = 10^2$ and is less than 14 for $a_+=10^{-2}$. As indicated in \cref{fig:ex:star:gonglili}, the flower-shaped interface $\Gamma$ has relatively large curvatures and the discontinuous approximate solution $u_h$ has large jumps across $\Gamma$.}
			 \label{ex:tab:star:gonglili}
		\end{table}
		\begin{figure}[htbp]
			 \begin{overpic}[width=0.25\textwidth]{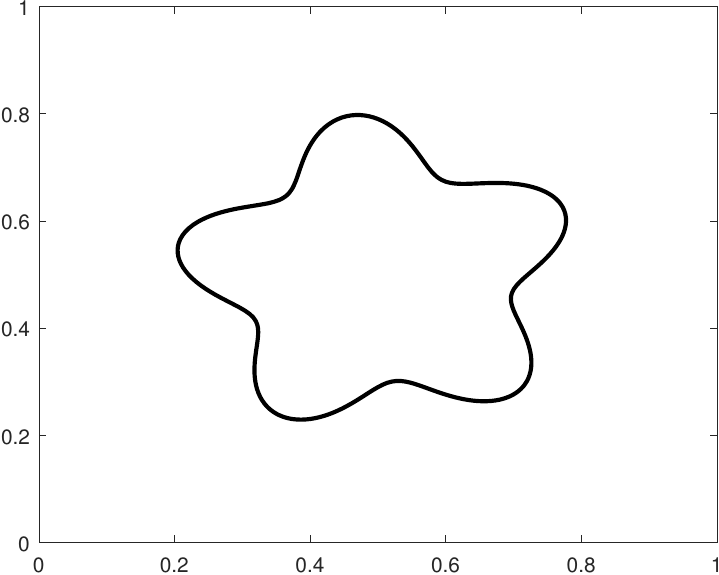}
				\put(23,68){$a_+ \in \{10^2,10^{-2}\}$}
				\put(47,40){$a_-$}
			\end{overpic}
			 \includegraphics[width=0.3\textwidth]{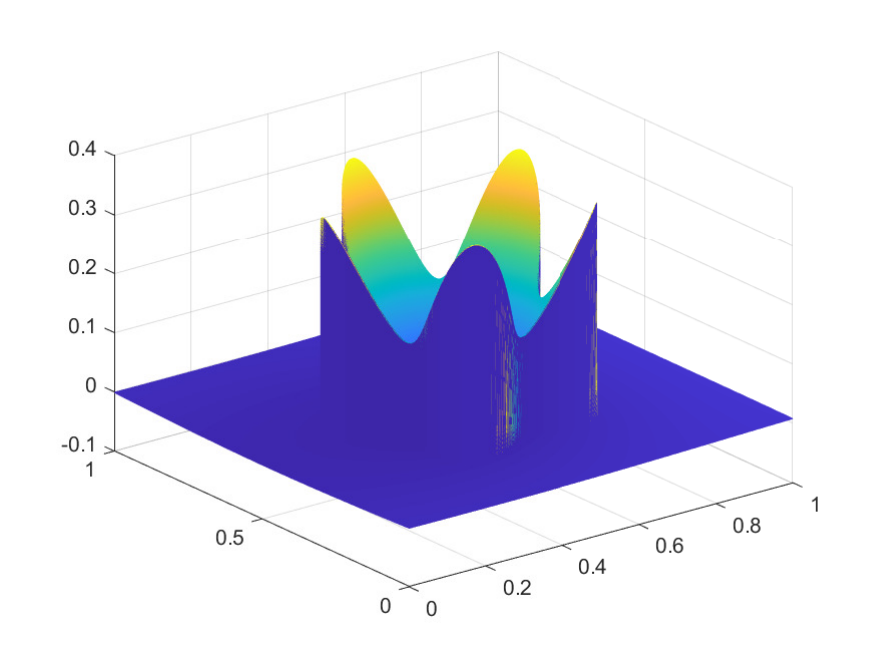}
			 \includegraphics[width=0.3\textwidth]{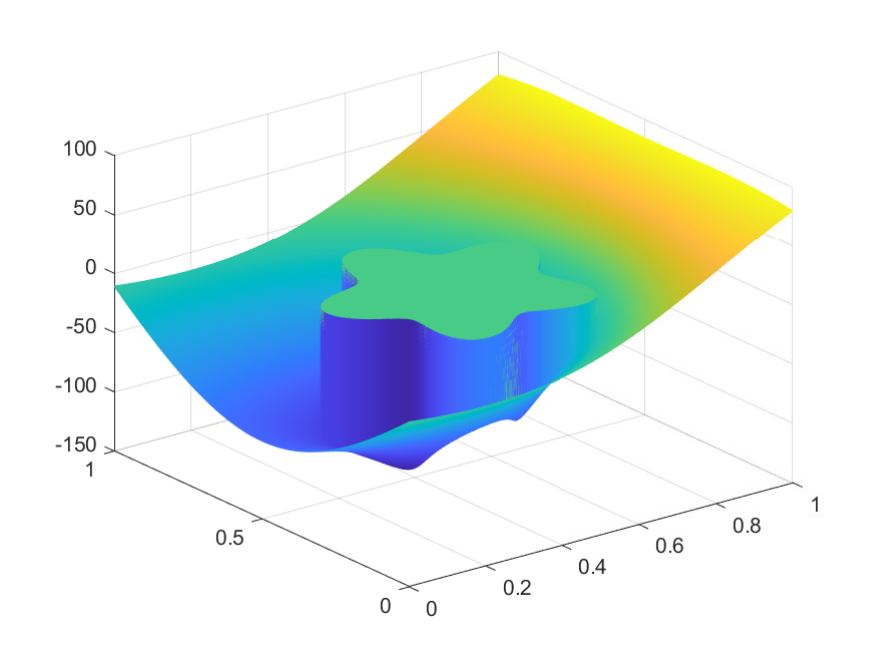}
			 \caption{\cref{ex:star:gonglili}. Left: the plot of $\Gamma$. Middle: the plot of the approximate solution at $J=7$ with $a_+ = 10^2$. Right: the plot of the approximate solution at $J=7$ with $a_+ = 10^{-2} $.}
			 \label{fig:ex:star:gonglili}
		\end{figure}
	\end{example}
	
	\begin{example} \label{ex:flower5}
		\normalfont
		Consider the general elliptic interface model problem \eqref{model0}, 
		where $a_+ = 2 + \sin(5(x-1/2)) \sin(5(y-1/2))$, $a_- = 10^{3} a_+$,
		\[
		\Gamma = \{(x,y) \in \Omega \; : \; x(\theta) = (\tfrac{1}{5} + \tfrac{2}{25}\sin(5 \theta)) \cos(\theta) + \tfrac{1}{2}, \; y(\theta) = (\tfrac{1}{5} + \tfrac{2}{25}\sin(5 \theta)) \sin(\theta) + \tfrac{1}{2},
		\; \theta \in [0,2\pi)\},
		\]
		and $f, g, g_\Gamma$ are chosen such that the exact solution, $u$, is
		\begin{align*}
		u_+ & = \sin(10x-5) \sin(10y-5)\left((x-\tfrac{1}{2})^2 + (y-\tfrac{1}{2})^2 - (\tfrac{1}{5} + \tfrac{2}{25}\sin(5 \Theta(x,y)))^2\right) + 1,\\
		u_- & = 10^{-3} \sin(10x-5) \sin(10y-5)\left((x-\tfrac{1}{2})^2 + (y-\tfrac{1}{2})^2 - (\tfrac{1}{5} + \tfrac{2}{25}\sin(5 \Theta(x,y)))^2\right) + 31,
		\end{align*}
		where $\Theta$ is defined as in \eqref{angle} for $x,y \in \Omega$ and $\Theta(1/2,1/2):=0$. This makes $g \neq 0$, $g_{\Gamma}=0$ on $\Gamma$ (note that the first jump condition is not zero but the second jump condition vanishes in the general elliptic interface problem \eqref{model0}), and the Dirichlet boundary condition, $g_b$, nonzero on $\partial \Omega$. Note that the exact solution $u$ is discontinuous across $\Gamma$.
See \cref{ex:tab:flower5} for numerical results and \cref{ex:fig:flower5} for plots.
		\begin{table}[htbp]
			\begin{center}
				 \resizebox{\textwidth}{!}{%
					\begin{tabular}{c | c c c c c c | c c c  c c}
						\hline
						& \multicolumn{6}{c}{$\mathcal{B}^{S,H^1_0(\Omega)}_{3,J}$ (ours)} \vline & \multicolumn{5}{c}{$\mathcal{B}^{2D,H^{1}_0(\Omega)}_{3,J}$ (traditional) or $\Phi^{2D}_{J}$ (FEM)}\\
						\hline
						$J$ & $N_{J}$ & $\kappa$ & $\frac{\|u_J - u\|_2}{\|u\|_{2}}$ & order & $\frac{\|\nabla u_J - \nabla u\|_2}{\|\nabla u\|_{2}}$ & order & $N_{J}$ & $\frac{\|u_J - u\|_2}{\|u\|_{2}}$ & order & $\frac{\|\nabla u_J - \nabla u\|_2}{\|\nabla u\|_{2}}$& order \\
						\hline
						4 & 2833 & 3.07E+4 & 3.85E-2 &  & 2.03E-1 &  & 225 & 5.68E-2 &  & 2.34E-1 & \\
						5 & 13589 & 4.04E+4 & 1.04E-2 & 1.68 & 1.08E-1 & 0.810 & 961 & 2.50E-2 & 1.13 & 1.26E-1 & 0.857 \\
						6 & 57317 & 4.50E+4 & 2.70E-3 & 1.87 & 5.55E-2 & 0.929 & 3969 &  1.38E-2 & 0.841 & 7.35E-2 & 0.755 \\
						7 & 233583 & 4.54E+4 & 6.88E-4 & 1.95 & 2.75E-2 & 1.00 & 16129 &  7.68E-3 & 0.833 & 4.58E-2 & 0.677\\
						\hline
					\end{tabular}%
				}
			\end{center}
			\caption{Numerical results for \cref{ex:flower5}. The estimated constant $C_w$ in \eqref{cond:number} is less than $16$.}
			\label{ex:tab:flower5}
		\end{table}
		\begin{figure}[htbp]
			 \begin{overpic}[width=0.25\textwidth]{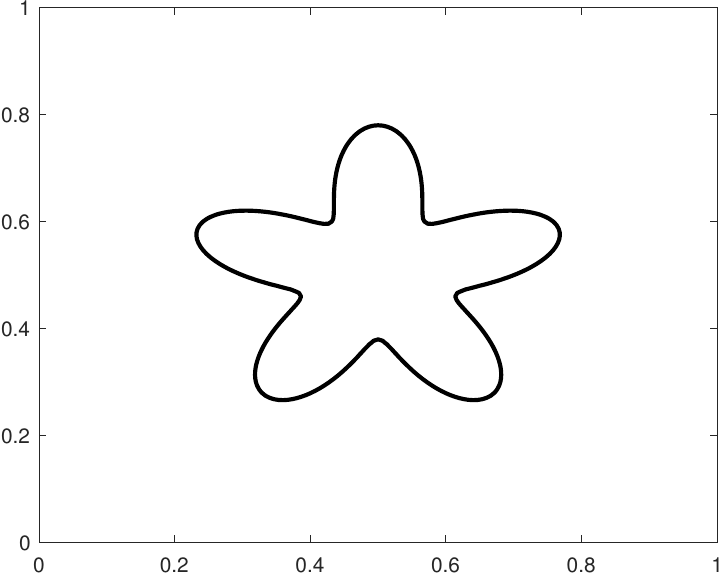}
			 	\put(34,42){\tiny $a_-=10^{3}a_+$}
			 	\put(10,73){\tiny $a_+ = $}
			 	\put(10,66){\tiny $ 2+\sin(5(x-\tfrac{1}{2}))\sin(5(y-\tfrac{1}{2}))$}
			 \end{overpic}
			 \includegraphics[width=0.3\textwidth]{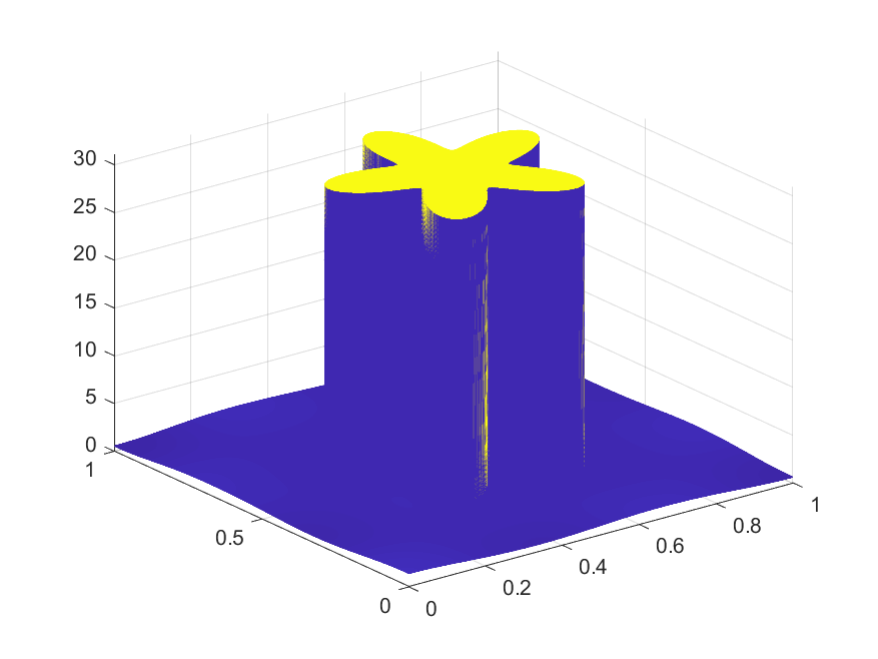}
			 \includegraphics[width=0.3\textwidth]{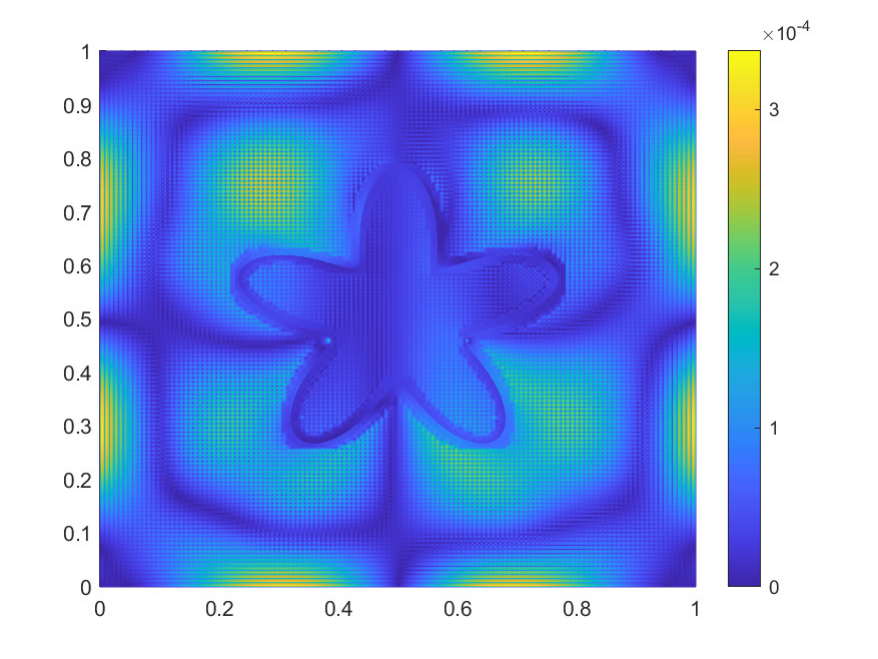}
			 \caption{\cref{ex:flower5}. Left: the plot of $\Gamma$. Middle: the plot of the approximate solution at $J=7$. Right: the plot of the error at $J=7$.}
			\label{ex:fig:flower5}
		\end{figure}
	\end{example}
	
	\begin{example} \label{ex:flower8}
		\normalfont
		Consider the general elliptic interface problem \eqref{model0}, where $a_+ = 1$, $a_-=10^{-3}$,
		\[
		\Gamma = \{(x,y) \in \Omega \; : \; x(\theta) = \tfrac{1}{4}(\tfrac{\pi}{3} + \tfrac{2}{5}\sin(8 \theta))\cos(\theta) + \tfrac{1}{2}, \; y(\theta) = \tfrac{1}{4}(\tfrac{\pi}{3} + \tfrac{2}{5}\sin(8 \theta))\sin(\theta) + \tfrac{1}{2}, \; \theta \in [0,2\pi)\},
		\]
		and $f,g,g_{\Gamma}$ are chosen such that the exact solution, $u$, is
		\[
		u_{+}=\cos(4x-2), \quad u_{-}=10^{3}\sin(4y-2)+1500.
		\]
		This makes $g, g_{\Gamma} \neq 0$ on $\Gamma$ (and hence both jump conditions do not vanish), and the Dirichlet boundary condition, $g_b$, nonzero on $\partial \Omega$. Note that the exact solution $u$ is discontinuous across $\Gamma$.
See \cref{ex:tab:flower8} for numerical results and \cref{ex:fig:flower8} for plots.
		\begin{table}[htbp]
			\begin{center}
				 \resizebox{\textwidth}{!}{%
					\begin{tabular}{c | c c c c c c | c c c  c c}
						\hline
						& \multicolumn{6}{c}{$\mathcal{B}^{S,H^1_0(\Omega)}_{3,J}$ (ours)} \vline & \multicolumn{5}{c}{$\mathcal{B}^{2D,H^{1}_0(\Omega)}_{3,J}$ (traditional) or $\Phi^{2D}_{J}$ (FEM)}\\
						\hline
						$J$ & $N_{J}$ & $\kappa$ & $\frac{\|u - u_{J}\|_2}{\|u\|_{2}}$ & order & $\frac{\|\nabla u - \nabla u_{J}\|_2}{\|\nabla u\|_{2}}$ & order & $N_{J}$ & $\frac{\|u - u_{J}\|_2}{\|u\|_{2}}$ & order & $\frac{\|\nabla u - \nabla u_{J}\|_2}{\|\nabla u\|_{2}}$& order \\
						\hline
						4 & 4585 & 5.33E+3 & 1.27E-1 &      & 3.85E-1 &  & 225 & 2.47E-1 &  & 7.19E-1 & \\
						5 & 22857 & 8.23E+3 & 3.01E-2 & 1.79 & 1.98E-1 & 0.828 & 961 & 1.82E-1 & 0.417 & 5.30E-1 & 0.420\\
						6 & 97497 & 9.71E+3 & 7.79E-3 & 1.86 & 9.27E-2 & 1.05 & 3969 & 9.62E-2 & 0.900 & 3.51E-1 & 0.580 \\
						7 & 398713 & 1.08E+4 & 1.58E-3 & 2.26 & 4.71E-2 & 0.961 & 16129 & 5.24E-2 & 0.866 & 2.44E-1 & 0.519\\
						\hline
					\end{tabular}%
				}
				\caption{Numerical results for \cref{ex:flower8}. The estimated constant $C_w$ in \eqref{cond:number} is less than $11$.}
				 \label{ex:tab:flower8}
			\end{center}
		\end{table}
		\begin{figure}[htbp]
			 \begin{overpic}[width=0.25\textwidth]{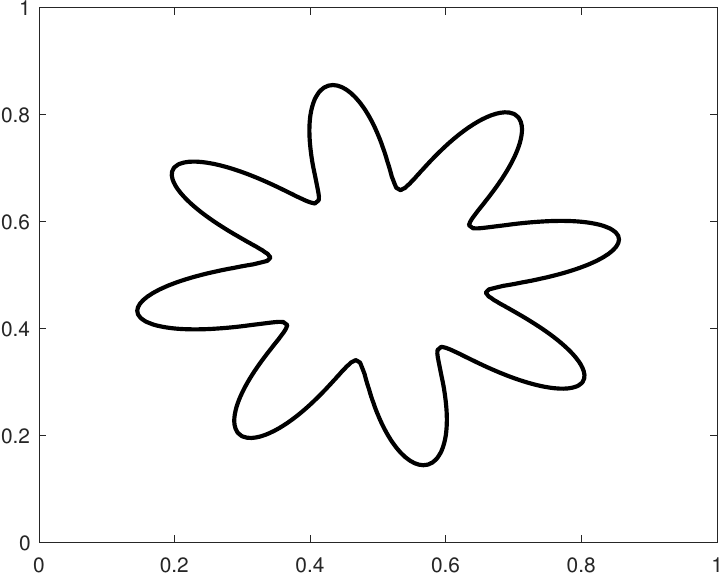}
			 	\put(40,40){\tiny $a_-=10^{-3}$}
			 	\put(40,71){$a_+=1$}
			 \end{overpic}
			 \includegraphics[width=0.3\textwidth]{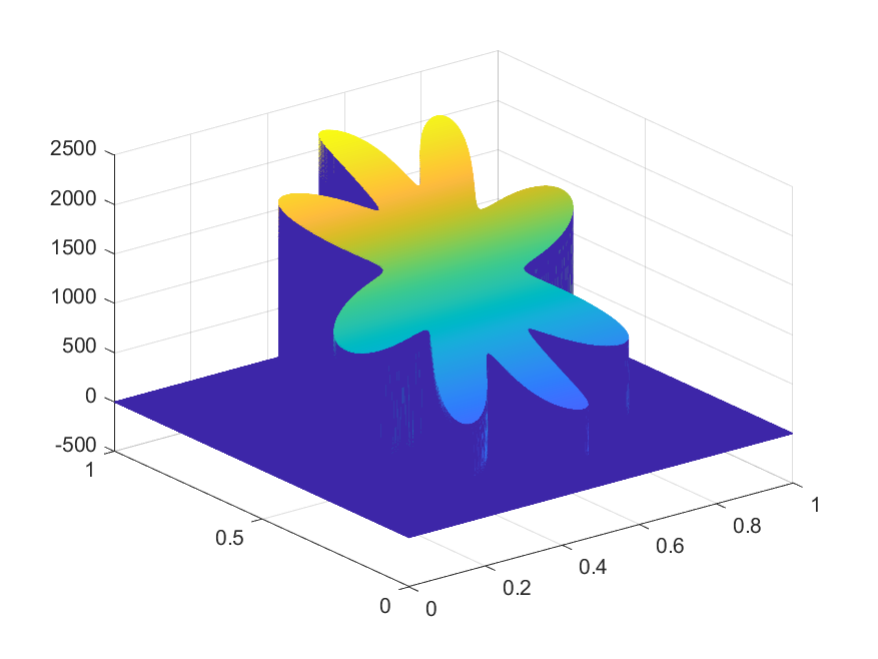}
			 \includegraphics[width=0.3\textwidth]{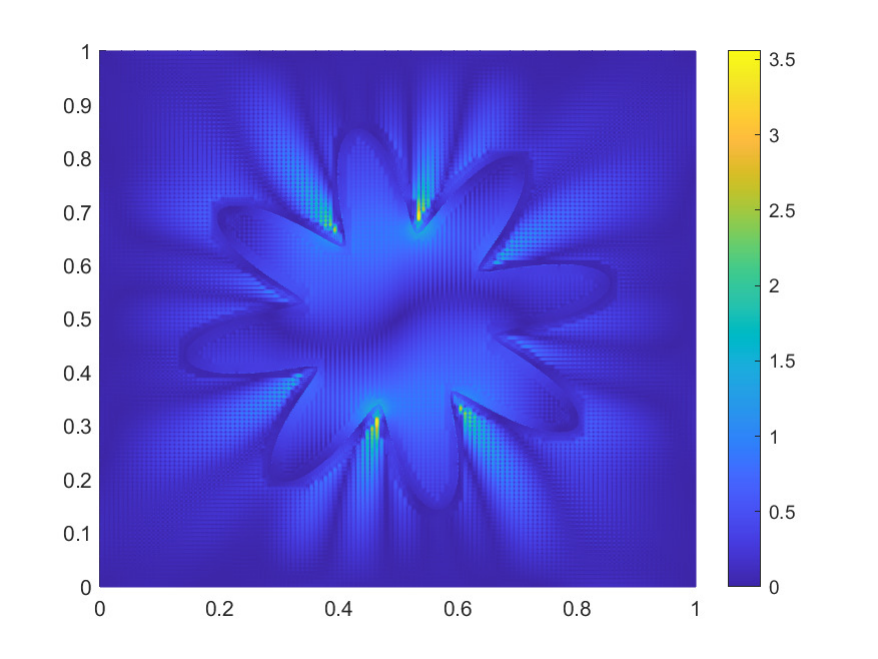}
			 \caption{\cref{ex:flower8}. Left: the plot of $\Gamma$. Middle: the plot of the approximate solution at $J=7$. Right: the plot of the error at $J=7$.}
			\label{ex:fig:flower8}
		\end{figure}
	\end{example}
	
	\begin{example} \label{ex:flower6}
		\normalfont
		Consider the model problem \eqref{model}, where $a_+ = 10^{4}$, $a_-=1$,
		\[
		\begin{aligned}
		\Gamma = \{(x,y) \in \Omega \; :\; & x(\theta)=10^{-1/2}(1 + \tfrac{2}{5}\sin(6 \theta))^{-1/4}\cos(\theta)+\tfrac{1}{2}, \\
		& y(\theta)=10^{-1/2}(1 + \tfrac{2}{5}\sin(6 \theta))^{-1/4} \sin(\theta) +\tfrac{1}{2}, \; \theta \in [0,2\pi)\},
		\end{aligned}
		\]
		and $f,g_{\Gamma}$ are chosen such that the exact solution, $u$, is
		\[
		u_{+} = a_{+}^{-1}(((x-\tfrac{1}{2})^2 + (y-\tfrac{1}{2})^2)^2 (1 + \tfrac{2}{5}\sin(6 \Theta(x,y))) - 10^{-2}), \quad
		u_{-} = a_{-}^{-1} a_{+} u_{+},
		\]
		where $\Theta$ is defined as in \eqref{angle} for $x,y \in \Omega$ and $\Theta(1/2,1/2):=0$. This makes $g = g_{\Gamma} = 0$ on $\Gamma$, and the Dirichlet boundary condition, $g_b$, nonzero on $\partial \Omega$. This is another example, which once again demonstrates that even though both jump conditions are equal to zero (i.e., $g=g_\Gamma=0$), the solution still has low regularity.
See \cref{ex:tab:flower6} for numerical results and \cref{ex:fig:flower6} for plots.
		\begin{table}[htbp]
			\begin{center}
				 \resizebox{\textwidth}{!}{%
					\begin{tabular}{c | c c c c c c | c c c  c c}
						\hline
						& \multicolumn{6}{c}{$\mathcal{B}^{S,H^1_0(\Omega)}_{3,J}$ (ours)} \vline & \multicolumn{5}{c}{$\mathcal{B}^{2D,H^{1}_0(\Omega)}_{3,J}$ (traditional) or $\Phi^{2D}_{J}$ (FEM)}\\
						\hline
						$J$ & $N_{J}$ & $\kappa$ & $\frac{\|u_J - u\|_2}{\|u\|_{2}}$ & order & $\frac{\|\nabla u_J - \nabla u\|_2}{\|\nabla u\|_{2}}$ & order & $N_{J}$ & $\frac{\|u_J - u\|_2}{\|u\|_{2}}$ & order & $\frac{\|\nabla u_J - \nabla u\|_2}{\|\nabla u\|_{2}}$& order \\
						\hline
						4 & 3401 & 7.26E+4 & 1.19E-1 &   & 3.77E-1 &  & 225 & 6.02E-1 &  & 8.37E-1 & \\
						5 & 14361 & 9.47E+4 & 2.87E-2 & 1.96 & 1.97E-1 & 0.901 & 961 & 3.90E-1 & 0.598 & 7.10E-1 & 0.228 \\
						6 & 59361 & 1.21E+5 & 7.92E-3 & 1.82  & 1.04E-1 & 0.903 & 3969 & 2.04E-1 & 0.914 & 5.16E-1 & 0.449 \\
						7 & 241409 & 1.36E+5 & 2.08E-3 & 1.91  & 4.97E-2 & 1.04 & 16129 & 9.68E-2 & 1.06 & 3.60E-1 & 0.515\\
						\hline
					\end{tabular}%
				}
			\end{center}
			\caption{Numerical results for \cref{ex:flower6}. The estimated constant $C_w$ in \eqref{cond:number} is less than $14$.}
			\label{ex:tab:flower6}
		\end{table}
		\begin{figure}[htbp]
			 \begin{overpic}[width=0.25\textwidth]{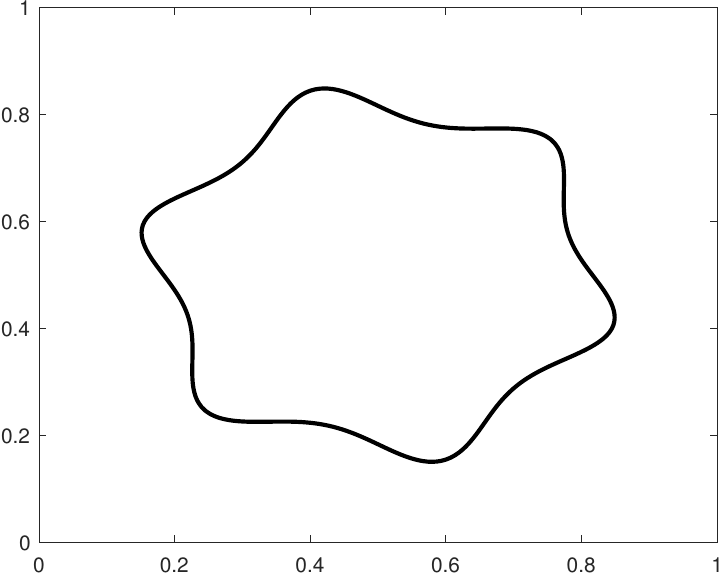}
			 	\put(40,40){$a_-=1$}
			 	 \put(35,70){$a_+=10^4$}
			 \end{overpic}
			 \includegraphics[width=0.3\textwidth]{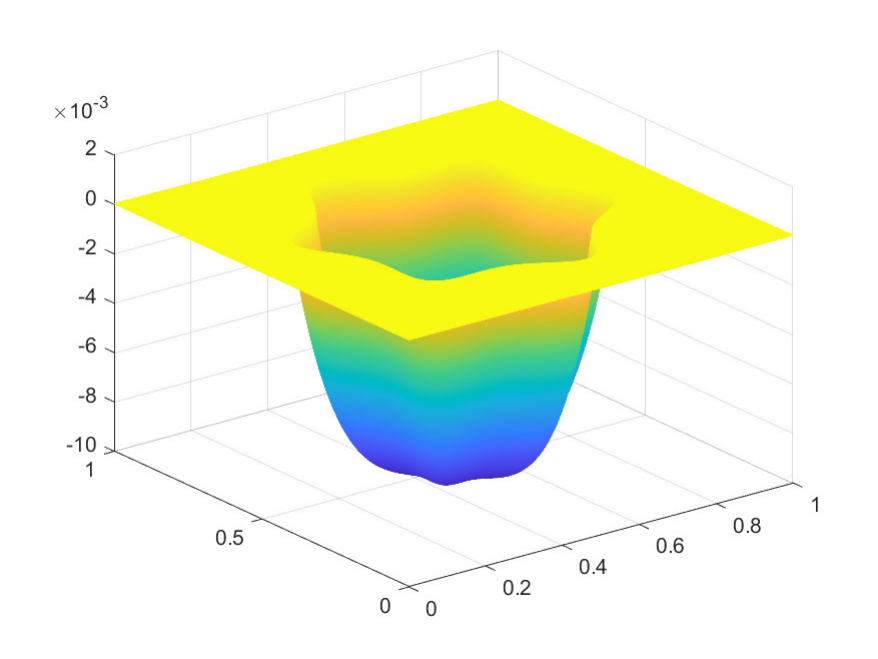}
			 \includegraphics[width=0.3\textwidth]{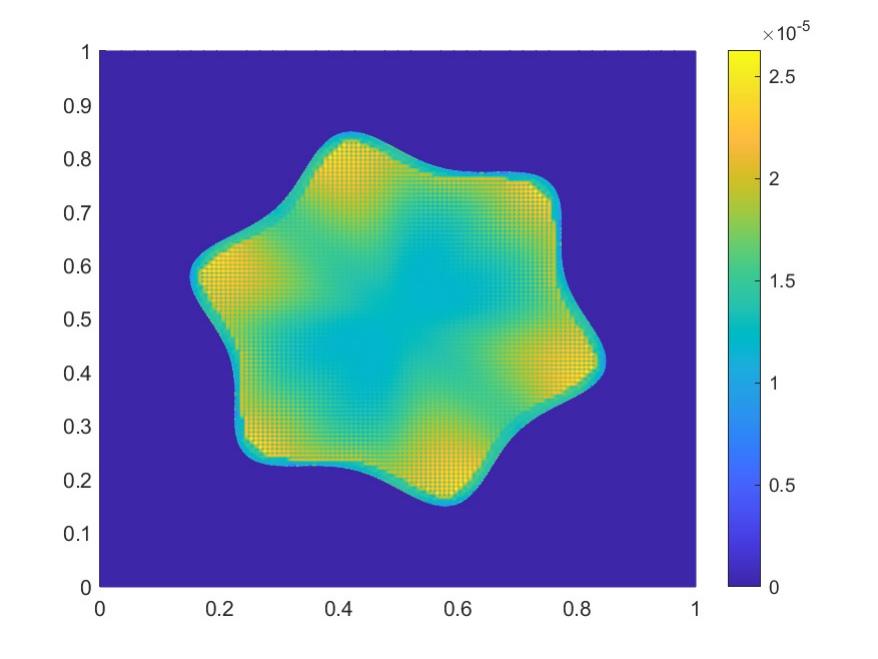}
			 \caption{\cref{ex:flower6}. Left: the plot of $\Gamma$. Middle: the plot of the approximate solution at $J=7$. Right: the plot of the error at $J=7$.}
			\label{ex:fig:flower6}
		\end{figure}
	\end{example}
	
	\subsection{Examples with unknown exact solutions $u$} We present two examples, where the exact solutions $u$ are unknown. Recall that the reference solutions $u^{\text{ref}}$ will be computed using $\mathcal{B}^{S,H^1_0(\Omega)}_{3,8}$.

	\begin{example} \label{ex:circle2}
		\normalfont
		Consider the model problem \eqref{model}, where $a_{+}=1$, $a_{-}=10^4$, $\Gamma$ is defined as in \eqref{gamma:circle}, $f=-16$, and $g=g_{\Gamma}=g_b=0$ (and hence both jump conditions vanish). The exact solution, $u$, is unknown. Here, we observe that even though both jump conditions are equal to zero (i.e., $g=g_\Gamma=0$), the solution still has low regularity. See
		 \cref{ex:tab:circle2,ex:tab:gmres:circle}
		for numerical results and \cref{ex:fig:circle2} for plots.
		\cref{ex:tab:gmres:circle} demonstrates that the number of GMRES iterations required to reach the tolerance level $10^{-8}$ is smaller compared to the standard FEM case and is uniformly bounded irrespective of the matrix size. This is due to the fact that the wavelet coefficient matrices have small condition numbers that are uniformly bounded. On the other hand, in the case of standard FEM, the number of GMRES iterations required to reach a tolerance level of $10^{-8}$ doubles with each increase in the scale level.
		\begin{table}[htbp]
			\begin{center}
				 \resizebox{\textwidth}{!}{%
					\begin{tabular}{c | c c c c c c | c c c  c c}
						\hline
						& \multicolumn{6}{c}{$\mathcal{B}^{S,H^1_0(\Omega)}_{3,J}$ (ours)} \vline & \multicolumn{5}{c}{$\mathcal{B}^{2D,H^{1}_0(\Omega)}_{3,J}$ (traditional) or $\Phi^{2D}_{J}$ (FEM)}\\
						\hline
						$J$ & $N_{J}$ & $\kappa$ & $\|u_J-u^{\text{ref}}\|_2$ & order & $\|\nabla u_J-\nabla u^{\text{ref}}\|_2$ & order & $N_{J}$ & $\|u_J-u^{\text{ref}}\|_2$ & order & $\|\nabla u_J-\nabla u^{\text{ref}}\|_2$& order \\
						\hline
						4 & 2345 & 4.81E+5 & 5.37E-3 &   & 2.55E-1 &  & 225 & 3.13E-2 &  & 5.09E-1 & \\
						5 & 10401 & 5.35E+5 & 1.39E-3 & 1.81  & 1.31E-1 & 0.891 & 961 & 1.66E-2 & 0.869 & 3.48E-1 & 5.24E-1\\
						6 & 43449 & 5.79E+5 & 3.41E-4 & 1.96 & 6.33E-2 & 1.02 & 3969 & 9.06E-3 & 0.856 & 2.52E-1 & 4.53E-1\\
						\hline
					\end{tabular}%
				}
			\end{center}
			\caption{Numerical results for \cref{ex:circle2}. The estimated constant $C_w$ in \eqref{cond:number} is less than $7$.}
			\label{ex:tab:circle2}
		\end{table}
		\begin{table}[htbp]
			\begin{tabular}{c | c c c c | c c c c }
				\hline
				& \multicolumn{4}{c}{$\mathcal{B}^{S, H^1_0(\Omega)}_{3,J}$ (ours)} \vline & \multicolumn{4}{c}{$\Phi^{2D}_{J}$ (FEM)}\\
				\hline
				$J$ & 4 & 5 & 6 & 7 & 6 & 7 & 8 & 9 \\
				\hline
				$N_J$ & 2345 & 10401 & 43449 & 177169 & 3969 & 16129 & 65025 & 261121 \\
				$\#$ of iterations & 779 & 1433 & 2036 & 2512 & 877 & 2316 & 6235 & 16291 \\
				\hline
			\end{tabular}
			\caption{The number of GMRES iterations required to reach the tolerance level of $10^{-8}$ for \cref{ex:circle2} with $a_+ = 1$ and $a_- = 10^4$.}
			 \label{ex:tab:gmres:circle}
		\end{table}
		\begin{figure}[htbp]
			 \begin{overpic}[width=0.25\textwidth]{circle_gamma.pdf}
			 	\put(38,68){$a_+ = 1$}
			 	\put(36,38){$a_- = 10^4$}
			 \end{overpic}
			 \includegraphics[width=0.3\textwidth]{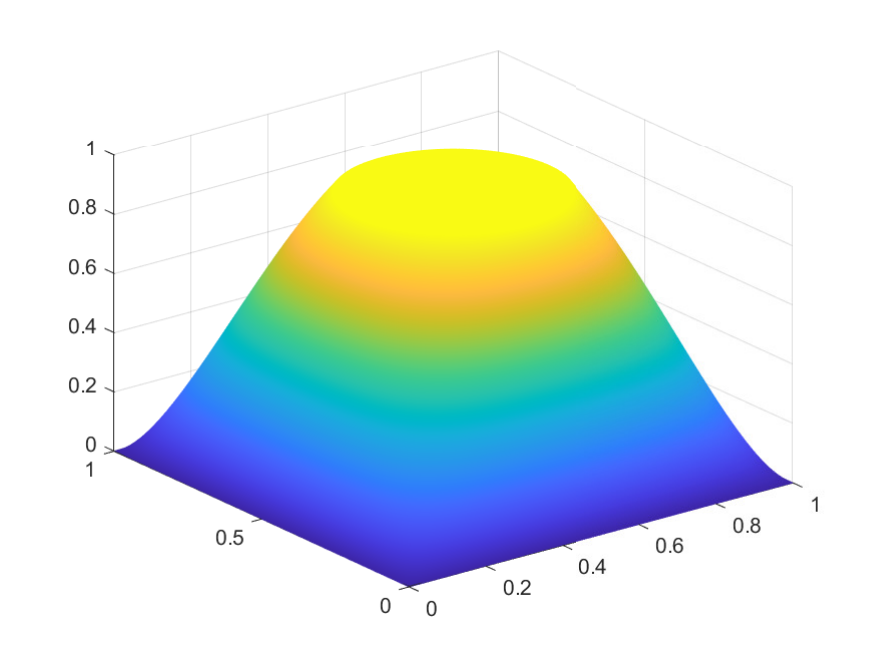}
			 \includegraphics[width=0.3\textwidth]{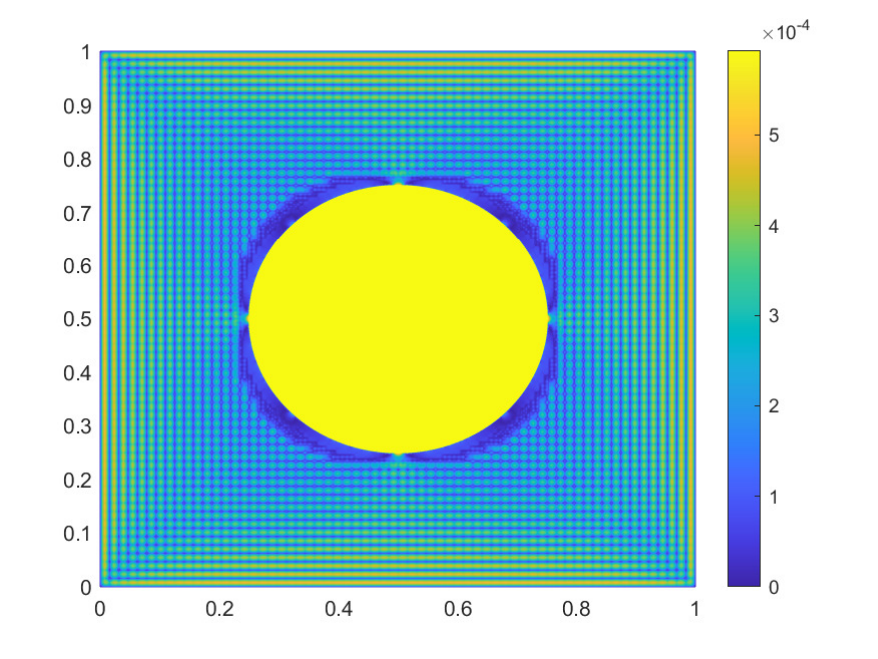}
			 \caption{\cref{ex:circle2}. Left: the plot of $\Gamma$. Middle: the plot of $u^{\text{ref}}$, which is the reference solution formed by $\mathcal{B}^{S,H^1_0(\Omega)}_{3,8}$.
				Right: the plot of the error $|u_6 - u^{\text{ref}}|$.}
			\label{ex:fig:circle2}
		\end{figure}
	\end{example}
	
	\begin{example} \label{ex:flower3}
		\normalfont
		Consider the general elliptic interface problem \eqref{model0}, where $a_+ = 10^3(2+\cos(4x-2) \cos(4y-2))$ and $a_- = 2+\cos(4x-2) \cos(4y-2)$,
		\[
		\begin{aligned}
		\Gamma = \{(x,y) \in \Omega \; : \; & x(\theta) = \tfrac{1}{2}(\tfrac{1}{2} + \tfrac{1}{4} \sin(3\theta))\cos(\theta)+\tfrac{1}{2},\\
		& y(\theta) = \tfrac{1}{2}(\tfrac{1}{2} + \tfrac{1}{4} \sin(3\theta))\sin(\theta)+\tfrac{1}{2}, \; \theta \in [0,2\pi) \},
		\end{aligned}
		\]
		$f_+ = -16 \sin(\pi (4x-2)) \sin(\pi (4y-2))$, $f_- = -16 \cos(\pi (4x-2)) \cos(\pi (4y-2))$, $g_b =0$, and $g = -\sin(\theta)-1$, $g_{\Gamma} = \cos(\theta)$ for $\theta \in [0,2\pi)$. Note that both jump conditions do not vanish.
The exact solution $u$ is unknown and is discontinuous across the interface $\Gamma$ due to nonzero $g$ for the first jump condition. See \cref{ex:tab:flower3} for numerical results and \cref{ex:fig:flower3} for plots.
		\begin{table}[htbp]
			\begin{center}
				 \resizebox{\textwidth}{!}{%
					\begin{tabular}{c | c c c c c c | c c c  c c}
						\hline
						& \multicolumn{6}{c}{$\mathcal{B}^{S,H^1_0(\Omega)}_{3,J}$ (ours)} \vline & \multicolumn{5}{c}{$\mathcal{B}^{2D,H^{1}_0(\Omega)}_{3,J}$ (traditional) or $\Phi^{2D}_{J}$ (FEM)}\\
						\hline
						$J$ & $N_{J}$ & $\kappa$ & $\|u_J-u^{\text{ref}}\|_2$ & order & $\|\nabla u_J-\nabla u^{\text{ref}}\|_2$ & order & $N_{J}$ & $\|u_J-u^{\text{ref}} \|_2$ & order & $\|\nabla u_J-\nabla u^{\text{ref}}\|_2$& order \\
						\hline
						4 & 3383 & 5.86E+3 & 6.51E-3 &   & 3.80E-1 &  & 225 & 4.63E-2 &  & 1.00 & \\
						5 & 14775 & 7.63E+3 & 1.77E-3 & 1.76  & 1.94E-1 & 0.908 & 961 & 2.27E-2 & 0.982 & 6.80E-1 & 5.42E-1 \\
						6 & 61277 & 8.65E+3 & 4.50E-4 &  1.92 & 9.56E-2 & 0.997 & 3969 & 1.31E-2 & 0.778 & 5.37E-1 & 3.33E-1 \\
						\hline
					\end{tabular}%
				}
			\end{center}
			\caption{Numerical results for \cref{ex:flower3}. The estimated constant $C_w$ in \eqref{cond:number} is less than $3$.}
			\label{ex:tab:flower3}
		\end{table}
		\begin{figure}[htbp]
			 \begin{overpic}[width=0.25\textwidth]{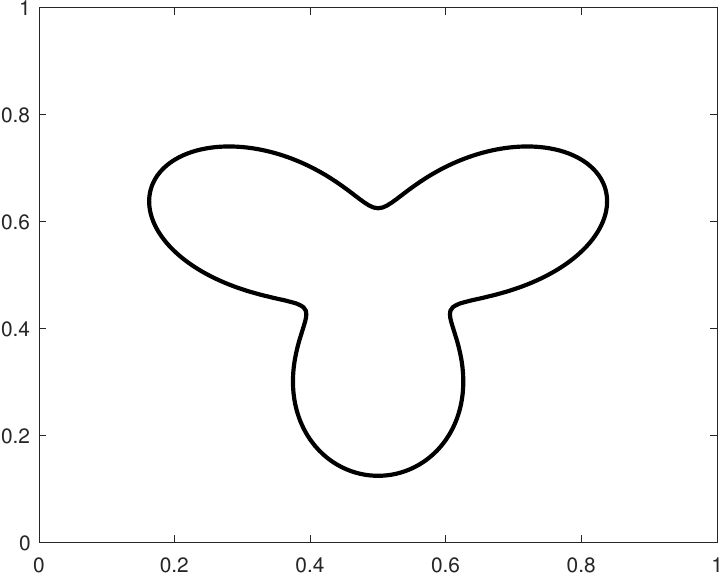}
			 	\put(47,40){$a_-$}
			 	\put(47,70){$a_+$}
			 \end{overpic}
			 \includegraphics[width=0.3\textwidth]{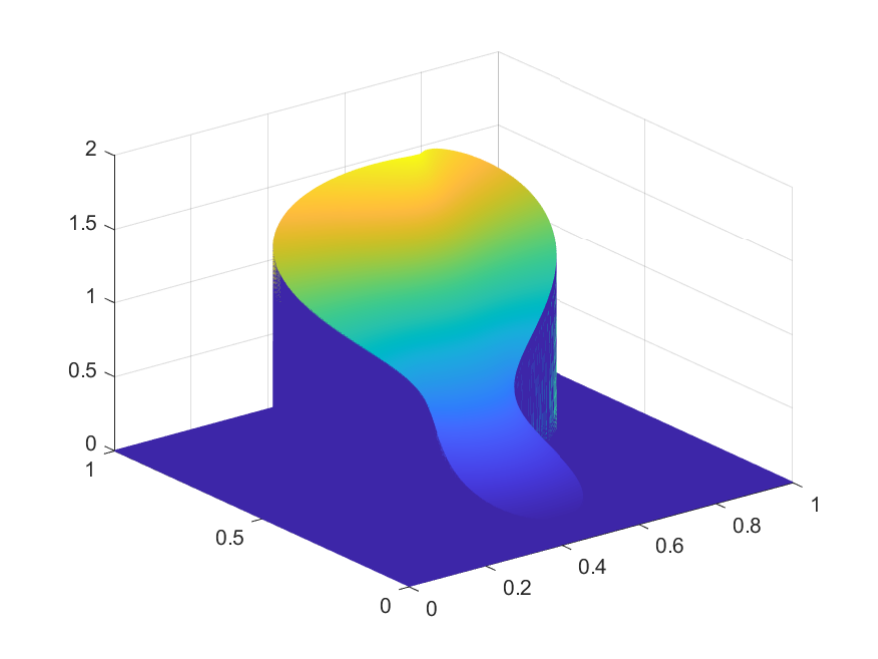}
			 \includegraphics[width=0.3\textwidth]{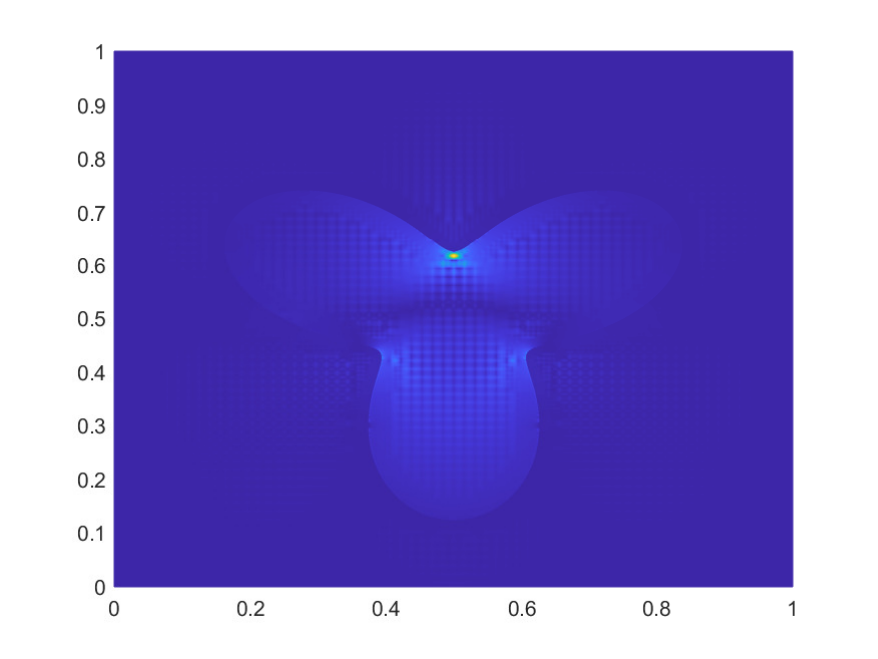}
			 \caption{\cref{ex:flower3}. Left: the plot of $\Gamma$. Middle: the plot of the reference solution $u^{\text{ref}}$, which is formed by $\mathcal{B}^{S,H^1_0(\Omega)}_{3,8}$.
				Right: the plot of the error $|u_6 - u^{\text{ref}}|$.}
			\label{ex:fig:flower3}
		\end{figure}
	\end{example}

\section{Proof of \cref{thm:converg} and justification of \cref{remark:quad}}
\label{sec:proof}
		
In this section, we shall first prove \cref{thm:converg} for the convergence rate of the Galerkin scheme using the biorthogonal wavelet on the unit interval $(0,1)$ described in \cref{sec:method}. Towards the end, we shall mathematically justify \cref{remark:quad}. 



To prove \cref{thm:converg}, we need three auxiliary results.
The first auxiliary result deals with the weighted Bessel property in  the fractional Sobolev space $H^\tau(\R^2)$ with $\tau\in \R$ for the wavelet system generated by the dual wavelet function $\tilde{\psi}$ and the dual refinable function $\tilde{\phi}$ satisfying certain properties. 
To prove the first auxiliary result,
we recall the bracket product for functions
$f,g : \R^2 \rightarrow \C$ as follows:
\[
[f,g](\xi):=\sum_{k \in \Z^2} f(\xi+2\pi k) \overline{g( \xi + 2\pi k)},
\quad \xi \in \R^2
\]
provided that the series converges absolutely for almost every $\xi \in \R^2$.

Using the ideas in \cite[Theorem~4.6.5]{hanbook} and \cite[Theorem~2.3]{HS09}, we can establish the following result.

%

\begin{theorem} \label{thm:Htau}
Let $\tilde{\eta}\in \{  \tilde{\phi}\otimes \tilde{\psi}, \tilde{\psi}\otimes \tilde{\phi}, \tilde{\psi}\otimes \tilde{\psi}\}$, where $\tilde{\phi},\tilde{\psi}$ are compactly supported functions in $H^{t}(\mathbb{R})$ for some $t>0$, satisfying \eqref{tphitpsi}, and $\vmo(\tilde{\psi}) \ge 2$.
Let $0<\tau_1<\tau_2<2$.
For any $\tau \in [\tau_1,\tau_2]$, there exists a positive constant $C$, which is independent of $\tau \in [\tau_1,\tau_2]$ but may depend on $\tau_1$ and $\tau_2$, such that
\be \label{Htau}
\sum_{j=0}^\infty \sum_{k\in \Z^2}
2^{2\tau j} |\la v, \tilde{\eta}_{j;k} \ra|^2\le C \|v\|_{H^{\tau}(\R^2)}^2,
\qquad \mbox{for all }\quad v \in H^{\tau}(\R^2),
\ee
where $\tilde{\eta}_{j;k}:=2^j \tilde{\eta}(2^j\cdot-k)$, which is the dilated and shifted version of the bivariate function $\tilde{\eta}$.
\end{theorem}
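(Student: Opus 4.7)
The plan is to follow the Fourier‑analytic strategy of \cite[Theorem~4.6.5]{hanbook} and \cite[Theorem~2.3]{HS09}, adapted to the bivariate tensor‑product setting. First, using Plancherel together with the dilation–translation identity
$\widehat{\tilde{\eta}_{j,k}}(\xi)=2^{-j}e^{-\ia 2^{-j}k\cdot\xi}\widehat{\tilde\eta}(2^{-j}\xi)$, one has
\[
\la v,\tilde\eta_{j,k}\ra=(2\pi)^{-2}2^{-j}\!\int_{\R^2}\hat v(\xi)\,\overline{\widehat{\tilde\eta}(2^{-j}\xi)}\,e^{\ia 2^{-j}k\cdot\xi}\,d\xi,
\]
which, after the change of variable $\xi=2^{j}\eta$, exhibits $2^{j}\la v,\tilde\eta_{j,k}\ra$ as the $k$‑th Fourier coefficient of the $2\pi\Z^{2}$‑periodization of $\hat v(2^{j}\eta)\overline{\widehat{\tilde\eta}(\eta)}$. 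Parseval on $L^{2}([-\pi,\pi]^{2})$ then yields
\[
\sum_{k\in\Z^{2}}|\la v,\tilde\eta_{j,k}\ra|^{2}
=(2\pi)^{-2}2^{2j}\!\int_{[-\pi,\pi]^{2}}\!\Big|\sum_{\ell\in\Z^{2}}\hat v(2^{j}(\eta+2\pi\ell))\overline{\widehat{\tilde\eta}(\eta+2\pi\ell)}\Big|^{2}d\eta.
\]

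Next I would apply the Cauchy–Schwarz inequality inside the squared absolute value, weighted by $(1+|2^{j}(\eta+2\pi\ell)|^{2})^{\pm\tau}$, to separate the $\hat v$‑mass from the $\widehat{\tilde\eta}$‑mass. Unfolding the periodization, this reduces the weighted Bessel bound to
\[
\sum_{j=0}^{\infty}2^{2\tau j}\sum_{k}|\la v,\tilde\eta_{j,k}\ra|^{2}
\le C\int_{\R^{2}}|\hat v(\xi)|^{2}(1+|\xi|^{2})^{\tau}\cdot\Big[\sum_{j\ge 0}W_{j,\tau}(\xi)\Big]\,d\xi,
\]
where $W_{j,\tau}(\xi)$ combines the factor $|\widehat{\tilde\eta}(2^{-j}\xi)|^{2}$ with a periodization/weight term. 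It therefore suffices to prove the key pointwise bound $\sup_{\xi\in\R^{2}}\sum_{j\ge 0}W_{j,\tau}(\xi)\le C$ uniformly in $\tau\in[\tau_{1},\tau_{2}]$, for then $\|v\|_{H^{\tau}(\R^{2})}^{2}$ appears by the Fourier characterization.

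To establish the pointwise bound I would split the sum over $j$ at the balancing scale $2^{j}\simeq 1+|\xi|$. For the "low‑frequency" regime $2^{j}\le(1+|\xi|)$, the tensor‑product structure of $\tilde\eta$ is used: at least one of its factors is $\tilde\psi$, which has two vanishing moments, so $|\widehat{\tilde\psi}(\omega_{i})|\le C|\omega_{i}|^{2}$ in the relevant coordinate. This produces a geometric series with ratio $2^{(\tau-2)j}$ per direction, which is summable precisely because $\tau\le\tau_{2}<2$. For the "high‑frequency" regime $2^{j}>(1+|\xi|)$, only the boundedness and integrability of $|\widehat{\tilde\eta}|^{2}$ (which follow from $\tilde\eta\in L^{2}(\R^{2})$ together with the cascade structure of the mask in \eqref{tatb}) is needed, and the geometric weight $(1+|\xi|^{2})^{-\tau}$ versus $2^{2\tau j}$ produces a convergent series as soon as $\tau\ge\tau_{1}>0$. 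Uniformity in $\tau\in[\tau_{1},\tau_{2}]$ follows because every geometric ratio arising from the split is bounded away from $1$ uniformly on this closed subinterval.

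I expect the main obstacle to be the high‑frequency regime: the low Sobolev regularity of $\tilde\phi$ and $\tilde\psi$ (only $H^{\tau}$ with $\tau<0.440765$) forbids a direct pointwise decay estimate on $\widehat{\tilde\eta}$ of arbitrary order, so one cannot simply invoke smoothness. Instead I would exploit that $\widehat{\tilde\eta}$ enjoys an $L^{2}$‑type decay inherited from the refinement equations \eqref{tphitpsi} and the cascade bound on the spectral radius of the transfer operator associated to $\tilde a$; this provides just enough summability for any $\tau>0$. Combining this high‑frequency $L^{2}$ decay with the vanishing‑moment cancellation at low frequency closes the argument and yields \eqref{Htau} with a constant depending only on $\tau_{1},\tau_{2}$ and the masks $\{\tilde a;\tilde b\}$.
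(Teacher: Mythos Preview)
Your overall Fourier-analytic strategy---Parseval on the periodization, then a pointwise multiplier bound split at the balancing scale $2^{j}\simeq 1+|\xi|$---is exactly the paper's route, and your low-frequency argument via the two vanishing moments of $\tilde\psi$ matches the paper's estimate of its term $B_{1}$ almost verbatim.

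The one place where the paper is simpler than your sketch is the handling of the periodization. Rather than a weighted Cauchy--Schwarz with weights $(1+|2^{j}(\eta+2\pi\ell)|^{2})^{\pm\tau}$, the paper just splits off the central term $\ell=0$ from the tail $\ell\ne 0$ via $(x+y)^{2}\le 2x^{2}+2y^{2}$, and bounds the tail by the \emph{unweighted} bracket product $[\widehat{\tilde\eta},\widehat{\tilde\eta}]$. Because $\tilde\eta$ is compactly supported and lies in $L^{2}(\R^{2})$, this bracket product is a trigonometric polynomial, hence trivially in $L^{\infty}(\T^{2})$. This makes the ``high-frequency'' term (the paper's $B_{2}$) a bare geometric series $\sum_{j\le j_{\xi}}2^{2\tau j}$ against $(1+|\xi|^{2})^{-\tau}$, summable for $\tau\ge\tau_{1}>0$.

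In particular, the obstacle you anticipate---needing an $L^{2}$-type decay of $\widehat{\tilde\eta}$ extracted from the transfer operator of $\tilde a$---never arises. The low Sobolev regularity of $\tilde\phi,\tilde\psi$ is irrelevant here; compact support alone gives $C_{\tilde\eta}:=\|[\widehat{\tilde\eta},\widehat{\tilde\eta}]\|_{L^{\infty}(\T^{2})}<\infty$, and that is all the high-frequency part needs. So your proposal is correct, but you can drop the cascade/spectral-radius machinery entirely.
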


\begin{proof}
Because $\tilde{\eta}\in L^2(\R^2)$ has compact support, we have $[\wh{\tilde{\eta}}, \wh{\tilde{\eta}}](\xi)=\sum_{k\in \Z^2}
\la \tilde{\eta},\tilde{\eta}(\cdot-k)\ra e^{-i k\cdot \xi}$ (e.g., see \cite[Lemma~4.4.1]{hanbook}), which is a bivariate $2\pi\Z^2$-periodic trigonometric polynomial. Hence, $[\wh{\tilde{\eta}}, \wh{\tilde{\eta}}]\in L^\infty(\T^2)$, which can be also deduced from \cite[Proposition~2.6]{HS09} or
\cite[Lemma 6.3.2]{hanbook}.

Let $k:=(k_1,k_2)\in \Z^2$. We observe that
\begin{align*}
	\int_{[-\pi,\pi]^2} [\wh{v}(2^j \cdot),\wh{\tilde{\eta}}](\xi) e^{i k \cdot \xi} d\xi
	& = \int_{-\pi}^{\pi} \int_{-\pi}^{\pi} [\wh{v}(2^j \cdot,2^j \cdot),\wh{\tilde{\eta}}](\xi_1,\xi_2)e^{i(k_1 \xi_1 + k_2 \xi_2)} d \xi_1 d\xi_2 \\
	& = \int_\R \int_\R \wh{v}(2^j \xi_1,2^j \xi_2) \overline{\wh{\tilde{\eta}}(\xi_1,\xi_2)} e^{i(k_1 \xi_1 + k_2 \xi_2)} d \xi_1 d\xi_2\\
	& = \frac{1}{2^{2j}} \la \wh{v}, \wh{\tilde{\eta}}(2^{-j} \cdot ,2^{-j} \cdot) e^{-i(k_1 \xi_1 + k_2 \xi_2)} \ra = \frac{(2\pi)^2}{2^{2j}} \la v, \tilde{\eta}_{j;k} \ra,
\end{align*}
which due to Parseval's identity yields
\[
\sum_{k \in \Z^2} |\la v, \tilde{\eta}_{j;k} \ra|^2
= \frac{2^{2j}}{(2 \pi)^2} \int_{[-\pi,\pi]^2} | [\wh{v}(2^j \cdot), \wh{\tilde{\eta}}](\xi) |^2 d \xi.
\]
Since $(x + y)^2 \le 2(x^2 + y^2)$ for all $x,y\in \R$, we have
\begin{align*}
|[\wh{v}(2^j \cdot),\wh{\tilde{\eta}}](\xi)|^2 & \le 2 |\wh{v}(2^{j} \xi) \overline{\wh{\tilde{\eta}}(\xi)}|^2 + 2 \left|\sum_{k \in \Z^2 \backslash \{(0,0)\}} \wh{v}(2^{j} (\xi + 2\pi k)) \overline{\wh{\tilde{\eta}}(\xi + 2\pi k)}\right|^2.
\end{align*}
Hence, it follows that
\begin{align*}
	& \sum_{k \in \Z^2} |\la v, \tilde{\eta}_{j;k} \ra|^2
	\le \frac{2^{2j-1}}{\pi^2} \int_{[-\pi,\pi]^2} |\wh{v}(2^{j} \xi) \overline{\wh{\tilde{\eta}}(\xi)}|^2 d \xi  + \frac{2^{2j-1}}{\pi^2} \int_{[-\pi,\pi]^2} \left|\sum_{k \in \Z^2 \backslash \{(0,0)\}} \wh{v}(2^{j} (\xi + 2\pi k)) \overline{\wh{\tilde{\eta}}(\xi + 2\pi k)}\right|^2 d \xi \\
	& \le \frac{1}{2 \pi^2} \int_{\R^2} |\wh{v}(\xi) \wh{\tilde{\eta}}(2^{-j} \xi)|^2 \chi_{[-\pi,\pi]^2}(2^{-j} \xi) d\xi \\
&\qquad\qquad + \frac{2^{2j-1}}{\pi^2} \int_{[-\pi,\pi]^2} \Big[\sum_{k \in \Z^2 \backslash \{(0,0)\}} |\wh{v}(2^{j} (\xi + 2\pi k))|^2 \Big]
\Big[\sum_{k \in \Z^2 \backslash \{(0,0)\}} |\wh{\tilde{\eta}}(\xi + 2\pi k) |^2 \Big] d\xi \\
	& \le \frac{1}{2 \pi^2} \int_{\R^2} |\wh{v}(\xi) \wh{\tilde{\eta}}(2^{-j} \xi)|^2 \chi_{[-\pi,\pi]^2}(2^{-j} \xi) d\xi + \frac{2^{2j} C_{\tilde{\eta}}}{2\pi^2} \int_{[-\pi,\pi]^2} \sum_{k \in \Z^2 \backslash \{(0,0)\}} |\wh{v}(2^{j} (\xi + 2\pi k))|^2 d\xi \\
	& \le \frac{1}{2 \pi^2} \int_{\R^2} |\wh{v}(\xi) \wh{\tilde{\eta}}(2^{-j} \xi)|^2 \chi_{[-\pi,\pi]^2}(2^{-j} \xi) d\xi + \frac{C_{\tilde{\eta}}}{2 \pi^2} \int_{\R^2} |\wh{v}(\xi)|^2 \chi_{\R^{2} \backslash [-\pi,\pi]^2}(2^{-j} \xi) d\xi,
\end{align*}
where we already proved that $C_{\tilde{\eta}} := \| [\wh{\tilde{\eta}},\wh{\tilde{\eta}}] \|_{L^{\infty}(\T^2)} <\infty$.
As a result, we have
\be \label{subHtau}
\sum_{j=0}^{\infty} \sum_{k \in \Z^2} 2^{2 \tau j} |\la v, \tilde{\eta}_{j;k} \ra|^2 \le \frac{1}{4 \pi^2} \int_{\R^2} |\wh{v}(\xi)|^2 (1 + \|\xi\|^2 )^{\tau} (2B_1(\xi)+2C_{\tilde{\eta}}B_2(\xi)) d\xi,
\ee
where
\begin{align*}
	B_1(\xi) & := (1 + \|\xi\|^2)^{-\tau} \sum_{j=0}^{\infty} 2^{2\tau j} |\wh{\tilde{\eta}}(2^{-j} \xi)|^2 \chi_{[-\pi,\pi]^2}(2^{-j} \xi),\\
	B_2(\xi) & := (1 + \|\xi\|^2)^{-\tau} \sum_{j=0}^{\infty} 2^{2\tau j} \chi_{\R^{2} \backslash [-\pi,\pi]^2}(2^{-j} \xi).
\end{align*}

We first estimate $B_1(\xi)$. Recall that $\tilde{\eta}$ takes the form of $\tilde{\phi} \otimes \tilde{\psi}$, $\tilde{\psi} \otimes \tilde{\phi}$, or $\tilde{\psi} \otimes \tilde{\psi}$. By assumption, $\vmo(\tilde{\psi})\ge 2$; i.e., there is a positive constant $C_{\vmo}>0$ such that $|\widehat{\tilde{\psi}}(\xi)|\le C_{\vmo}|\xi|^2$ and $|\widehat{\tilde{\psi}}(\xi)|\le C_{\vmo}|\xi|$ for almost every $\xi \in [-\pi,\pi]$. Thus, one of the following inequalities holds: $|\wh{\tilde{\phi}}(\xi_1) \wh{\tilde{\psi}}(\xi_2)|^2 \le C_{\vmo}^2 |\xi_2|^4$, $|\wh{\tilde{\psi}}(\xi_1) \wh{\tilde{\phi}}(\xi_2)|^2 \le C_{\vmo}^2 |\xi_1|^4$, or $|\wh{\tilde{\psi}}(\xi_1) \wh{\tilde{\psi}}(\xi_2)|^2 \le C_{\vmo}^2 |\xi_1 \xi_2|^2$ for almost every $\xi_1, \xi_2 \in [-\pi,\pi]$. That is, $|\wh{\tilde{\eta}}(\xi)|^2 = |\wh{\tilde{\eta}}(\xi_1,\xi_2)|^2 \le C_{\vmo}^2 (|\xi_1|^2 + |\xi_2|^2)^2 = C_{\vmo}^2 \|\xi\|^4$ for almost every $\xi \in [-\pi,\pi]^2$. Define
\[
J_\xi:=\max\{0,\lceil \log_2(\| \xi \|/\pi) \rceil \}.
\]
Then, for all $\tau\in [\tau_1, \tau_2]$, because $0<\tau_1<\tau_2<2$ and $\tau-2<0$, we have
\begin{align*}
	B_1(\xi) & \le C_{\vmo}^2 (1 + \|\xi\|^2)^{-\tau} \|\xi\|^4  \sum_{j=J_\xi}^{\infty} 2^{2 j (\tau-2)}
	\le C_{\vmo}^2 (1 + \|\xi\|^2)^{-\tau} \|\xi\|^4 \frac{2^{2J_\xi(\tau-2)}}{1-2^{2(\tau-2)}}\\
	& \le C_{\vmo}^2 (1 + \|\xi\|^2)^{-\tau}  \frac{\|\xi\|^4}{\|\xi\|^{2(2-\tau)}} \frac{\pi^{2(2-\tau)}}{1-2^{2(\tau-2)}}
=C_{\vmo}^2
\Big( \frac{\|\xi\|^2}{1+\|\xi\|^2}\Big)^\tau \frac{\pi^{2(2-\tau)}}{1-2^{2(\tau-2)}}
\le \frac{\pi^{4}}{1-2^{2(\tau_2 - 2)}} C_{\vmo}^2,
\end{align*}
which implies that $B_1(\xi) \in L^{\infty}(\R^2)$. 
Define $j_\xi:=\max\{0,\lfloor \log_2(\|\xi\|/\pi) \rfloor \}$.
%
%
Meanwhile, for $\tau \in [\tau_1, \tau_2]$, by $\tau>\tau_1>0$, we have
\begin{align*}
	B_2(\xi) & \le (1 + \|\xi\|^2)^{-\tau} \sum_{j=0}^{j_\xi} 2^{2 \tau  j} \le (1 + \|\xi\|^2)^{-\tau} \frac{2^{2\tau (j_\xi+1)}}{2^{2\tau}-1} \le \left(\frac{2^{2\tau} \pi^{-2\tau} }{2^{2\tau}-1}\right)
\Big(\frac{\|\xi\|^2}{1+\|\xi\|^2}\Big)^\tau
\le \frac{2^{2\tau_1}}{2^{2\tau_1}-1},
\end{align*}
which implies that $B_2(\xi) \in L^{\infty}(\R^2)$. Continuing from \eqref{subHtau}, we have
\[
\sum_{j=0}^{\infty} \sum_{k \in \Z^2} 2^{2 \tau j} |\la v, \tilde{\eta}_{j;k} \ra|^2 \le C \frac{1}{4 \pi^2} \int_{\R^2} |\wh{v}(\xi)|^2  (1+\|\xi\|^2)^{\tau} d\xi,
\]
where $C:=2\|B_1\|_{L^{\infty}(\R^2)} + 2 C_{\tilde{\eta}} \|B_2\|_{L^{\infty}(\R^2)}$ with
\[
\|B_1\|_{L^{\infty}(\R^2)} \le \frac{\pi^{4}}{1-2^{2(\tau_2 - 2)}} C_{\vmo}^2
\quad
\text{and}
\quad
\|B_2\|_{L^{\infty}(\R^2)} \le \frac{2^{2\tau_1}}{2^{2\tau_1}-1}
\]
for all $\tau \in [\tau_1, \tau_2]$ with $0<\tau_1 <\tau_2<2$.
%
%
We obtain the desired conclusion.
\end{proof}

In preparation for the next auxiliary result, we introduce a few notations and present a few observations. From the discussion in \cref{sec:prelim} and more specifically by \eqref{PhiPsi2D}, recall that $\tilde{\Psi}^{2D}_j=\{\tilde{\Phi}_j\otimes \tilde{\Psi}_j, \tilde{\Psi}_j\otimes \tilde{\Phi}_j, \tilde{\Psi}_j\otimes \tilde{\Psi}_j\}$.
We can split the set $\tilde{\Psi}^{2D}_j$ into two groups:
\be \label{GxGy}
G_j^x:=[\tilde{\Psi}_j\otimes \tilde{\Phi}_j] \cup [\tilde{\Psi}_j\otimes \tilde{\Psi}_j]
\quad \text{and} \quad
G_j^y:=\tilde{\Phi}_j\otimes \tilde{\Psi}_j.
\ee
Note that any $\tilde{\alpha}_j\in \tilde{\Psi}^{2D}_j$ must belong to either $G_j^x$ or $G_j^y$.
%
%
%
%
We define an integration operation along one axis as follows:
\[
\mathring{\tilde{\alpha}}_j :=
\begin{cases}
	2^j \int_x^1 \tilde{\alpha}_j(t) dt, & \tilde{\alpha}_j \in \tilde{\Psi}_j,\\
	2^j \int_x^{1} \tilde{\alpha}_j(t,y) dt, & \tilde{\alpha}_j\in G_j^x,\\
	2^j \int_y^{1} \tilde{\alpha}_j(x,t) dt, & \tilde{\alpha}_j\in G_j^y.
\end{cases}
\]
%
%
Moreover, if $\tilde{\alpha}_j \in G_j^x$, then
\be \label{breve:Psi}
\mathring{\tilde{\alpha}}_j \in
\{\breve{\tilde{\Psi}}_j \otimes \Phi_j,
\breve{\tilde{\Psi}}_j\otimes \Psi_j\},\qquad \mbox{for all}\quad j\ge J_0,
\ee
where $\breve{\tilde{\Psi}}_{j} = \{\mathring{\tilde{\psi}}^{L}_{j;0}\} \cup \{\mathring{\tilde{\psi}}_{j;k} \setsp n_{l,\psi} \le k \le 2^{j}-n_{h,\psi}\}\cup \{ \mathring{\tilde{\psi}}^{R}_{j;2^{j}-1}\}$, and $n_{l,\psi},n_{h,\psi}$ are the same integers in \eqref{PhiPsij}.
If $\tilde{\alpha}_j \in G^{y}_j$, then $\mathring{\tilde{\alpha}}_j \in \Phi_j \otimes \breve{\tilde{\Psi}}_j$ for all $j \ge J_0$, where $\breve{\tilde{\Psi}}_j$ is defined the same way as before.
%



To prove \cref{thm:converg}, we shall also need the following second auxiliary result.

\begin{theorem} \label{thm:H2}
Let $\tilde{\Psi}_j^{2D}$ with $j\ge J_0$ and $J_0 \in \N$ be defined in \eqref{PhiPsi2D} (constructed by the direct approach in \cite{HM21a}). Assume that $\tilde{\phi},\tilde{\psi} \in H^t(\mathbb{R})$ for some $t>0$ and $\vmo(\tilde{\psi})
\ge \mm$.
Then there exists a positive constant $C$ such that
\be \label{v:J}
\sum_{j=J}^\infty \sum_{\tilde{\alpha}_j \in \tilde{\Psi}^{2D}_j} 2^{2j} |\la v, \tilde{\alpha}_j\ra|^2
	\le 2^{-2(m-1)J} C |v|^{2}_{H^{\mm}(\Omega)},
\qquad \mbox{for all } J\ge J_0, v \in H^{\mm}(\Omega)\cap H^1_0(\Omega),
\ee
where $|v|_{H^\mm(\Omega)}$ is the semi-norm of $v$ in $H^\mm(\Omega)$, i.e., $|v|_{H^\mm(\Omega)}^2:=\sum_{|\mu|=\mm} \|\partial^\mu v\|_{L^2(\Omega)}^2$, where $\mu:=(\mu_1,\mu_2)$ and $\partial^{\mu} := \tfrac{\partial^{\mu_1 + \mu_2}}{\partial x^{\mu_1} \partial y^{\mu_2}}$ with $\mu_1, \mu_2\in \N\cup\{0\}$.
\end{theorem}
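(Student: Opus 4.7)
The strategy is to trade two orders of differentiation from $\tilde{\alpha}_j$ onto $v$ by integrating by parts twice, each step paying a factor $2^{-j}$, and then to invoke a single-scale $L^2$-Bessel inequality for the resulting doubly-antiderivative system. Concretely, I aim to establish the identity
\[
\langle v,\tilde\alpha_j\rangle_\Omega = 2^{-2j}\,\langle \partial^\mu v,\mathring{\mathring{\tilde\alpha}}_j\rangle_\Omega,
\]
with $\mu=(2,0)$ for $\tilde\alpha_j\in G_j^x$ and $\mu=(0,2)$ for $\tilde\alpha_j\in G_j^y$, where $\mathring{\mathring{\tilde\alpha}}_j$ denotes the double application of the operator $\mathring{\cdot}$ defined above, together with the single-scale bound
\[
\sum_{\tilde\alpha_j\in\tilde\Psi^{2D}_j}\bigl|\langle \partial^\mu v,\mathring{\mathring{\tilde\alpha}}_j\rangle_\Omega\bigr|^2 \le C\,|v|^2_{H^2(\Omega)}
\]
with $C$ independent of $j$. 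Multiplying these and summing the geometric series $\sum_{j\ge J}2^{-2j}=\tfrac{4}{3}2^{-2J}$ then yields \eqref{v:J}.

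By the $x\leftrightarrow y$ symmetry it suffices to treat $\tilde\alpha_j\in G_j^x$. Since $\partial_x\mathring{\tilde\alpha}_j=-2^j\tilde\alpha_j$, a first integration by parts in $x$ gives
\[
\int_0^1 v\,\tilde\alpha_j\,dx = -2^{-j}\bigl[v\,\mathring{\tilde\alpha}_j\bigr]_{x=0}^{x=1} + 2^{-j}\int_0^1 \partial_x v\,\mathring{\tilde\alpha}_j\,dx,
\]
and the bracketed term vanishes because both $v$ (from $v\in H^1_0(\Omega)$) and $\mathring{\tilde\alpha}_j$ (from the statement $\mathring{\tilde\alpha}_j\in H^1_0(\Omega)$ recorded immediately before the theorem) have zero trace on $\partial\Omega$. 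A second integration by parts against $\partial_x\mathring{\mathring{\tilde\alpha}}_j=-2^j\mathring{\tilde\alpha}_j$ produces an analogous boundary contribution that vanishes at $x=1$ by the definition of $\mathring{\cdot}$ and at $x=0$ because $\mathring{\mathring{\tilde\alpha}}_j(0,\cdot)=2^j\int_0^1\mathring{\tilde\alpha}_j(t,\cdot)\,dt=0$ -- precisely the first-order vanishing moment of $\mathring{\tilde\alpha}_j$ already noted in the same discussion. Multiplying by the unchanged $y$-factor and integrating in $y$ delivers the identity.

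For the single-scale Bessel estimate, every $\mathring{\mathring{\tilde\alpha}}_j$ is a dilate/translate $2^j F(2^j\cdot -k)$ of one of finitely many compactly supported templates $F\in L^2(\R^2)$: three tensor patterns paired with a uniformly bounded list of left-boundary, interior, and right-boundary one-dimensional generators in each coordinate, as guaranteed by \eqref{breve:Psi} and its $G_j^y$ analogue. For each such $F$, extending $w\in L^2(\Omega)$ by zero to $\R^2$ and invoking the bracket-product argument in the proof of \cref{thm:Htau} at $\tau=0$ -- which requires only $[\widehat F,\widehat F]\in L^\infty(\T^2)$, automatic from the compact support of $F$ -- yields $\sum_{k\in\Z^2}|\langle w,F_{j,k}\rangle|^2\le C_F\|w\|^2_{L^2(\R^2)}$ uniformly in $j$. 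Summing over the finite template list and specializing to $w=(\partial^\mu v)\chi_\Omega$ closes the single-scale bound. The main obstacle is the boundary-wavelet bookkeeping: verifying that $\mathring{\mathring{\tilde\psi}}^L$ and $\mathring{\mathring{\tilde\psi}}^R$ are compactly supported $L^2$ functions so that their bracket products are bounded, and that both boundary evaluations in the integration by parts continue to vanish when $\tilde\alpha_j$ is a boundary wavelet. Both points are implied by the explicit formulas \eqref{bndrywav} combined with the vanishing-moment accounting already built into the construction, but require a short case analysis over the handful of boundary generators.
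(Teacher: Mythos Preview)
Your proposal is correct and takes a genuinely different route from the paper's proof.

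\textbf{What the paper does.} The paper introduces the bilinear nodal interpolant $I_h v\in V_J$, uses the biorthogonality $\langle I_h v,\tilde\alpha_j\rangle=0$ for $j\ge J$ to replace $v$ by $v-I_h v$, performs a \emph{single} integration by parts in the direction carrying the $\tilde\psi$-factor to obtain $2^j\langle v-I_h v,\tilde\alpha_j\rangle=\langle\partial(v-I_h v),\mathring{\tilde\alpha}_j\rangle$, and then applies a \emph{multi-scale} $L^2$-Bessel inequality (summed over all $j\ge J$, relying on the one vanishing moment of $\mathring{\tilde\alpha}_j$) to the full sum, obtaining control by $|v-I_h v|_{H^1(\Omega)}^2$. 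The factor $2^{-2J}$ then comes from the standard FEM interpolation estimate $|v-I_h v|_{H^1(\Omega)}\le C_0\,2^{-J}|v|_{H^2(\Omega)}$.

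\textbf{What you do.} You integrate by parts \emph{twice}, using both vanishing moments of the $\tilde\psi$-factor to kill the boundary terms, arriving at $\langle v,\tilde\alpha_j\rangle=2^{-2j}\langle\partial^\mu v,\mathring{\mathring{\tilde\alpha}}_j\rangle$. You then invoke only a \emph{single-scale} Bessel bound for the double-antiderivative system (which needs no vanishing moments, only $[\widehat F,\widehat F]\in L^\infty$), and the $2^{-2J}$ factor emerges from the explicit geometric sum $\sum_{j\ge J}2^{-2j}$.

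\textbf{Trade-offs.} Your argument is more self-contained: it avoids the interpolation operator, the FEM approximation theorem, and the multi-scale Bessel lemma from \cite{HM23,han03}. The price is the extra bookkeeping you flag---checking that the boundary templates $\mathring{\mathring{\tilde\psi}}^{L},\mathring{\mathring{\tilde\psi}}^{R}$ remain compactly supported $L^2$ functions with bounded bracket products, and that both boundary evaluations vanish for the boundary generators. These checks are routine given the construction (the paper records that every element of $\tilde\Psi_j$ has two vanishing moments), so the gap you identify is real but benign. The paper's route, by contrast, off-loads one order of differentiation onto $I_h$ and hence needs only the first antiderivative system $\breve{\tilde\Psi}_j$, for which the relevant Bessel property is already cited.
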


\begin{proof}
Note that for all $j \ge J_0$, $\tilde{\Psi}_j^{2D}\subset\tilde{\mathcal{B}}_{J_0}^{2D}$, and $\tilde{\mathcal{B}}_{J_0}^{2D}$ is the dual part of the biorthogonal wavelet $(\tilde{\mathcal{B}}_{J_0}^{2D}, \mathcal{B}_{J_0}^{2D})$ in $L^2(\Omega)$. Define $V_J=\text{span}(\Phi^{2D}_J)$ and $h:=2^{-J}$. $V_J$ is just the finite element space constructed via tensor product. 

For $v\in H^\mm(\Omega)\cap H^1_0(\Omega)$, we define $I_h v$ to be the interpolation function of $v$ on the grid of $\Omega$ using the mesh size $h$, i.e., on the grid $\Omega \cap 2^{-J}\Z^2$ such that $[I_h v](p)=v(p)$ for all $p\in \Omega\cap 2^{-J}\Z^2$. By \cite[Theorem~4.6.14]{BS08}, there is a positive constant $C_0$, depending only on the primal refinable function $\phi$,
such that
\be \label{fem}
|v-I_h v|_{H^1(\Omega)}:=\| \nabla (v-I_h v)\|_{L^2(\Omega)}\le C_0 h^{\mm-1} |v|_{H^\mm(\Omega)}=C_0 2^{-(\mm-1)J} |v|_{H^\mm(\Omega)}.
\ee
Note that $I_h v\in H^1_0(\Omega)$.
Because  $I_h v\in V_J=\text{span}(\mathcal{B}^{2D}_{J_0,J})$, $\mathcal{B}^{2D}_{J_0,J}\subset \mathcal{B}^{2D}_{J_0}$, and $(\tilde{\mathcal{B}}_{J_0}^{2D}, \mathcal{B}_{J_0}^{2D})$ is a biorthogonal wavelet in $L^2(\Omega)$, we have the following crucial perpendicular condition:
\be \label{perp}
\la I_h v, \tilde{\alpha}_j\ra=0 \qquad \mbox{for all } \tilde{\alpha}_j\in \tilde{\Psi}^{2D}_j \mbox{ with } j\ge J.
\ee

We now estimate \eqref{v:J}.
We first handle the case, where $\tilde{\alpha}_j \in G_j^x$. 
Integrating by parts with respect to $x$ and using the fact that $v-I_h v\in H^1_0(\Omega)$ satisfies the homogeneous Dirichlet condition, we obtain
\[
2^j \la v - I_h v, \tilde{\alpha}_j\ra = \la \tfrac{\partial}{\partial x}(v - I_h v), \mathring{\tilde{\alpha}}_j\ra,\qquad \tilde{\alpha}_j \in G_j^x.
\]
Therefore, we conclude that
\begin{align*}
\sum_{j=J}^\infty \sum_{\tilde{\alpha}_j \in G_j^x} 2^{2j} |\la v, \tilde{\alpha}_j\ra|^2
&=\sum_{j=J}^\infty \sum_{\tilde{\alpha}_j \in G_j^x} 2^{2j} |\la v-I_h v, \tilde{\alpha}_j\ra|^2
= \sum_{j=J}^\infty \sum_{\tilde{\alpha}_j \in G_j^x} |\la \tfrac{\partial}{\partial x}(v - I_h v), \mathring{\tilde{\alpha}}_j\ra|^2\\
&\le \sum_{j=J}^\infty \sum_{\mathring{\tilde{\alpha}}_j \in [\breve{\tilde{\Psi}}_j\otimes \tilde{\Phi}_j]\cup[
\breve{\tilde{\Psi}}_j\otimes \tilde{\Psi}_j]}
|\la \tfrac{\partial}{\partial x}(v - I_h v), \mathring{\tilde{\alpha}}_j\ra|^2,
\end{align*}
where we used \eqref{breve:Psi} and the fact that $G_j^x\subseteq [\breve{\tilde{\Psi}}_j\otimes \tilde{\Phi}_j]\cup[\breve{\tilde{\Psi}}_j\otimes \tilde{\Psi}_j]$ to arrive at the last line.
Because $v - I_h v\in H_0^1(\Omega)$, we have $\tfrac{\partial}{\partial x}(v - I_h v)\in \LpO{2}$. Note that all the elements in $[\breve{\tilde{\Psi}}_j\otimes \tilde{\Phi}_j]\cup[
\breve{\tilde{\Psi}}_j\otimes \tilde{\Psi}_j]$ are supported in $(0,1)^2$ and belong to $H^{t}(\mathbb{R}^2)$ for some $t>0$. Since $\vmo(\tilde{\psi})\ge m$, all elements taking the form of $\mathring{\tilde{\psi}}_{j;k} \otimes \eta_j$, where $\eta_j \in \tilde{\Phi}_j \cup \tilde{\Psi}_j$, have compact support and at least one vanishing moment.
Hence, by the Bessel property in \cite[Lemma~7.1]{HM23} and \cite[Theorems~2.2 and~2.3]{han03}, there must exist a positive constant $C_1$, independent of $J$ and only depending on the wavelet, such that
\be \label{est1}
\sum_{j=J}^\infty \sum_{\tilde{\alpha}_j \in G_j^x} 2^{2j} |\la v, \tilde{\alpha}_j\ra|^2
\le \sum_{j=J}^\infty \sum_{\mathring{\tilde{\alpha}}_j \in [\breve{\tilde{\Psi}}_j\otimes \tilde{\Phi}_j]\cup[
\breve{\tilde{\Psi}}_j\otimes \tilde{\Psi}_j]}
|\la \tfrac{\partial}{\partial x}(v - I_h v), \mathring{\tilde{\alpha}}_j\ra|^2
\le C_1 \|\tfrac{\partial}{\partial x}(v - I_h v)\|^2_{\LpO{2}}.
\ee

We now handle the case, where $\tilde{\alpha}_j\in G_j^y$. By using a similar argument as before, we obtain
\[
2^j \la v - I_h v, \tilde{\alpha}_j\ra = \la \tfrac{\partial}{\partial y}(v - I_h v), \mathring{\tilde{\alpha}}_j\ra,\qquad \tilde{\alpha}_j \in G_j^y.
\]
Furthermore, there exists a positive constant $C_2$, independent of $J$, such that
\[
\sum_{j=J}^\infty \sum_{\tilde{\alpha}_j \in G_j^y} 2^{2j} |\la v, \tilde{\alpha}_j\ra|^2
\le \sum_{j=J}^\infty \sum_{\mathring{\tilde{\alpha}}_j \in \tilde{\Phi}_j\otimes \breve{\tilde{\Psi}}_j}
|\la \tfrac{\partial}{\partial y}(v - I_h v), \mathring{\tilde{\alpha}}_j\ra|^2
\le C_2 \|\tfrac{\partial}{\partial y}(v - I_h v)\|^2_{\LpO{2}}.
\]
Combining the above estimates with \eqref{est1}, we conclude that
\begin{align*}
\sum_{j=J}^\infty \sum_{\tilde{\alpha}_j \in \tilde{\Psi}_j^{2D}} 2^{2j} |\la v, \tilde{\alpha}_j\ra|^2
&=\sum_{j=J}^\infty \sum_{\tilde{\alpha}_j \in G_j^x \cup G_j^y} 2^{2j} |\la v, \tilde{\alpha}_j\ra|^2\\
&\le C_1 \|\tfrac{\partial}{\partial x}(v - I_h v)\|^2_{\LpO{2}}+C_2 \|\tfrac{\partial}{\partial y}(v - I_h v)\|^2_{\LpO{2}}\\
&\le \max(C_1,C_2) \| \nabla (v- I_h v)\|^2_{\LpO{2}}=
\max(C_1,C_2) |v- I_h v|^2_{H^1(\Omega)},
\end{align*}
which combined with the approximation result in \eqref{fem} further yields
\[
\sum_{j=J}^\infty \sum_{\tilde{\alpha}_j \in \tilde{\Psi}_j^{2D}} 2^{2j} |\la v, \tilde{\alpha}_j\ra|^2
\le \max(C_1,C_2) |v- I_h v|^2_{H^1(\Omega)}
\le \max(C_1,C_2) C_0^2 h^{2(\mm-1)} |v|^2_{H^{\mm}(\Omega)}.
\]
This proves \eqref{v:J} with $C:= \max(C_1,C_2) C_0^2<\infty$, where $h=2^{-J}$.
\end{proof}

We also need the following lemma in the proof of \cref{thm:converg}.

\begin{lemma}\label{lem:chi}
	Let $\Omega_-$ be a bounded open domain with a smooth boundary $\Gamma$. Then
	\be \label{est:chi}
	 \|\chi_{\Omega_-}\|^2_{H^{1/2-\gep}(\R^2)}
	\le C_\Gamma^2 \gep^{-1},\qquad\forall\;  0<\gep<1/4,
	\ee
	for a positive constant $C_\Gamma$ that only depends on $\Omega_-$ but is independent of $0<\gep<1/4$.
\end{lemma}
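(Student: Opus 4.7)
\textbf{Proof proposal for \cref{lem:chi}.} My plan is to use the Gagliardo--Slobodeckij characterisation of $H^s(\R^2)$ for $s\in(0,1)$, namely
\[
\|f\|_{H^s(\R^2)}^2 \;\asymp\; \|f\|_{L^2(\R^2)}^2 + \int_{\R^2}\int_{\R^2}\frac{|f(x)-f(y)|^2}{|x-y|^{2+2s}}\,dx\,dy,
\]
and to track explicitly how the constants depend on $s=1/2-\gep$ as $\gep\downarrow 0$. The $L^2$ term contributes $|\Omega_-|$, which is bounded independently of $\gep$, so the entire task is to estimate the double integral with the sharp dependence on $\gep$.

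For $f=\chi_{\Omega_-}$ the integrand $|f(x)-f(y)|^2$ equals $1$ exactly when one of $x,y$ lies in $\Omega_-$ and the other in $\Omega_-^c$, and vanishes otherwise, so the double integral equals $2\int_{\Omega_-}\int_{\Omega_-^c}|x-y|^{-(2+2s)}\,dy\,dx$. Let $d(x):=\mathrm{dist}(x,\Gamma)$ for $x\in\Omega_-$. Since $B(x,d(x))\subset\Omega_-$, the inner integral is bounded by integrating over the exterior of that ball, and polar coordinates give
\[
\int_{\Omega_-^c}\frac{dy}{|x-y|^{2+2s}} \;\le\; \int_{|z|\ge d(x)}\frac{dz}{|z|^{2+2s}} \;=\; \frac{\pi}{s}\,d(x)^{-2s}.
\]
With $s=1/2-\gep>1/4$ the prefactor $\pi/s$ is bounded by $4\pi$, so the remaining task reduces to bounding $\int_{\Omega_-} d(x)^{-(1-2\gep)}\,dx$.

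For this I would use the smoothness of $\Gamma$ (class $\mathscr{C}^2$ is sufficient, and in fact $\mathscr{C}^{1,1}$ would do) to obtain a tubular neighbourhood: there exist $\delta_0>0$ and a constant $c_\Gamma$ depending only on $\Gamma$ and $|\Omega_-|$ such that $|\{x\in\Omega_-:d(x)<t\}|\le c_\Gamma\,t$ for all $t\in(0,\delta_0]$. Applying the layer-cake formula with $\beta:=1-2\gep\in(1/2,1)$ yields
\[
\int_{\Omega_-} d(x)^{-\beta}\,dx \;=\; \beta\int_0^\infty |\{x\in\Omega_-:d(x)<u\}|\,u^{-\beta-1}\,du \;\le\; \frac{\beta c_\Gamma\,\delta_0^{1-\beta}}{1-\beta} + |\Omega_-|\,\delta_0^{-\beta}.
\]
The first term equals $\tfrac{(1-2\gep)c_\Gamma\,\delta_0^{2\gep}}{2\gep}$, which is $\le C/\gep$ uniformly for $\gep\in(0,1/4)$, while the second term is bounded uniformly in $\gep$. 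Combining these estimates with the earlier reduction produces the seminorm bound $C_\Gamma^2/\gep$, and adding the bounded $L^2$ contribution gives \eqref{est:chi}.

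The only mildly delicate point is justifying the tubular-neighbourhood volume estimate uniformly in $t\in(0,\delta_0]$; this is a standard consequence of the $\mathscr{C}^2$ regularity of $\Gamma$ via the coarea formula or an explicit parametrisation $(s,t)\mapsto \gamma(s)-t\,\vec n(\gamma(s))$, whose Jacobian is $1+O(t)$ up to the reach of $\Gamma$. All other steps are elementary. The explicit $1/(1-\beta)=1/(2\gep)$ factor from the layer-cake integral is precisely the source of the stated $\gep^{-1}$ blow-up, which is sharp since $\chi_{\Omega_-}\notin H^{1/2}(\R^2)$.
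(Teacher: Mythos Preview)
Your proof is correct and the $\gep^{-1}$ dependence emerges exactly where you say, from the $1/(1-\beta)=1/(2\gep)$ factor in the layer-cake integral. One point worth making explicit is that the equivalence constants between the Fourier $H^s$-norm and the Gagliardo seminorm depend on $s$; however, since $s=1/2-\gep$ stays in the compact interval $[1/4,1/2)$, these constants are uniformly bounded and do not spoil the estimate.

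Your route is genuinely different from the paper's. The paper uses the Besov/modulus-of-smoothness characterisation $|F|_{H^\tau}^2=\int_0^\infty s^{-2\tau-1}\omega(F,s)_2^2\,ds$ and observes directly that $\|\chi_{\Omega_-}(\cdot-t)-\chi_{\Omega_-}\|_{L^2}^2$ equals the measure of the symmetric-difference-type set, which is $\bo(|t|)$ by smoothness of $\Gamma$; this gives $\omega(\chi_{\Omega_-},s)_2\le C s^{1/2}$, and a single integral split at $s=1$ then yields the $\gep^{-1}$ bound. Both arguments ultimately rest on the same geometric fact (a tubular neighbourhood of $\Gamma$ of width $t$ has area $\bo(t)$), but the paper packages it as a translation estimate feeding into a one-dimensional integral, while you package it as a distance-function integral via Gagliardo. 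The paper's version is marginally shorter and avoids the layer-cake step; yours has the advantage of isolating the classical quantity $\int_{\Omega_-} d(x)^{-\beta}\,dx$, which may be more recognisable to readers familiar with fractional Sobolev spaces through the double-integral definition.
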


\begin{proof}
	For a compactly supported function $F\in L^2(\R^2)$, recall that its modulus of smoothness in the $L^2$-norm is defined by
	\be \label{modsmooth}
	\omega(F,s)_2:=\sup_{|t|\le s} \| F(\cdot+t)-F\|_{L^2(\R^2)},\qquad s>0.
	\ee
	For any $0<\tau<1$, the semi-norm $F \in H^\tau(\R^2)$ is
	\be \label{sobolevsemi}
	|F|_{H^\tau(\R^2)}:=
	\left(\int_0^\infty [s^{-\tau}\omega(F,s)_2]^2 \frac{ds}{s}\right)^{1/2}.
	\ee
	For $t\in \R^2$, we define
	\[
	E_t:=\{q \in \R^2 \quad : \quad
	q\in [(\R^2\bs \Omega_-)+t] \cap \Omega_-\quad \mbox{or}\quad q\in [\Omega_-+t]\cap [\R^2\bs \Omega_-]
	\},
	\]
	which has a measure of order $\mathcal{O}(\|t\|)$, because $\Gamma=\ol{\Omega_-}\cap \ol{\R^2\bs \Omega_-}$ is a closed smooth curve.
	Define $F:=\chi_{\Omega_-}$. Then
	for all $t\in \R^2$, we have
	\[
	\|F(p-t)-F(p)\|^2_{L^2(\R^2)}
	=\int_{\R^2} |F(p-t)-F(p)|^2 dp
	=\int_{E_t}  dp\le C^2_1 \|t\|,
	\]
	for a positive constant $C_1$ only depending on $\partial \Omega_-$. Consequently, we have
	\be \label{moduli}
	\omega(F, s)_2
	\le C_1 s^{1/2},\qquad \forall s\in (0,\infty).
	\ee
	Note $\omega(F,s)_2 \le 2 \|F\|_{L^2(\R^2)}=2|\Omega_-|^{1/2}$ by the triangle inequality. Hence, for $0<\gep<1/4$, we have
	\begin{align*}
		|F|^2_{H^{1/2-\gep}(\R^2)}
		& =\int_0^\infty [s^{-(1/2-\gep)}\omega(F,s)_2]^2 \frac{ds}{s}
		=\int_0^{1} [s^{-(1/2-\gep)}\omega(F,s)_2]^2 \frac{ds}{s}
+\int_{1}^\infty [s^{-(1/2-\gep)}\omega(F,s)_2]^2 \frac{ds}{s} \\
		& \le  C_1^2 \int_0^{1} s^{2\gep-1} s \frac{ds}{s}+
4|\Omega_-| \int_1^\infty s^{2\gep-1} \frac{ds}{s}
		=\frac{C_1^2}{2\gep}+
\frac{4}{1-2\gep} |\Omega_-|\le \frac{C_1^2}{2\gep}+
8 |\Omega_-|,
	\end{align*}
	where we used \eqref{moduli} for the first inequality.
	Since $\|F\|^2_{H^{1/2-\gep}(\R^2)} = \|F\|^2_{L^{2}(\R^2)} + |F|^2_{H^{1/2-\gep}(\R^2)}$ and $\|F\|^2_{L^{2}(\R^2)} = |\Omega_-|$, we proved the claim with $C_\Gamma^2:=\frac{1}{2} C_1^2 + \frac{9}{4}|\Omega_-|$ for all $0<\gep<1/4$.
\end{proof}

We are now ready to present the proof of \cref{thm:converg}.

\begin{proof}[Proof of \cref{thm:converg}]
We split the analysis and estimation in three regions: $u_+$, $u_-$, and $u_\Gamma$; the last of the three is the neighborhood of the interface curve $\Gamma$.

\underline{Proving the $H^1(\Omega)$ convergence.}
Because the interface $\Gamma$ is of class $\mathscr{C}^2$ and $u_+ \in H^2(\Omega_+)$ by our assumption~\eqref{assumption:u}, we can extend the function $u_+$ from $\Omega_+$ to the domain $\Omega$ and obtain a function $v_+\in H^2(\Omega)\cap H_0^1(\Omega)$ such that
$v_+=u_+$ in $\Omega_+$ and $\|v_+\|_{H^2(\Omega)} \le C_0 \|u_+\|_{H^2(\Omega_+)}$.
Therefore, by \cref{thm:H2} with $m\ge 2$, there exists a positive constant $C_+$ such that
\be \label{v+}
\sum_{j=J}^\infty \sum_{\tilde{\alpha}_j \in \tilde{\Psi}^{2D}_j} 2^{2j} |\la v_+, \tilde{\alpha}_j\ra|^2
\le 2^{-2J} C_+ |v_+|^{2}_{H^{2}(\Omega)},
\qquad \mbox{for all } J\ge J_0.
\ee
If $\tilde{\alpha}_j \in \tilde{\Psi}^{2D}_j$ and $\SSupp(\tilde{\alpha}_j)$ is completely inside $\Omega_+$, due to $v_+=u_+$ on $\Omega_+$, we must have
\[
\la v_+, \tilde{\alpha}_j\ra=\la u_+, \tilde{\alpha}_j\ra=\la u, \tilde{\alpha}_j\ra,
\qquad \mbox{if}\quad
\SSupp(\tilde{\alpha}_j)\subseteq \Omega_+.
\]
Consequently, we conclude from \eqref{v+} and $\|v_+\|_{H^2(\Omega)} \le C_0 \|u_+\|_{H^2(\Omega_+)}$ that
\be \label{v+:u+}
\sum_{j=J}^\infty \sum_{\substack{\tilde{\alpha}_j \in \tilde{\Psi}^{2D}_j,\\ \SSupp(\tilde{\alpha}_j)\subseteq \Omega_+}} 2^{2j} |\la u, \tilde{\alpha}_j\ra|^2
\le
\sum_{j=J}^\infty \sum_{\tilde{\alpha}_j \in \tilde{\Psi}^{2D}_j} 2^{2j} |\la v_+, \tilde{\alpha}_j\ra|^2 \le
2^{-2J} C_1 |u_+|^{2}_{H^{2}(\Omega_+)},\quad \forall\, J\ge J_0,
\ee
where $C_1:=C_+C_0^2$.
Similarly, by assumption~\eqref{assumption:u}, $u_-$ in $\Omega_-$ can be extended into a function $v_-\in H^2(\Omega)\cap H_0^1(\Omega)$ such that $v_-=u_-$ in $\Omega_-$ and
$\|v_-\|_{H^2(\Omega)} \le C_0 \|u_-\|_{H^2(\Omega_-)}$.
Therefore, by \cref{thm:H2} with $m\ge 2$ and the same argument, there exists a positive constant $C_2$ such that for all $J\ge J_0$,
\be \label{v-:u-}
\sum_{j=J}^\infty \sum_{\substack{\tilde{\alpha}_j \in \tilde{\Psi}^{2D}_j, \\ \SSupp(\tilde{\alpha}_j)\subseteq \Omega_-}} 2^{2j} |\la u, \tilde{\alpha}_j\ra|^2
\le
\sum_{j=J}^\infty \sum_{\tilde{\alpha}_j \in \tilde{\Psi}^{2D}_j} 2^{2j} |\la v_-, \tilde{\alpha}_j\ra|^2 \le
2^{-2J} C_2 |u_-|^{2}_{H^{2}(\Omega_-)},\quad \forall\, J\ge J_0.
\ee

We now handle the solution $u$ in a neighborhood of the interface $\Gamma$.
Because the closed curve $\Gamma$ is completely inside $\Omega$, we can assume that there exist two open neighborhoods $\Omega_0$ and $\Omega_\rho$ of $\Gamma$ such that $\Omega_0\subseteq \Omega_\rho \subseteq \Omega$, the closure of $\Omega_\rho$ is contained inside $\Omega$ and the closure of $\Omega_0$ is inside $\Omega_\rho$.
Since $\Gamma$ is a curve, we can take a compactly supported smooth function $\rho$ supported inside $\Omega_\rho$ such that $\rho=1$ in $\Omega_0$.
Define a bivariate function
$w:=\rho u$. Obviously, $\SSupp(w)\subseteq \Omega_\rho$ and hence $w$ can be regarded as a function in the whole space $\R^2$ by the zero extension outside $\Omega$.
Therefore, applying \cref{thm:Htau} with $\tau_1=5/4$ and $\tau_2=7/4$, for any $\tau\in [\tau_1,\tau_2]$, there exists a positive constant $C_3$, independent of $\tau \in [\tau_1,\tau_2]$, such that
\be \label{eta:singular}
\sum_{\tilde{\eta}\in \{ \tilde{\phi}\otimes \tilde{\psi}, \tilde{\psi}\otimes \tilde{\phi}, \tilde{\psi}\otimes \tilde{\psi}\}}
\sum_{j=0}^\infty \sum_{k\in \Z^2}
2^{2\tau j} |\la w, \tilde{\eta}_{j;k}\ra|^2\le 2^{-3/2} C_3 \|w\|^2_{H^\tau(\R^2)},\quad \forall\, \tau\in [\tau_1, \tau_2] := [5/4,7/4].
\ee
It is important to notice that $\rho$ is supported inside $\Omega_\rho$
and $\rho=1$ in $\Omega_0$.
Take any element
\be \label{talpha:Gamma}
\tilde{\alpha}_j\in \tilde{\Psi}^{2D}_j\quad \mbox{and}\quad
\SSupp(\tilde{\alpha}_j)\cap \Gamma\ne \emptyset
\ee
for $j\ge J_0$.
Because $\Gamma$ is away from the boundary $\partial \Omega$ and because the support of $\tilde{\alpha}_j$ becomes smaller and smaller and closer to the interface $\Gamma$ for $j$ large enough, any element $\tilde{\alpha}_j$ in \eqref{talpha:Gamma} cannot be the boundary wavelets, i.e., we must have
$\tilde{\alpha}_j=\tilde{\eta}_{j;k}:=2^{j}\tilde{\eta}(2^j\cdot-k)$ for some $k\in \Z^2$ and
$\tilde{\eta}\in \{ \tilde{\phi}\otimes \tilde{\psi}, \tilde{\psi}\otimes \tilde{\phi}, \tilde{\psi}\otimes \tilde{\psi}\}$.
In addition, $\rho$ takes value $1$ in $\Omega_0$ and the support of $\tilde{\alpha}_j$ will be contained inside $\Omega_0$ for large enough $j$. In conclusion,
there must exist a positive integer $\mathring{J}$ such that any element $\tilde{\alpha}_j$ in \eqref{talpha:Gamma} with $j\ge \mathring{J}$ must satisfy
\[
\SSupp(\tilde{\alpha}_j)\subset \Omega_0\quad \mbox{and}\quad
\tilde{\alpha}_j =\tilde{\eta}_{j;k}
\quad \mbox{for some } k\in \Z^2,
\]
where $\tilde{\eta}\in \{ \tilde{\phi}\otimes \tilde{\psi}, \tilde{\psi}\otimes \tilde{\phi}, \tilde{\psi}\otimes \tilde{\psi}\}$. Moreover, because $\rho=1$ on $\Omega_0$ and $\SSupp(\tilde{\alpha}_j)\subset \Omega_0$,
we have
\[
\la w, \tilde{\alpha}_j\ra=\la \rho u, \tilde{\alpha}_j\ra=\la u, \tilde{\alpha}_j\ra=\la u, \tilde{\eta}_{j;k}\ra
\]
for some unique $k\in \Z^2$, where we used the definition $w=\rho u$.

For simplicity of discussion, without loss of any generality, we can assume $\mathring{J}=J_0$, because we are only interested in large $J$ for proving the convergence rate. Hence, for any $\tilde{\alpha}_j$ satisfying \eqref{talpha:Gamma} with $j\ge J_0$, the above discussion implies that for $\tau\in [\tau_1,\tau_2]:=[5/4,7/4]$ and $J\ge J_0$, we have
\begin{align*}
\sum_{j=J}^\infty \sum_{\substack{\tilde{\alpha}_j\in \tilde{\Psi}_j^{2D}, \\ \SSupp(\tilde{\alpha}_j)\cap \Gamma\ne \emptyset}} 2^{2\tau j} |\la u, \tilde{\alpha}_j\ra|^2
&=\sum_{j=J}^\infty \sum_{\substack{\tilde{\alpha}_j\in \tilde{\Psi}_j^{2D}, \\ \SSupp(\tilde{\alpha}_j)\cap \Gamma\ne \emptyset}} 2^{2\tau j} |\la w, \tilde{\alpha}_j\ra|^2
\le \sum_{\tilde{\eta}\in \{ \tilde{\phi}\otimes \tilde{\psi}, \tilde{\psi}\otimes \tilde{\phi}, \tilde{\psi}\otimes \tilde{\psi}\}}
\sum_{j=0}^\infty \sum_{k\in \Z^2}
2^{2\tau j} |\la w, \tilde{\eta}_{j;k}\ra|^2.
\end{align*}
Now we conclude from the inequality \eqref{eta:singular} and the above estimation that
\be \label{singular}
\sum_{j=J}^\infty \sum_{\substack{\tilde{\alpha}_j\in \tilde{\Psi}_j^{2D}, \\ \SSupp(\tilde{\alpha}_j)\cap \Gamma\ne \emptyset}} 2^{2\tau j} |\la u, \tilde{\alpha}_j\ra|^2
\le 2^{-3/2} C_3 \|w\|_{H^\tau(\Omega)}^2,\quad \text{for all } J\ge J_0, \tau\in [\tau_1,\tau_2].
\ee
In particular, for $\tau\in [\tau_1,\tau_2]$, we have
\begin{align*}
& \sum_{j=2J-1}^\infty \sum_{\substack{\tilde{\alpha}_j\in \tilde{\Psi}_j^{2D}, \\ \SSupp(\tilde{\alpha}_j)\cap \Gamma\ne \emptyset}} 2^{2j}  |\la u, \tilde{\alpha}_j\ra|^2
=\sum_{j=2J-1}^\infty \sum_{\substack{\tilde{\alpha}_j\in \tilde{\Psi}_j^{2D}, \\ \SSupp(\tilde{\alpha}_j)\cap \Gamma\ne \emptyset}} 2^{-2(\tau-1)j} 2^{2\tau j}  |\la u, \tilde{\alpha}_j\ra|^2 \\
& \quad \le 2^{3/2} 2^{-4J (\tau-1)}
\sum_{j=2J-1}^\infty \sum_{\substack{\tilde{\alpha}_j\in \tilde{\Psi}_j^{2D}, \\ \SSupp(\tilde{\alpha}_j)\cap \Gamma\ne \emptyset}} 2^{2\tau j} |\la u, \tilde{\alpha}_j\ra|^2
\le 2^{-4J(\tau-1)} C_3 \|w\|_{H^{\tau}(\Omega)}^2,
\end{align*}
where we used $2^{2(\tau-1)}\le 2^{2(\tau_2-1)}\le 2^{3/2}$ due to $\tau_2=7/4$.
Consider $\tau:=3/2-2\gep\in [5/4,3/2)$ with $0<\gep<1/8$. Then obviously, $\tau\in [\tau_1,\tau_2]:=[5/4,7/4]$.
Since $2(\tau-1)=2(1/2-2\gep)=1-4\gep$, the above estimate can be equivalently re-expressed as follows:
\be \label{v2J}
\sum_{j=2J-1}^\infty
\sum_{\substack{\tilde{\alpha}_j\in \tilde{\Psi}_j^{2D}, \\ \SSupp(\tilde{\alpha}_j)\cap \Gamma\ne \emptyset}} 2^{2j }  |\la u, \tilde{\alpha}_j\ra|^2
\le
2^{-2J(1-4\gep)}  C_3 \|w\|_{H^{3/2-2\gep}(\Omega)}^2, \quad \text{for all } 0<\gep<1/8.
\ee

In the following, we estimate the quantity $\|w\|_{H^{3/2-2\gep}(\Omega)}$ for $0<\gep<1/8$, specifically for $\gep\to 0^+$.
Define $w_+:=w\chi_{\Omega_+}=\rho u_+$ and $w_-:=w\chi_{\Omega_-}=\rho u_-$. Then $w_+\in H^2(\Omega_+)$ and $w_-\in H^2(\Omega_-)$. Moreover, $\rho v_+$ and $\rho v_-$ are extensions of $w_+$ and $w_-$, respectively.
Because $w=\rho u \in H_0^1(\Omega)$, we consider $\nabla w$.
To estimate $\|w\|_{H^{3/2-2\gep}(\Omega)}$ for $0<\gep<1/8$, it suffices to estimate $\|\nabla w\|_{H^{1/2-2\gep}(\Omega)}$.
For simplicity of discussion, we only handle $\frac{\partial}{\partial x} w$ and we assume that $\Omega_-$ is inside $\Omega$ and $\partial \Omega_-\cap \partial \Omega=\emptyset$.
Because $w=\rho u \in H_0^1(\Omega)$, we have $\frac{\partial}{\partial x} w\in \LpO{2}$. Noting that $\rho v_+\in H^2(\R^2)$, we can rewrite
\[
w_x:=\frac{\partial}{\partial x} w=
\frac{\partial}{\partial x} [\rho v_+] \chi_{\Omega_+}
+\frac{\partial}{\partial x} [\rho v_-] \chi_{\Omega_-}
=
\frac{\partial}{\partial x} [\rho v_+]+F\chi_{\Omega_-}\quad \mbox{with}\quad
F:=\frac{\partial}{\partial x} [\rho v_-]-\frac{\partial}{\partial x} [\rho v_+],
\]
because $w=\rho u_+=\rho v_+$ in $\Omega_+$ and $w=\rho u_-=\rho v_-$ in $\Omega_-$. Note that $F\in H^1(\R^2)$ and $F$ has compact support by $v_+, v_-\in H^2(\Omega)\cap H^1_0(\Omega)$. Consequently, there exists a positive constant $C_4$ such that
\be \label{est:F}
\|F\|_{H^1(\R^2)} \le C_4 (\|u_+\|_{H^2(\Omega_+)}+\|u_-\|_{H^2(\Omega_-)}),
\ee
where $C_4$ only depends on the smooth function $\rho$ and the positive constant $C_0$ appearing in $\|v_+\|_{H^2(\Omega)}\le C_0 \|u_+\|_{H^2(\Omega_+)}$ and $\|v_-\|_{H^2(\Omega)}\le C_0 \|u_-\|_{H^2(\Omega_-)}$.
We still consider $\tau:=3/2-2\gep$ with $0<\gep<1/8$. Then $\tau_\gep:=\tau-1=1/2-2\gep \in [1/4, 1/2)$.
Hence, by $w_x=\frac{\partial}{\partial x} [\rho v_+]+F\chi_{\Omega_-}$, we have
\be \label{wx:htaueps}
\|w_x\|_{H^{\tau_\gep}(\R^2)}
\le \|[\rho v_+]_x\|_{H^{\tau_\gep}(\R^2)}
+\|F\chi_{\Omega_-}\|_{H^{\tau_\gep}(\R^2)}
\le C_\rho C_0 \|u_+\|_{H^2(\Omega_+)}+\|F\chi_{\Omega_-}\|_{H^{\tau_\gep}(\R^2)},
\ee
where 
we used  $\|[\rho v_+]_x\|_{H^{\tau_\gep}(\R^2)} = \|[\rho v_+]_x\|_{H^{\tau_\gep}(\Omega)} \le C_\rho \|v_+\|_{H^2(\Omega)}$ with the positive constant $C_\rho$ depending only on $\rho$, and the inequality $\|v_+\|_{H^2(\Omega)} \le C_0 \|u_+\|_{H^2(\Omega_+)}$.

Next, we estimate $\|F\chi_{\Omega_-}\|_{H^{\tau_\gep}(\R^2)}$.
Since $F\in H^1(\R^2)$, by \cite[Theorems~C.9 and C.10]{bsbook07} with $r=\tau_\gep, s=\tau_\gep+\gep, t=1$ and $d=2$,
there exists a positive constant $C_5$ only depending on $\Omega_-$ such that
\be \label{fxomega:htaueps}
\|F\chi_{\Omega_-}\|_{H^{\tau_\gep}(\R^2)}
\le C_5\gep^{-1/2} \|F\|_{H^1(\R^2)} \|\chi_{\Omega_-}\|_{H^{\tau_\gep+\gep}(\R^2)},
\ee
where the above factor $\gep^{-1/2}$ is from
$\sum_{p=0}^\infty 2^{-2p (s+t-d/2-r)}=\sum_{p=0}^\infty 2^{-2p\gep}=\frac{1}{1-2^{-2\gep}}\le \frac{1}{\gep \sqrt{2} \ln 2}$ for all $0<\gep<1/8$ in \cite[Proof of Theorem~C.10]{bsbook07} by noting $s+t-d/2-r=\gep>0$.
Noting $\tau_\gep+\gep=\tau-1+\gep=1/2-\gep$, we obtain from \eqref{est:chi} in \cref{lem:chi} that $\|\chi_{\Omega}\|_{H^{\tau_\gep+\gep}(\R^2)}\le C_\Gamma \gep^{-1/2}$.
Combining \eqref{est:F}, \eqref{wx:htaueps} and \eqref{fxomega:htaueps}, we obtain
\[
\|w_x\|_{H^{\tau_\gep}(\R^2)}
\le C_\rho C_0 \|u_+\|_{H^2(\Omega_+)}+
C_4 C_5 C_\Gamma (\|u_+\|_{H^2(\Omega_+)}+\|u_-\|_{H^2(\Omega_-)})
\gep^{-1}.
\]
An estimate for $\|w_y\|_{H^{\tau_\gep}(\Omega)}$ can be proved similarly.
Note that $\|w\|_{H^{3/2-2\gep}(\Omega)}^2=\|w\|_{H^1(\Omega)}^2+\|\nabla w\|_{H^{\tau_\gep}(\Omega)}^2$ by $\tau_\gep=\tau-1=1/2-2\gep$.
Noting that $w=\rho u\in H^1_0(\Omega)$ and
\[
\|w\|_{H^1(\Omega)}^2=\|\rho u_+\|_{H^1(\Omega_+)}^2+\|\rho u_-\|_{H^1(\Omega_-)}^2
\le C_\rho^2 [\|u_+\|^2_{H^2(\Omega_+)}+\|u_-\|^2_{H^2(\Omega_-)}],
\]
we conclude from the above inequality estimating $\|w_x\|_{H^{\tau_\eps}(\R^2)}$ and similarly $\|w_y\|_{H^{\tau_\eps}(\R^2)}$ that
\be \label{est:w}
\|w\|_{H^{3/2-2\gep}(\Omega)}^2 \le C_6 \gep^{-2} (\|u_+\|_{H^2(\Omega_+)}^2+\|u_-\|_{H^2(\Omega_-)}^2),
\qquad \forall\; 0<\gep<1/8,
\ee
where $C_6:=C_\rho^2+4(C_\rho C_0+C_4C_5C_\Gamma)^2+4 C_4^2 C_5^2C_\Gamma^2<\infty$.
Since $\tau=3/2 - 2\gep$ for $0<\gep<1/8$, we deduce from \eqref{v2J} and \eqref{est:w} that
$\tau-1=1/2-2\gep$, $-2J(1-4\gep)=-4J(\tau-1)$ and
\be \label{w2J:2}
\sum_{j=2J-1}^\infty \sum_{\substack{\tilde{\alpha}_j\in \tilde{\Psi}_j^{2D}, \\ \SSupp(\tilde{\alpha}_j)\cap \Gamma\ne \emptyset}} 2^{2j }  |\la u, \tilde{\alpha}_j\ra|^2
\le
2^{-4J(\tau-1)}  C_3 \|w\|_{H^{\tau}(\R^2)}^2
\le
C_3 C_6 2^{-2J} (H(\gep))^{-2} (\|u_+\|_{H^2(\Omega_+)}^2
 + \|u_-\|_{H^2(\Omega_-)}^2),
\ee
where $H(\gep):=\gep 2^{-4J\gep}$, which can be written as $H(\gep)=\gep h^{4\gep}$ with $h:=2^{-J}$.
Note that
\[
H'(\gep)=h^{4\gep}+4 \gep h^{4\gep} \log (h) = h^{4\gep} (1+4\gep \log(h)),
\]
where $\log$ is the natural logarithm. Setting $H'(\gep)=0$ gives $\gep=\frac{1}{4 \log(h^{-1})}>0$, i.e., $\gep=\frac{1}{4} |\log (h)|^{-1}$ because $0<h<1$.
Taking $\gep=\frac{1}{4}|\log(h)|^{-1}$ in
\eqref{w2J:2}, we conclude that $H(\gep)=\frac{1}{4}|\log (h)|^{-1} e^{-1}$, $(H(\gep))^{-2}=16 e^2 |\log (h)|^2$, and finally
we deduce from \eqref{w2J:2} that
\be \label{w2J:0}
\sum_{j=2J-1}^\infty
\sum_{\substack{\tilde{\alpha}_j\in \tilde{\Psi}_j^{2D}, \\ \SSupp(\tilde{\alpha}_j)\cap \Gamma\ne \emptyset}} 2^{2j }  |\la u, \tilde{\alpha}_j\ra|^2
\le
C_v |\log(h)|^2 2^{-2J} (\|u_+\|_{H^2(\Omega_+)}^2 + \|u_-\|_{H^2(\Omega_-)}^2),
\ee
where $C_v:=16 e^2 C_3 C_6 <\infty$.

Since $u\in H^1_0(\Omega)$, we have the following wavelet expansion
\begin{align*}
u
 = \sum_{\alpha\in \Phi^{2D}_{J_0}} \la u, \tilde{\alpha}\ra \alpha + \sum_{j=J_0}^\infty \sum_{\alpha_j\in \Psi^{2D}_j} \la u, \tilde{\alpha}_j\ra \alpha_j
 = \sum_{\alpha\in \Phi^{2D}_{J_0}} \la u, [2^{J_0} \tilde{\alpha}]\ra 2^{-J_0} \alpha + \sum_{j=J_0}^\infty \sum_{\alpha_j\in \Psi^{2D}_j} \la u, [2^j \tilde{\alpha}_j] \ra 2^{-j} \alpha_j.
\end{align*}
We define
\begin{align*}
I_1 & :=\sum_{j=J}^\infty \sum_{\substack{\alpha_j\in \Psi^{2D}_j,\\ \SSupp(\tilde{\alpha}_j)\subseteq \Omega_+}} \la u, [2^j \tilde{\alpha}_j]\ra [2^{-j} \alpha_j],
\qquad
I_2:=\sum_{j=J}^\infty \sum_{\substack{\alpha_j\in \Psi^{2D}_j,\\ \SSupp(\tilde{\alpha}_j)\subseteq \Omega_-}} \la u, [2^j \tilde{\alpha}_j]\ra [2^{-j}\alpha_j],\\
I_3 & :=
\sum_{j=2J-1}^\infty \sum_{\substack{\alpha_j\in \Psi_j,\\ \SSupp(\tilde{\alpha}_j)\cap \Gamma\ne \emptyset}} \la u, [2^j\tilde{\alpha}_j]\ra [2^{-j} \alpha_j], \quad \text{and} \quad
\mathring{u}_h :=
\sum_{\alpha\in \mathcal{B}_{J_0}^{2D}} \la u, \tilde{\alpha}\ra \alpha-I_1-I_2-I_3.
\end{align*}
%
Recall that $V_h^{wav}:=\mbox{span}(\mathcal{B}_{J_0,J}^{S,H^1(\Omega)})$.
Then obviously,
\[
u-\mathring{u}_J=I_1 + I_2 + I_3
\quad \mbox{and}\quad
\mathring{u}_h\in V_h^{wav}.
\]
Because $\mathcal{B}_{J_0}^{S,H^1(\Omega)}$ is a Riesz basis of $H^1_0(\Omega)$, we deduce from $u-\mathring{u}_h=I_1 + I_2 +I_3$ that there must exist a positive constant $C_7$, only depending on the wavelet basis $\mathcal{B}_{J_0}^{S,H^1(\Omega)}$,  such that
\begin{align*}
& \|u-\mathring{u}_h\|^2_{H^1(\Omega)} = \|I_1 + I_2 +I_3 \|^2_{H^1(\Omega)}\\
&\le C_7 \left( \sum_{j=J}^\infty \sum_{\substack{\alpha_j\in \Psi^{2D}_j,\\ \SSupp(\tilde{\alpha}_j)\subseteq \Omega_+}} |\la u, [2^j \tilde{\alpha}_j]\ra|^2 +
\sum_{j=J}^\infty \sum_{\substack{\alpha_j\in \Psi^{2D}_j,\\ \SSupp(\tilde{\alpha}_j)\subseteq \Omega_-}} |\la u, [2^j \tilde{\alpha}_j]\ra|^2 +\sum_{j=2J-1}^\infty
\sum_{\substack{\alpha_j\in \Psi^{2D}_j,\\ \SSupp(\tilde{\alpha}_j)\cap \Gamma\ne \emptyset}} |\la u, [2^j\tilde{\alpha}_j]\ra|^2\right)\\
& = C_7 \left( \sum_{j=J}^\infty \sum_{\substack{\alpha_j\in \Psi^{2D}_j, \\ \SSupp(\tilde{\alpha}_j)\subseteq \Omega_+}} 2^{2j} |\la u, \tilde{\alpha}_j\ra|^2 +
\sum_{j=J}^\infty \sum_{\substack{\alpha_j\in \Psi^{2D}_j,\\ \SSupp(\tilde{\alpha}_j)\subseteq \Omega_-}} 2^{2j} |\la u, \tilde{\alpha}_j\ra|^2 + \sum_{j=2J-1}^\infty
\sum_{\substack{\alpha_j\in \Psi^{2D}_j,\\ \SSupp(\tilde{\alpha}_j)\cap \Gamma\ne \emptyset}} 2^{2j} |\la u, \tilde{\alpha}_j\ra|^2\right).
\end{align*}
By \eqref{v+:u+}, \eqref{v-:u-}, and \eqref{w2J:0},
we have
\[
\begin{aligned}
\|u-\mathring{u}_h\|^2_{H^1(\Omega)}
& \le C_7
[\|u_+\|_{H^2(\Omega_+)}^2+
\|u_-\|_{H^2(\Omega_-)}^2]
(2^{-2J}C_1 + 2^{-2J} C_2 + 2^{-2J} |\log (h)|^2 C_v)\\
&\le C^2_8 \left[\|u_+\|_{H^2(\Omega_+)}^2+
\|u_-\|_{H^2(\Omega_-)}^2 \right]  2^{-2J} |\log (h)|^2 \\
& =
C_8^2 \left[\|u_+\|_{H^2(\Omega_+)}^2+
\|u_-\|_{H^2(\Omega_-)}^2\right] (h |\log (h)|)^2,
\end{aligned}
\]
where $C_8^2 := C_7(C_1 + C_2 + C_v)<\infty$. This proves
\be \label{diff:uuJ}
\|u-\mathring{u}_h\|_{H^1(\Omega)}
\le C_8  h |\log(h)| \sqrt{\|u_+\|_{H^2(\Omega_+)}^2+
\|u_-\|_{H^2(\Omega_-)}^2}.
\ee
By Cea's lemma, there exists a positive constant $C_a$, only depends on the diffusion coefficient $a$ and $\Omega$, such that
\[
\|u-u_h\|_{H^1(\Omega)}\le C_a \inf_{v\in V_h^{wav}}\|
u-v\|_{H^1(\Omega)}.
\]
Because $\mathring{u}_h\in V_h^{wav}$,
we conclude that
\[
\|u-u_h\|_{H^1(\Omega)}\le C_a \inf_{v\in V_h^{wav}}\| u-v\|_{H^1(\Omega)}
\le C_a \|u-\mathring{u}_h\|_{H^1(\Omega)}
\]
and consequently,
\be \label{est:u:H1}
\|u-u_h\|^2_{H^1(\Omega)} \le C_9
h^2 |\log(h)|^2
(\|u_+\|_{H^2(\Omega_+)}^2 + \|u_-\|_{H^2(\Omega_-)}^2),
\ee
where $C_9:=C_8^2 C_a^2$. This proves the first inequality in \eqref{converg:H1} for convergence in $H_0^1(\Omega)$. Because $N_J=\bo(h^{-2})$, the second inequality in \eqref{converg:H1} follows.

\underline{Proving the $\LpO{2}$ convergence.} We now use the Aubin-Nitsche's technique to prove \eqref{converg:L2} for $\LpO{2}$ convergence. Note that the bilinear form $\blf(u,v):=\la a \nabla u, \nabla v\ra$ defined in \eqref{weak} is symmetric.
Suppose that $w\in H_0^1(\Omega)$ satisfies
\be \label{w:sol}
\blf(w,v)=\la u-u_h, v\ra,\qquad v\in H_0^1(\Omega),
\ee
%
%
and its wavelet approximate solution $w_h\in V_h^{wav}$ satisfies
\[
\blf(w_h,v_h)=\la u-u_h,v_h\ra,\qquad v_h\in V_h^{wav}.
\]
%
By the same proof of the inequality of \eqref{est:u:H1}, we have
\[
\|\nabla (w-w_h)\|^2_{\LpO{2}}
\le C_9 h^2 |\log(h)|^2
(\|w_+\|_{H^2(\Omega_+)}^2+\|w_-\|_{H^2(\Omega_-)}^2)
\]
for some positive constant $C_9$.
Because $g=0$ and $g_\Gamma=0$ in the weak formulation \eqref{w:sol},
\cite{LUbook} (also see 
\cite[Theorem~2.1]{CZ98}) guarantees the existence of a positive constant $C_{10}$ such that
\[
\|w_+\|_{H^2(\Omega_+)}^2+\|w_-\|_{H^2(\Omega_-)}^2
\le C_{10} \|u-u_h\|^2_{\LpO{2}},
\]
where $u-u_h$ is treated as the source term for the solution $w$ in \eqref{w:sol}. Therefore,
\be \label{wh}
\|\nabla (w-w_h)\|^2_{\LpO{2}}
\le C_9 h^2 |\log(h)|^2
(\|w_+\|_{H^2(\Omega_+)}^2
+\|w_-\|_{H^2(\Omega_-)}^2)
\le C_9 C_{10} h^2 |\log (h)|^2 \|u-u_h\|^2_{\LpO{2}}.
\ee
Since $v=u-u_h\in H^1_0(\Omega)$,
we deduce from $\blf(w,v)=\la u-u_h, v\ra$  that
\[
\|u-u_h\|^2_{\LpO{2}}
=\blf(w, u-u_h)
=\blf(w-w_h,u-u_h),
\]
where we used the Galerkin orthogonality $\blf(w_h, u-u_h)=\blf(u-u_h, w_h)=0$ for $w_h\in V_h^{wav}$. Consequently, we deduce from \eqref{est:u:H1} and \eqref{wh} that
\begin{align*}
\|u-u_h\|^2_{\LpO{2}}
&=\blf(w-w_h,u-u_h)
\le C_{11} \|\nabla (w-w_h)\|_{\LpO{2}} \|\nabla (u-u_h)\|_{\LpO{2}}\\
&\le h |\log(h)| C_{11} \sqrt{C_9 C_{10}}
 \|u-u_h\|_{\LpO{2}}
\sqrt{C_9} h |\log(h)|
(\|u_+\|_{H^2(\Omega_+)}^2 +\|u_-\|_{H^2(\Omega_-)}^2)^{1/2}\\
&= C h^2 |\log(h)|^2 (\|u_+\|_{H^2(\Omega_+)}^2
+\|u_-\|_{H^2(\Omega_-)}^2)^{1/2}  \|u-u_h\|_{\LpO{2}},
\end{align*}
where $C:=C_{11} C_9 \sqrt{C_{10}}$, from which we conclude that the first inequality of \eqref{converg:L2} holds, i.e.,
\[
\|u-u_h\|_{\LpO{2}}
\le C h^2 |\log(h)|^2 (\|u_+\|_{H^2(\Omega_+)}^2
+\|u_-\|_{H^2(\Omega_-)}^2)^{1/2}.
\]
The second inequality of \eqref{converg:L2} follows trivially by noting $N_J=\bo(h^{-2})$.

\underline{Proving that the condition number is uniformly bounded.} Take $v_h \in V^{wav}_h$. Then, $v_h = \sum_{\eta \in \mathcal{B}^{S,H^1_0(\Omega)}_{J_0,J}} c_{\eta} \eta$. We want to find an upper bound for $\blf(v_h,v_h)$. Note that
\[
\blf(v_h,v_h) \le \|a\|_{L^\infty(\Omega)} \| \nabla v_h \|_{\LpO{2}}^2 \le \|a\|_{L^\infty(\Omega)} \left( \| v_h \|_{\LpO{2}}^2 + \| \nabla v_h \|_{\LpO{2}}^2 \right) \le C_{\mathcal{B},2} \|a\|_{L^\infty(\Omega)} \sum_{\eta \in \mathcal{B}^{S,H_1(\Omega)}_{J_0,J}} |c_\eta|^2,
\]
where we used the fact that $\mathcal{B}^{H_1(\Omega)}_{J_0}$ is a Riesz basis of the Sobolev space $H^{1}_0(\Omega)$ to arrive at the final inequality.  Since $v_h$ satisfies the zero Dirichlet boundary condition, by the Poincar\'e inequality, we have $\| v_h \|_{\LpO{2}} \le C_P \| \nabla v_h \|_{\LpO{2}}$ with $C_P$ being a positive constant that depends only on $\Omega$, which implies that
\[
\|v_h\|_{\LpO{2}}^2 + \|\nabla v_h\|_{\LpO{2}}^2 \le (1+C_P^2) \|\nabla v_h\|_{\LpO{2}}^2.
\]
Moreover, we have
\begin{align*}
	\blf(v_h,v_h) & \ge \|a^{-1}\|_{L^\infty(\Omega)}^{-1} \| \nabla v_h \|_{\LpO{2}}^2 \ge \|a^{-1}\|_{L^\infty(\Omega)}^{-1} (1+C_P^2)^{-1} (\|v_h\|_{\LpO{2}}^2 + \|\nabla v_h\|_{\LpO{2}}^2)\\
	& \ge C_{\mathcal{B},1} \|a^{-1}\|_{L^\infty(\Omega)}^{-1} (1+C_P^2)^{-1} \sum_{\eta \in \mathcal{B}^{S,H^1_0(\Omega)}_{J_0,J}} |c_\eta|^2,
\end{align*}
where we used the fact that $\mathcal{B}^{H^1_0(\Omega)}_{J_0}$ is a Riesz basis of the Sobolev space $H^{1}_0(\Omega)$ to arrive at the final inequality. Combining the lower and upper bounds of $\blf(v_h,v_h)$, we have
\[
C_{\mathcal{B},1} \|a^{-1}\|_{L^\infty(\Omega)}^{-1} (1+C_P^2)^{-1} \sum_{\eta \in \mathcal{B}^{S,H^1_0(\Omega)}_{J_0,J}} |c_\eta|^2
\le
\blf(v_h,v_h)
\le
C_{\mathcal{B},2} \|a\|_{L^\infty(\Omega)} \sum_{\eta \in \mathcal{B}^{S,H^1_0(\Omega)}_{J_0,J}} |c_\eta|^2,
\]
which gives an upper bound of the condition number in the form of $C_w \|a\|_{L^\infty(\Omega)} \|a^{-1}\|_{L^\infty(\Omega)}$, where $C_w := (1+C_P^2) C_{\mathcal{B},2} C_{\mathcal{B},1}^{-1}<\infty$.
\end{proof}

\underline{Justification for \cref{remark:quad}.} Consider the setting of \cref{thm:converg}. Let $u$ be the true solution of \eqref{model}. Also, let $u_h =u_J:=\sum_{\eta \in \mathcal{B}^{S,H^1_0(\Omega)}_{J_0,J}} c_{\eta} \eta$ be the solution obtained from solving \eqref{auJu} with exact integral computations. That is, the coefficient vector $\vec{c}_{J}:=\{c_{\eta}\}_{\eta \in \mathcal{B}^{S,H^1_0(\Omega)}_{J_0,J}}$ is obtained from solving $A_J \vec{c}_{J} = \vec{f}_J$, where the matrix $A_J$ and the vector $\vec{f}_J$ correspond to the left-hand and right-hand sides of \eqref{auJu}. Similarly, define  $u_h^{Q} =u_J^{Q} :=\sum_{\eta \in \mathcal{B}^{S,H^1_0(\Omega)}_{J_0,J}} c_{\eta}^{Q} \eta$, where the coefficient vector $\vec{c}^{Q}_{J}:=\{c^{Q}_{\eta}\}_{\eta \in \mathcal{B}^{S,H^1_0(\Omega)}_{J_0,J}}$ is obtained from solving $(A_J + E_J)\vec{c}^{Q}_J = \vec{f}_{J}+\vec{r}_J$. Here, $E_J$ and $\vec{r}_J$ are the error matrix and error vector coming from the interface curve approximation and the numerical quadrature used to compute $A_J$ and 
$\vec{f}_J$ respectively. In what follows, let $\|\cdot\|_2$ denote the Euclidean norm for vectors and the induced matrix 2-norm for matrices. Assume that the error matrix satisfies $\|E_J\|_2 < \theta \alpha C_{\mathcal{B},1}$, where $0< \theta <1$, $\alpha$ is the coercivity constant of the bilinear form, and $C_{\mathcal{B},1}$ is the lower bound Riesz basis constant of $\mathcal{B}^{S,H^1_0(\Omega)}_{J_0}$ in \eqref{stability}. Subtracting these two linear systems, we have 
\[
(A_J + E_J)(\vec{c}^{Q}_J - \vec{c}_J) = \vec{r}_J - E_J \vec{c}_J.
\]
By the Riesz basis property, we have $\|u_J^{Q} - u_J\|_{H^{1}(\Omega)} \le C_{\mathcal{B},2}^{1/2} \|\vec{c}_J^{Q} - \vec{c}_J\|_2$, where $C_{\mathcal{B},2}$ is the upper bound Riesz basis constant of $\mathcal{B}^{S,H^1_0(\Omega)}_{J_0}$ in \eqref{stability}. Furthermore, we have
\[
\|u_J^{Q} - u_J\|_{H^{1}(\Omega)} \le C_{\mathcal{B},2}^{1/2}\| (A_J + E_J)^{-1} \|_2 (\|\vec{r}_J\|_2 + \|E_J\|_2 \|\vec{c}_J\|_2).
\]
Let us find an upper bound for $\|A_J^{-1}\|_2$. Let $v_J:=\sum_{\eta \in \mathcal{B}^{S,H^1_0(\Omega)}_{J_0,J}} z_{\eta} \eta$, where $z_{\eta} \in \R$ for each $\eta$. Define the coefficient vector $\vec{z} := \{z_{\eta}\}_{\eta \in \mathcal{B}^{S,H^1_0(\Omega)}_{J_0,J}}$. Then, 
\[
\vec{z}^\tp A_J \vec{z} = B(v_J,v_J) \ge \alpha \|v_J\|^{2}_{H^{1}(\Omega)} \ge \alpha C_{\mathcal{B},1} \|\vec{z}\|_{2}^2,
\]
where we have used the coercivity constant and the lower bound Riesz basis constant of $\mathcal{B}^{S,H^1_0(\Omega)}_{J_0}$ in \eqref{stability}. Define $M:=\#\mathcal{B}^{S,H^1_0(\Omega)}_{J_0,J}$. Since $A_J$ is a symmetric positive definite matrix, we have 
\[
\alpha C_{\mathcal{B},1} \le \min_{\vec{z} \in \R^{M} \backslash\{\vec{0}\}} \frac{\vec{z}^\tp A_j \vec{z}}{\|\vec{z}\|_{2}^2} = \lambda_{\min}(A_J) = \|A_J^{-1}\|_{2}^{-1} 
\implies
\|A_J^{-1}\|_{2} \le (\alpha C_{\mathcal{B},1})^{-1}. 
\]
We are now ready to analyze the $H^{1}(\Omega)$-norm of the difference between $u$ and $u^{Q}_J$. By calculation,
\begin{align*}
	\|u - u_J^{Q}\|_{H^{1}(\Omega)} & \le \|u - u_J\|_{H^{1}(\Omega)} + \|u_J - u_J^{Q}\|_{H^{1}(\Omega)}\\
	& \le \|u - u_J\|_{H^{1}(\Omega)} + C_{\mathcal{B},2}^{1/2} \|A_J^{-1}\|_2 \| (I + A_J^{-1} E_J)^{-1} \|_2 (\|\vec{r}_J\|_2 + \|E_J\|_2 \|\vec{c}_J\|_2) \\
	& \le \|u - u_J\|_{H^{1}(\Omega)} + C_{\mathcal{B},2}^{1/2} \frac{\|A_J^{-1}\|_2}{1 - \|A_J^{-1}\|_2 \|E_J\|_2} (\|\vec{r}_J\|_2 + \|E_J\|_2 \|\vec{c}_J\|_2)\\
	& \le C h |\log(h)| + \frac{C_{\mathcal{B},2}^{1/2}}{\alpha C_{\mathcal{B},1} (1-\theta) } (\|\vec{r}_J\|_2 + \|E_J\|_2 \|\vec{c}_J\|_2),
\end{align*}
where we have used the condition $\|A_J^{-1}\|_2 \|E_J\|_2 <1$, the Neumann-series bound, and the $H^{1}(\Omega)$-error estimate in \eqref{converg:H1}. 
Therefore, if the numerical quadrature and interface curve approximation are accurate enough so that
\be \label{quad:err}
\|\vec{r}_J\|_2 + \|E_J\|_2 \|\vec{c}_J\|_2 \le C_E h |\log(h)|
\ee
for some $C_E >0$ that is independent of the scale level $J$, it follows that 
\[
\|u - u_h^{Q}\|_{H^{1}(\Omega)} = \|u - u_J^{Q}\|_{H^{1}(\Omega)} \le \tilde{C}  h |\log(h)|, 
\quad \tilde{C}:= C + \frac{C_{\mathcal{B},2}^{1/2} C_E}{\alpha C_{\mathcal{B},1} (1-\theta) }, 
\]
where $\tilde{C}$ is also independent of the scale level $J$. A similar result holds for the $L^2(\Omega)$ error. 

As discussed in the paragraph following \cref{remark:quad}, the assumption in \eqref{quad:err}  can be satisfied by combining the refinability of the wavelet basis, with a high-order approximation of the interface curve, and a sufficiently high-degree Gaussian quadrature rule. To control $\|E_J\|_2$, we 
recall that $\|E_J\|_2 \le \sqrt{\|E_J\|_1 \|E_J\|_{\infty}}$, where 
\[
\|E_J\|_{\infty} := \max_{i} \sum_{k} |(E_J)_{i,k}|, \quad \|E_J\|_{1} := \max_{k} \sum_{i} |(E_J)_{i,k}|,
\]  
and $(E_J)_{i,k}$ is the $(i,k)$-th entry of the matrix $E_J$. Therefore, it is enough to control the errors of the individual Galerkin integrals.

\end{document}